\newtheorem{theorem}{Theorem}[section]
\newtheorem{proposition}{Proposition}[section]
\newtheorem{lemma}{Lemma}[section]
\theoremstyle{definition}
\newtheorem*{definition}{Definition}{}
\theoremstyle{remark} 
\newtheorem{remark}{Remark}[section]
\newcommand{\avs}{\langle s \rangle}
\newcommand{\htw}{\mathfrak{h}_2}
\newcommand{\hon}{\mathfrak{h}_1}
\newcommand{\h}{\mathfrak{h}}
\newcommand{\pbp}{\partial \bar{\partial}}
\numberwithin{equation}{section}
\begin{document}
\title[A vector bundle version of the Monge-Amp\`ere equation]{A vector bundle version of the Monge-Amp\`ere equation}
\author{Vamsi Pritham Pingali}
\address{Department of Mathematics, Indian Institute of Science, Bangalore, India - 560012}
\email{vamsipingali@iisc.ac.in}
\begin{abstract} 
We introduce a vector bundle version of the complex Monge-Amp\`ere equation motivated by a desire to study stability conditions involving higher Chern forms. We then restrict ourselves to complex surfaces, provide a moment map interpretation of it, and define a positivity condition (MA-positivity) which is necessary for the infinite-dimensional symplectic form to be K\"ahler.
On rank-2 bundles on compact complex surfaces, we prove two consequences of the existence of a ``positively curved" solution to this equation - Stability (involving the second Chern character) and a  Kobayashi-L\"ubke-Bogomolov-Miyaoka-Yau type inequality. Finally, we prove a Kobayashi-Hitchin correspondence for a dimensional reduction of the aforementioned equation.
\end{abstract}
\maketitle
\section{Introduction}\label{Introsec}
\indent The Kobayashi-L\"ubke-Bogomolov-Miyaoka-Yau (KLBMY) inequality for Mumford stable bundles on K\"ahler manifolds (and its cousin the Bogomolov-Miyaoka-Yau inequality for anti-Fano K\"ahler manifolds)  $$(r-1)c_1^2 [\omega]^{n-2}-2rc_2 [\omega]^{n-2} \leq 0$$
has many complex-geometric applications (see \cite{yaupnas, koblub} for instance). It is natural to ask whether we can produce similar inequalities for higher Chern classes conditioned on more complicated stability conditions. Alternatively, one can attempt to discover Partial Differential Equations whose solvability implies such inequalities. As far as we know, the only result in this direction is due to Collins-Xie-Yau \cite{collinsxieyau}. 
\begin{theorem}[Collins-Xie-Yau]
Let $(M,\omega)$ be a compact K\"ahler 3-manifold and $L$ is a holomorphic line bundle over it. Consider the deformed Hermitian-Yang-Mills (dHYM) equation for a metric $h$ on $L$ whose curvature is $F$ 
\begin{gather}
\mathrm{Arg}\left ( \frac{(\omega-F)^3}{\omega^3} \right ) = \hat{\theta},
\label{dhym}
\end{gather}
where $\hat{\theta}\in (\frac{\pi}{2},\frac{3\pi}{2})$ is a constant. Suppose there exists a solution to \ref{dhym}. Then
\begin{gather}
\displaystyle \int_M \omega^3 \int_M \mathrm{ch}_3(L) \leq 3 \int_{M} \mathrm{ch}_2 (L) \wedge \omega  \int_M \mathrm{ch}_1(L)\wedge \omega^2.
\label{cxyineq}
\end{gather}
\label{linebundlekl}
\end{theorem}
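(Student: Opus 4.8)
The plan is to work pointwise after simultaneously diagonalizing $\omega$ and $\sqrt{-1}F$, and then to reduce the integral inequality (\ref{cxyineq}) to two pointwise consequences of the supercritical phase condition $\hat\theta\in(\frac{\pi}{2},\frac{3\pi}{2})$. First I would fix a point and choose holomorphic coordinates in which $\omega = \sqrt{-1}\sum_j dz^j\wedge d\bar z^j$ and $\sqrt{-1}F = \sqrt{-1}\sum_j \lambda_j\, dz^j\wedge d\bar z^j$, so that $\lambda_1,\lambda_2,\lambda_3$ are the eigenvalues of the curvature relative to $\omega$. A direct expansion gives
\[
\frac{(\omega-F)^3}{\omega^3} \;=\; \prod_{j}(1+\sqrt{-1}\lambda_j) \;=\; (1-e_2)+\sqrt{-1}\,(e_1-e_3),
\]
where $e_k=e_k(\lambda)$ is the $k$-th elementary symmetric polynomial. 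Expressing the Chern forms in the same frame, $\ch_1\wedge\omega^2$, $\ch_2\wedge\omega$, $\ch_3$ and $\omega^3$ become fixed multiples of $e_1\,dV$, $e_2\,dV$, $e_3\,dV$ and $dV$ respectively, where $dV=\omega^3/6$. Substituting these into (\ref{cxyineq}), all numerical and $2\pi$ factors (including the coefficient $3$) cancel, and with $V=\int_M dV$, $E_k=\int_M e_k\,dV$ the claim becomes equivalent to the purely integral inequality
\[
V\,E_3 \;\le\; E_1\,E_2. \tag{$\star$}
\]

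Next I would extract the pointwise content of the dHYM equation. Writing $\theta_j=\arctan\lambda_j\in(-\frac{\pi}{2},\frac{\pi}{2})$, the equation is the statement that $\theta_1+\theta_2+\theta_3=\hat\theta$ is constant, equivalently $\frac{e_1-e_3}{1-e_2}=\tan\hat\theta$ pointwise. Two positivity facts then follow from $\hat\theta\in(\frac{\pi}{2},\frac{3\pi}{2})$. First, since $\re\!\left(\frac{(\omega-F)^3}{\omega^3}\right)=1-e_2$ equals $\bigl|\prod_j(1+\sqrt{-1}\lambda_j)\bigr|\cos\hat\theta$ and $\cos\hat\theta<0$, we get $e_2>1$ everywhere, hence $E_2>V$. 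Second, for any pair we have $\theta_i+\theta_j=\hat\theta-\theta_k\in(0,\pi)$ because $\hat\theta>\frac{\pi}{2}$ and $\theta_k<\frac{\pi}{2}$; therefore $\lambda_i+\lambda_j=\frac{\sin(\theta_i+\theta_j)}{\cos\theta_i\cos\theta_j}>0$, so all three pairwise sums are positive.

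With these in hand the final step is short. Using the identity $e_1e_2-e_3=(\lambda_1+\lambda_2)(\lambda_2+\lambda_3)(\lambda_3+\lambda_1)$, pairwise positivity gives $e_1e_2\ge e_3$ pointwise; combined with $1-e_2<0$ this yields the pointwise bound $e_1\ge\frac{e_1-e_3}{1-e_2}=\tan\hat\theta$, and hence $E_1\ge \tan\hat\theta\cdot V$. On the other hand, integrating the phase relation in the form $e_3=e_1-\tan\hat\theta\,(1-e_2)$ gives $E_3=E_1-\tan\hat\theta\,V+\tan\hat\theta\,E_2$. Multiplying $E_1\ge\tan\hat\theta\,V$ by the positive quantity $E_2-V$ and substituting this identity for $E_3$ produces $E_1E_2\ge V E_3$, which is $(\star)$, and therefore (\ref{cxyineq}).

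I expect the main obstacle to be the passage from the single pointwise \emph{phase} constraint to a \emph{quadratic} inequality among the four integrals: a naive termwise comparison of integrands is too weak to give $(\star)$. The resolution is that supercriticality simultaneously forces $e_2>1$ (so that $E_2-V>0$ is available as a positive multiplier) and, via the factorization of $e_1e_2-e_3$, the linear bound $e_1\ge\tan\hat\theta$; multiplying a linear integral inequality by a positive factor is precisely what upgrades the pointwise data to the global product inequality. The places demanding care are the constant bookkeeping in the reduction to $(\star)$ and tracking the sign of $\cos\hat\theta$ (and the continuous branch of the total phase) throughout the range $(\frac{\pi}{2},\frac{3\pi}{2})$.
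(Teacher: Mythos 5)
The paper itself contains no proof of Theorem \ref{linebundlekl}: it is quoted from Collins--Xie--Yau \cite{collinsxieyau} purely as motivation for the vbMA equation, so there is no internal argument to compare against; your proposal has to be judged on its own, and it checks out. The constant bookkeeping in your reduction is right: with $dV=\omega^3/6$ and $\lambda_j$ the eigenvalues of the real form $\sqrt{-1}F$ relative to $\omega$, one gets $\int_M\omega^3=6V$, $\int_M\mathrm{ch}_1(L)\wedge\omega^2=2E_1/(2\pi)$, $\int_M\mathrm{ch}_2(L)\wedge\omega=E_2/(2\pi)^2$ and $\int_M\mathrm{ch}_3(L)=E_3/(2\pi)^3$, so \ref{cxyineq} is indeed equivalent to $VE_3\le E_1E_2$. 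The three pointwise consequences of supercriticality are all correct: $1-e_2=\prod_j(1+\lambda_j^2)^{1/2}\cos\hat\theta<0$ since $\cos\hat\theta<0$ on $(\tfrac{\pi}{2},\tfrac{3\pi}{2})$; $\theta_i+\theta_j=\hat\theta-\theta_k\in(0,\pi)$ forces $\lambda_i+\lambda_j>0$; and the identity $e_1e_2-e_3=(\lambda_1+\lambda_2)(\lambda_2+\lambda_3)(\lambda_3+\lambda_1)$ (expand $\prod_k(e_1-\lambda_k)$) then gives $e_1e_2\ge e_3$, which upon dividing the phase relation by the \emph{negative} quantity $1-e_2$ yields $e_1\ge\tan\hat\theta$ pointwise. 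The closing step is also sound: multiplying $E_1\ge\tan\hat\theta\,V$ by $E_2-V>0$ is legitimate whatever the sign of $\tan\hat\theta$, and combined with the integrated phase identity $E_3=E_1+\tan\hat\theta\,(E_2-V)$ it gives $VE_3=VE_1+\tan\hat\theta\,V(E_2-V)\le VE_1+E_1(E_2-V)=E_1E_2$. This is, in substance, the same pointwise-to-global mechanism as the original Collins--Xie--Yau proof, so you have in effect reconstructed it. The one point that deserves to be stated rather than merely flagged is the branch issue: the hypothesis must be read as saying the lifted angle $\sum_j\arctan\lambda_j$ equals $\hat\theta$ as a function on $M$ (the Collins--Xie--Yau convention), not merely that the principal argument agrees with $\hat\theta$ modulo $2\pi$; on the other admissible branch, where $\sum_j\arctan\lambda_j=\hat\theta-2\pi\in(-\tfrac{3\pi}{2},-\tfrac{\pi}{2})$, applying your argument to $L^{-1}$ reverses the inequality, so the conclusion genuinely depends on this reading and your proof is complete only under it.
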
 
 In two dimensions the dHYM equation can be written as a Monge-Amp\`ere equation \cite{jacobyau}. In higher dimensions one can get existence results by either treating it directly \cite{collinsjacobyau}, or in some special cases, by rewriting it as a generalised Monge-Amp\`ere equation \cite{pindhym}. \\ 
\indent To extend Theorem \ref{linebundlekl} to vector bundles, one approach would be to consider a naive generalisation of the dHYM equation by replacing the curvature form with the curvature endomorphism. Since the simplest of this flavour of fully nonlinear PDE is the usual complex Monge-Amp\`ere equation, we propose to study the following vector bundle Monge-Amp\`ere (vbMA) equation for a metric $h$ on a holomorphic vector bundle $E$ over a compact complex $n$-dimensional manifold $M$ :     
\begin{gather}
\left ( \frac{i\Theta_h}{2\pi} \right )^n  =\eta Id
\label{vectorbundleMAeqn}
\end{gather}
where $\Theta_h$ is the curvature of the Chern connection of $(E,h)$ and $\eta$ is a given volume form. \\
\indent Note that on Riemann surfaces ($n=1$) \ref{vectorbundleMAeqn} is just the Hermitian-Einstein equation and hence can be solved if and only if $E$ is Mumford polystable. For line bundles on general manifolds, it is equivalent to the Calabi Conjecture which can be solved if the bundle is ample. Hence it is reasonable to expect that for \ref{vectorbundleMAeqn} to have a solution, one needs some positivity condition on $i\Theta_h$ in addition to a stability condition. We observe that on a general vector bundle, it is not even clear whether there is a solution to \ref{vectorbundleMAeqn} for \emph{some} $\eta>0$ (as opposed to \emph{any} given $\eta>0$). In fact, even if $h$ has Nakano-positive curvature (the strongest positivity assumption), $\frac{(i\Theta_h)^n}{\eta}$ may not be a positive-definite endomorphism. \\
\indent Our first result (Theorem \ref{momentmap} in Section \ref{momentmapsection}) is a moment map interpretation of the vbMA equation.  The corresponding infinite-dimensional symplectic form in Theorem \ref{momentmap} is a K\"ahler form near a connection if and only if the curvature of the connection satisfies a positivity condition that we call MA-positivity. 
\begin{definition}
Let $(E,h)$ be a Hermitian holomorphic vector bundle on an $n$-dimensional complex manifold $M$. An endomorphism-valued $(1,1)$-form $\Theta$  is said to be MA-positively curved if for every non-vanishing endomorphism-valued $(0,1)$-form $a$, the following inequality holds at all points on $M$ where $a \neq 0$. 
\begin{gather}
\displaystyle \sum_{k=0}^n i\mathrm{tr} \Big ( a^{\dag} \left (\frac{i\Theta}{2\pi} \right )^k  a \left (\frac{i\Theta}{2\pi} \right )^{n-k-1} \Big ) >0,
\label{MApositivitydefinitionequation}
\end{gather}
where $a^{\dag}$ is the adjoint of $a$ with respect to $h$ and an $(n,n)$-form is considered to be positive if it is a volume form.
\label{MApositivitydefinition}
\end{definition}

In the case of surfaces, this condition implies Griffiths positivity and is implied by Nakano-and-dual-Nakano positivity (Lemma \ref{MApositivityandothers}). It turns out that $\mathbb{CP}^2$ with the Fubini-Study metric is MA-positive but not Nakano positive (and in fact $\mathbb{CP}^n$ cannot have a Nakano positive metric). \\
\indent Now we define a slope  involving higher Chern characters. 
\begin{definition}
Let $E$ denote a coherent torsion-free sheaf over a smooth projective variety $M$ of dimension $n$. Define the Monge-Amp\`ere slope as $\mu_{MA}(E) = \frac{[\mathrm{ch}_n(E)][M]}{rk(E)}$ where the Chern character class is defined using the Whitney product formula and a finite resolution (a Syzygy) of $E$ by vector bundles. 
\label{defnofslope}
\end{definition}
The Monge-Amp\`ere slope defined above can be computed for coherent subsheaves of vector bundles using only subbundles via resolution of singularities. This strategy is used in Section \ref{stabilitysection}. There is also a corresponding stability condition.
\begin{definition}
A holomorphic vector bundle $E$ over a smooth projective variety $M$ of dimension $n$ is defined to be Monge-Amp\`ere stable (MA-stable) if for every coherent saturated subsheaf $S\subset E$ the following inequality holds.
\begin{gather}
\mu_{MA}(S) < \mu_{MA}(E). \nonumber 
\end{gather}
\label{stabilitycondition}
\end{definition} 
We have the following consequence of solvability of the vbMA equation (proved in Section \ref{stabilitysection}) :
\begin{theorem}
Let $E$ be a holomorphic rank-2 vector bundle on a smooth projective surface $M$. Assume that $\eta>0$ is a given volume form. If there exists a smooth metric $h$ such that $(i\Theta_h)^2  = \eta Id$ and $i\Theta_h$ is Griffiths positive, then the following hold.
\begin{enumerate}
\item \emph{Stability} : If $E$ is indecomposable then $E$ is MA-stable.
\item \emph{Chern class inequality} :  
\begin{gather}
 c_1^2(E) - 4c_2(E) \leq 0 \label{KLBMYtypeinequality}
\end{gather} with equality holding if and only if $\Theta$ is projectively flat.
\end{enumerate}

\label{stabilityandChernclassinequality}
\end{theorem}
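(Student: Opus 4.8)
The plan is to work pointwise with the normalized curvature $A=\frac{i\Theta_h}{2\pi}$, a Hermitian-endomorphism-valued $(1,1)$-form, and to extract a single structural consequence of the equation that drives both statements. Writing $A=\frac12(\mathrm{tr}\,A)\,\mathrm{Id}+A_0$ with $A_0$ the trace-free part, and using that the entries of $A$ are $(1,1)$-forms and hence commute under $\wedge$, Cayley--Hamilton gives $A_0\wedge A_0=\tfrac12\mathrm{tr}(A_0^2)\,\mathrm{Id}$, so that $A^2=\big[\tfrac14(\mathrm{tr}\,A)^2+\tfrac12\mathrm{tr}(A_0^2)\big]\mathrm{Id}+(\mathrm{tr}\,A)\wedge A_0$. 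Since the equation $A^2=\tilde\eta\,\mathrm{Id}$ (with $\tilde\eta=\eta/(2\pi)^2>0$) forces the trace-free part of $A^2$ to vanish, I obtain the key identity $(\mathrm{tr}\,A)\wedge A_0=0$, i.e.\ $c_1(E)\wedge A_0=0$ as a matrix of $(2,2)$-forms. Griffiths positivity of $i\Theta_h$ makes its trace $c_1(E)=\mathrm{tr}\,A$ a positive $(1,1)$-form, hence a K\"ahler form; therefore, in any local unitary frame, every entry $(A_0)_{jk}$ is a primitive $(1,1)$-form with respect to $c_1(E)$.

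For the \emph{Chern class inequality}, I would first record the pointwise identity $c_1^2(E)-4c_2(E)=2\,\mathrm{tr}(A_0\wedge A_0)$, which follows from $c_2=\det A=\tfrac14 c_1^2-\tfrac12\mathrm{tr}(A_0^2)$ for a rank-2 matrix. In a unitary frame Hermiticity gives $(A_0)_{kj}=\overline{(A_0)_{jk}}$, so $\mathrm{tr}(A_0\wedge A_0)=\sum_{j,k}(A_0)_{jk}\wedge\overline{(A_0)_{jk}}$. Since each $(A_0)_{jk}$ is primitive for the K\"ahler form $c_1(E)$, and primitive $(1,1)$-forms on a surface are anti-self-dual, $(A_0)_{jk}\wedge\overline{(A_0)_{jk}}=-|(A_0)_{jk}|^2\,\mathrm{vol}\le 0$. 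Thus $c_1^2(E)-4c_2(E)\le 0$ pointwise, hence after integration; and equality forces every $(A_0)_{jk}\equiv0$, i.e.\ $A_0\equiv0$, which is exactly projective flatness of $\Theta$.

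For \emph{stability}, given a coherent subsheaf I would pass to its saturation, a line bundle $L$ that is a subbundle off a finite set, with quotient $Q$. Additivity of the Chern character reduces $\mu_{MA}(L)<\mu_{MA}(E)$ to $\mathrm{ch}_2(L)[M]<\mathrm{ch}_2(Q)[M]$. Gauss--Codazzi for the induced metrics gives $c_1(L)=\rho-\nu$ and $c_1(Q)=\tau+\nu$, where $\rho=\langle Ae,e\rangle$ and $\tau=\langle Af,f\rangle$ for a unitary frame $e,f$ adapted to $L\oplus L^\perp$, with $\rho+\tau=c_1(E)$ and $\nu\ge0$ the semipositive second-fundamental-form term. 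Then
\[
\mathrm{ch}_2(Q)[M]-\mathrm{ch}_2(L)[M]=\tfrac12\int_M\big[(\tau-\rho)+2\nu\big]\wedge c_1(E).
\]
Now $\rho-\tau=2\langle A_0 e,e\rangle$ is a diagonal entry of $A_0$, hence primitive, so $(\tau-\rho)\wedge c_1(E)=0$, leaving $\int_M\nu\wedge c_1(E)\ge0$ because $c_1(E)$ is positive and $\nu$ is semipositive. This is strict unless $\nu\equiv0$, i.e.\ the second fundamental form vanishes and $E=L\oplus Q$ holomorphically; indecomposability rules this out, yielding MA-stability.

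The main obstacle I anticipate is analytic rather than algebraic: making the Gauss--Codazzi computation rigorous across the singular locus of a general coherent subsheaf, where the induced metric on $L$ degenerates. Unlike the degree (linear in the curvature), the slope $\mu_{MA}$ involves the quadratic quantity $\int_M c_1(L)^2$, so one must control the second fundamental form, which is only $L^2$ a priori, near the singular points and verify that the singular contributions carry the favorable sign, so that the clean subbundle inequality persists for arbitrary subsheaves. The remaining cases (torsion subsheaves, which cannot occur in a bundle, and full-rank proper subsheaves) are easier, and I would dispatch them separately.
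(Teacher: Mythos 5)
Your proof of the Chern class inequality is correct and takes a genuinely different route from the paper's. Writing $A=\frac{i\Theta_h}{2\pi}$ and $A_0$ for its trace-free part, you extract from the traceless part of $(i\Theta_h)^2=\eta\, Id$ the frame-independent identity $(\mathrm{tr}A)\wedge A_0=0$ (legitimate: the entries are $(1,1)$-forms, hence commute under wedge, so Cayley--Hamilton applies and $A_0\wedge A_0$ is a multiple of $Id$), and you then conclude from the Hodge--Riemann bilinear relations: every entry of $A_0$ is primitive with respect to $\mathrm{tr}A$, which is a positive form by Griffiths positivity, hence anti-self-dual, so $c_1^2-4c_2=2\,\mathrm{tr}(A_0\wedge A_0)\le 0$ pointwise, with equality exactly where $A_0=0$, i.e.\ projective flatness. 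The paper's Proposition \ref{KLnew} instead diagonalises the diagonal blocks of the curvature at a point and derives the same pointwise inequality from the resulting scalar identities; your argument is cleaner, isolates the structural consequence of the equation (primitivity of the trace-free curvature), and yields the equality case with no extra work. For honest subbundles your stability argument is then essentially the paper's Lemma \ref{forsubbundles} in different clothing: the paper subtracts the two diagonal blocks of the matrix equation and integrates, while you use primitivity to kill $\int_M(\tau-\rho)\wedge c_1(E)$ and the positivity of $c_1(E)$ to get the sign of $\int_M\nu\wedge c_1(E)$; the underlying mechanism (Gauss--Codazzi, $c_1(E)>0$, and the sign of the second-fundamental-form term) is identical.

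The genuine gap is the one you flag yourself: MA-stability quantifies over \emph{all} coherent subsheaves, and your argument is complete only for subbundles. Your proposed remedy --- controlling the a priori merely $L^2$ second fundamental form of the saturation near its finitely many singular points and verifying that ``the singular contributions carry the favorable sign'' --- is exactly the hard analysis you do not carry out, and for the quadratic quantity $\int_M c_1^2$ it is genuinely delicate. The paper avoids this analysis entirely by an algebraic reduction (Proposition \ref{relationbetweenchernprop}, following Sibley and Sibley--Wentworth): if $\tilde S$ denotes the subbundle extending (the pullback of) the subsheaf $S$ off its singular set --- after finitely many blow-ups in general, which the paper asserts are unnecessary on a surface since the singular set is already zero-dimensional --- then $\tilde S/S$ is a torsion sheaf whose zero-dimensional support contributes nonnegatively, so that $\int_M \mathrm{ch}_2(S)=\int \mathrm{ch}_2(\tilde S)-\sum_i m_{Z_{i,0}}\le\int \mathrm{ch}_2(\tilde S)$, and Lemma \ref{forsubbundles} applied to $\tilde S$ finishes the proof (Proposition \ref{stability}). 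In other words, the favorable sign you hoped to establish analytically is a sheaf-theoretic length count for point-supported torsion sheaves; importing that single fact would complete your proof along the paper's lines, whereas without it you have proved stability only with respect to subbundles. One further caution: your closing claim that full-rank proper subsheaves are ``easier'' is not formally true --- their quotients are torsion sheaves which, when supported on a divisor $D$, can have negative second Chern character (e.g.\ $\int_M\mathrm{ch}_2(\mathcal{O}_D)=-D^2/2$), so excluding them requires using the positivity of $E$; the paper's own argument, for that matter, implicitly treats only rank-one subsheaves.
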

\indent In Section \ref{specialcasessection} we look at a few examples where we produce solutions for \emph{some} $\eta$ - $T\mathbb{CP}^n$, Mumford stable bundles, and Vortex bundles. Our last example deals with bundles originally studied by Garc\'ia-Prada \cite{Oscar}. In this example, we dimensionally reduce Equation \ref{vectorbundleMAeqn} to an equation (the Monge-Amp\`ere Vortex equation) for a single function on a Riemann surface and prove a Kobayashi-Hitchin correspondence for it. This theorem may be viewed as the main result of this paper.
\begin{theorem}
Let $(L,h_0)$ be a holomorphic line bundle over a compact Riemann surface $X$ such that its curvature $\Theta_{0}$ defines a K\"ahler form $\omega_{\Sigma}=i\Theta_0$ over $M$. Assume that the degree of $L$ is $1$. Let $r_1,r_2\geq 2$ be two integers, and $\phi \in H^0 (X,L)$ which is not identically $0$. Then the following are equivalent.
\begin{enumerate}
\item Stability : $r_1 >r_2$.
\item Existence : There exists a smooth metric $h$ on $L$ such that the curvature $\Theta_h$ of its Chern connection $\nabla_h$ satisfies the Monge-Amp\`ere Vortex equation.
\begin{gather}
i\Theta_h = (1-\vert \phi \vert_h^2) \frac{\mu \omega_{\Sigma}+i\nabla_h ^{1,0}\phi \wedge \nabla^{0,1} \phi^{\dag_h}}{(2r_2+\vert \phi \vert_h^2)(2+2r_2-\vert \phi \vert_h^2)},
\label{MAVortexeq}
\end{gather}
where $\mu = 2(r_2(r_1+1)+r_1(r_2+1))$ and $\phi^{\dag_h}$ is the adjoint of $\phi$ with respect to $h$ when $\phi$ is considered as an endomorphism from the trivial line bundle to $L$.
\end{enumerate}
Moreover, if a solution $h$ to \ref{MAVortexeq} satisfying $\vert \phi \vert_h^2 \leq 1$ exists, then it is unique.
\label{RSPDEthm}
\label{KobHitcorrespondence}
\end{theorem}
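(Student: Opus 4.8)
The plan is to reduce the endomorphism equation to a single scalar PDE and then run a Kobayashi--Hitchin-style argument: the implication (2)$\Rightarrow$(1) by integration, the implication (1)$\Rightarrow$(2) by a continuity method, and the uniqueness by a maximum principle. The reduction is the first step. Since $L$ has rank one, write $h=h_0e^{-2f}$, so that $i\Theta_h=\omega_\Sigma+2i\pbp f$ and $s:=|\phi|_h^2=e^{-2f}|\phi|_{h_0}^2$ is a globally defined function on $X$ vanishing exactly at the (finitely many) zeros of $\phi$, where $\log s$ has logarithmic singularities. Because $\phi$ is holomorphic and $L$ has rank one, the Cauchy--Schwarz inequality $|\nabla_h^{1,0}\phi|_h^2\,|\phi|_h^2=|\partial s|^2$ is an equality, giving the pointwise identity $i\nabla_h^{1,0}\phi\wedge\nabla^{0,1}\phi^{\dag_h}=\tfrac1s\,i\partial s\wedge\bar\partial s$, which by the Poincar\'e--Lelong relation $i\Theta_h=-i\pbp\log s$ (away from the zeros) also equals $s\,i\Theta_h+i\pbp s$. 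Substituting into \ref{MAVortexeq} and using the algebraic cancellation $(2r_2+s)(2+2r_2-s)-s(1-s)=4r_2(r_2+1)+s$, the equation becomes the scalar quasilinear equation $\big(4r_2(r_2+1)+s\big)\,i\Theta_h=(1-s)\big(\mu\,\omega_\Sigma+i\pbp s\big)$ for $f$, which is elliptic wherever $s<1$.

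For (2)$\Rightarrow$(1) I would first show any solution satisfies $s\le1$. Away from the zeros $i\Theta_h=-i\pbp\log s$, so at an interior maximum of $s$ the trace of $i\pbp\log s$ is $\le0$, whence $i\Theta_h\ge0$ there; the scalar equation then forces $(1-s)\ge0$ at that point (the denominator and $\mu\,\omega_\Sigma$-term are positive), so $\max_X s\le1$. Feeding $s\le1$ back into the equation gives $i\Theta_h\ge0$ \emph{everywhere}. Integrating against $\omega_\Sigma$, using $\int_X i\Theta_h=2\pi\deg L=2\pi=\int_X\omega_\Sigma$, $\int_X i\pbp s=0$, and integration by parts, I expect to reach an identity of the form
\[ 2\pi(r_1-r_2)(4r_2+2)=\int_X(1+s)\,i\nabla_h^{1,0}\phi\wedge\nabla^{0,1}\phi^{\dag_h}+\mu\int_X s\,\omega_\Sigma-\int_X s^2\,i\Theta_h, \]
whose right-hand side is strictly positive once $0\le s\le1$ and $i\Theta_h\ge0$ (since $s^2\le s$ makes the last term dominated by the first, and $\phi\not\equiv0$ forces $\int_X s\,\omega_\Sigma>0$). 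As $4r_2+2>0$, this yields $r_1>r_2$.

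For (1)$\Rightarrow$(2) I would use the method of continuity along a one-parameter family interpolating between a trivially solvable equation (for instance $\mu$ taken very large) and \ref{MAVortexeq}. Openness would follow from invertibility of the linearized operator, a second-order elliptic operator whose zeroth-order term has the sign dictated by the MA-positivity already built into the equation. Closedness needs a priori estimates, and \textbf{the $C^0$ estimate is the crux}: the upper bound $s\le1$ comes for free from the maximum-principle argument above, but the lower bound on $s$ away from the zeros of $\phi$ (equivalently an upper bound on $f$) is exactly where the stability hypothesis $r_1>r_2$ must enter, through the same balance between $\mu$ and $4r_2(r_2+1)$ seen in the integral identity. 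I anticipate a Moser-iteration or comparison argument in which $r_1>r_2$ guarantees the forcing term dominates and pins $f$ from above; once the $C^0$ bound is in hand and the equation is non-degenerate (the mild degeneracy as $s\to1$ being controlled by the factor $(1-s)$ and the positive lower bound on $4r_2(r_2+1)+s$), the $C^1$, $C^2$ and higher estimates are standard elliptic bootstrapping.

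Finally, for uniqueness under $|\phi|_h^2\le1$, given two solutions I would write one metric as the other times $e^{-2v}$ and subtract the two scalar equations to obtain a linear second-order elliptic equation for $v$ with no zeroth-order term and with lower-order coefficients whose signs are controlled precisely by the constraint $s\le1$ (this is exactly where the hypothesis is needed, to secure the correct monotonicity of the nonlinearity). Evaluating at the extrema of $v$ and invoking the strong maximum principle forces $v$ to be constant, and the normalization $\int_X i\Theta_h=2\pi$ then forces that constant to vanish, giving $h_1=h_2$. Overall I expect the single genuinely hard point of the theorem to be the stability-driven $C^0$ estimate in the existence direction; the scalar reduction is clean algebra and everything downstream of the estimate is routine elliptic theory.
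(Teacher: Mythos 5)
Your scalar reduction is correct: for a line bundle $i\nabla_h^{1,0}\phi\wedge\nabla^{0,1}\phi^{\dag_h}=s\,i\Theta_h+i\partial\bar{\partial}s$ (the paper's Weitzenb\"ock identity), and the algebra $(2r_2+s)(2+2r_2-s)-s(1-s)=4r_2(r_2+1)+s$ checks out. Your (2)$\Rightarrow$(1) argument is also genuinely different from the paper's: the paper feeds the solution through its dimensional-reduction theorem to produce a Griffiths-positive solution of the vbMA equation on the rank-$2$ Vortex bundle over $\Sigma\times\mathbb{CP}^1$, invokes its MA-stability theorem for surfaces, and compares MA-slopes of the obvious sub-line bundle; your integration by parts stays on the curve, and your identity is right --- it can be rewritten as
\[
4\pi(2r_2+1)(r_1-r_2)=\int_X s(1-s)\,i\Theta_h+\mu\int_X s\,\omega_{\Sigma}+\int_X s\,i\nabla_h^{1,0}\phi\wedge\nabla^{0,1}\phi^{\dag_h},
\]
which is manifestly positive once $0\le s\le 1$ and $i\Theta_h\ge 0$, so it is a more elementary and self-contained route to $r_1>r_2$. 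One caveat: your maximum-principle proof of $s\le 1$ is incomplete. At an interior maximum $\nabla_h^{1,0}\phi(p)=0$, so Weitzenb\"ock turns the equation into $(2r_2+s)(2r_2+2-s)\,i\Theta_h=(1-s)\mu\omega_{\Sigma}$ at $p$; you may not assume $\mu\omega_{\Sigma}+i\partial\bar{\partial}s>0$ there, and what one actually gets is the dichotomy $\max s\le 1$ or $\max s\ge 2r_2+2$. (The paper's corresponding lemma is equally terse, but inside its continuity method it works in the open set where $2+2r_2-t\vert\phi\vert^2>0$, which excludes the second branch.)

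The genuine gaps are in (1)$\Rightarrow$(2) and in uniqueness --- exactly the two places the paper flags as the most delicate, and which you dismiss as routine. First, ``interpolate with $\mu$ taken very large'' is not a workable continuity path: you give no reason the equation with an altered constant is solvable, and the paper's path is instead engineered so that $t=0$ is solved exactly by $h_0$; it replaces $\mu\omega_{\Sigma}$ by $\mu u^{1-t}\omega_{\Sigma}$ with $u=1/(\alpha(1-\vert\phi\vert_0^2))$ and $\alpha=\mu/(2r_2(2r_2+2))$, scales the gradient term by $t$, and inserts $t$ into the denominators. Stability enters solely as $\alpha>1\iff r_1>r_2$ in the $C^0$ upper bound, so your instinct about where stability must enter is correct, but that estimate exists only for this particular path; the paper's postscript explicitly notes that naive paths break precisely this bound. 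Second, openness is not ``routine invertibility'': the linearized operator contains $\partial\bar{\partial}(\vert\phi\vert_h^2w)$, is not self-adjoint, and carries zeroth-order terms (such as one proportional to $+t\,i\Theta_h\vert\phi\vert_h^2w$) with the wrong sign for a naive maximum principle; the paper has to show the \emph{adjoint} has trivial kernel via a maximum principle for the weighted function $\tilde v=v\,(2r_2+t\vert\phi\vert_h^2)$, with a careful verification that two coefficient combinations are positive. Third, your uniqueness argument is wrong as stated: subtracting the two scalar equations does \emph{not} produce an equation for $v$ with ``no zeroth-order term.'' The zeroth-order coefficient contains a term proportional to $(2s-1)\,i\partial\bar{\partial}s=(2s-1)\bigl(i\nabla^{1,0}\phi\wedge\nabla^{0,1}\phi^{\dag_h}-s\,i\Theta_h\bigr)$, which has no definite sign and is not controlled by $s\le 1$ (the gradient term blows up as $s\to 1$); also, a constant $v$ does not change $i\Theta_h$, so your normalization step cannot kill constants --- only the $\vert\phi\vert_h^2$-dependence of the equation can. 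This is exactly why the paper proves uniqueness only for a small parameter $t_0$ along its path, using the test function $f(\beta+\vert\phi\vert_{\mathfrak{h}}^2)$ with $\beta$ large and an averaged metric $\mathfrak{h}$, and then propagates uniqueness to $t=1$ by running a second continuity path backwards from the other solution. A direct strong-maximum-principle argument at $t=1$ of the kind you sketch is precisely what the paper could not make work.
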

 The existence part of Theorem \ref{KobHitcorrespondence} is proved using the method of continuity. Surprisingly enough, it turns out that the openness of the continuity path as well as uniqueness are the most delicate parts of the proof. Both results use the maximum principle for an appropriately chosen function.\\
\indent It turns out that MA-stability of the corresponding rank-2 Vortex bundle implies $r_1>r_2$ in Theorem \ref{KobHitcorrespondence}. This observation lends evidence to the following conjecture. \\

\emph{The vbMA equation admits a smooth solution if and only if the bundle is MA-stable.}\\

\indent Another observation is that the Vortex bundle is actually Mumford-stable when $r_1>r_2$. This fact ``explains" the KLMBY-type inequality \ref{KLBMYtypeinequality}. Putting this remark and the Griffiths Conjecture (which states that Hartshorne ample bundles admit Griffiths positively curved metrics) together, one is tempted to conjecture the following. \\

\emph{A rank-2 MA-stable indecomposable Hartshorne ample bundle $E$ over a compact complex surface is Mumford semistable}.\\

\emph{Acknowledgements} : The author thanks Richard Wentworth and Indranil Biswas for useful discussions. It is also a pleasure to thank Dror Varolin for constructive criticism as well as encouragement. The author is grateful to the anonymous referee for their comments. This work is partially supported by an SERB grant : ECR/2016/001356. The author is also grateful to the Infosys foundation for the Infosys Young Investigator Award. This work is also partially supported by grant F.510/25/CAS-II/2018(SAP-I) from UGC (Govt. of India).
\section{Moment map interpretation}\label{momentmapsection}
\indent In a manner analogous to that considered by J. Fine \cite{fine} regarding the Calabi Conjecture, equation \ref{vectorbundleMAeqn} can be obtained as the zero of a moment map associated to a Hamiltonian action of a certain gauge group on a space of unitary integrable connections.  More precisely, we have the following theorem.
\begin{theorem}
Let $(E,h_0)$ be a Hermitian complex vector bundle of rank-$r$ with a holomorphic structure given by a unitary integrable connection $\nabla_0$ over a compact complex manifold $M$ of complex dimension $n$. Let $\eta>0$ be an $(n,n)$-form on $M$ such that $\displaystyle \int_M \eta =\frac{n!}{r} \int_M \mathrm{ch}_n (E)$ where $ch$ is the Chern character. Let $\mathcal{A}^{1,1}$ denote the space of unitary integrable connections on $E$. There exists a holomorphic line bundle $\mathbf{Q}$ on an open subset $U^{1,1}\subset\mathcal{A}^{1,1}$ with a unitary Chern connection whose curvature $\Omega$ is a symplectic form on $U^{1,1}$. Moreover, the unitary gauge group $\mathcal{G}$ acts in a Hamiltonian manner on $U^{1,1}$ with a moment map $\mu$. There is a zero of the moment map in the complex gauge orbit of $\nabla_0$ if and only if the following vector bundle Monge-Amp\`ere equation is satisfied for an $h_0$-Hermitian smooth gauge transformation $g$ 
\begin{gather}
\Bigg ( \frac{i(\Theta_0 + \bar{\partial}(g^{-1} \partial_0 g))}{2\pi} \Bigg ) ^n = \eta Id,
\label{vbgenma}
\end{gather}
where $\Theta_0$ is the curvature of $\nabla_0$.
\label{momentmap}
\end{theorem}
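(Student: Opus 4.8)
The plan is to follow the strategy of Atiyah-Bott, Donaldson, and Fine \cite{fine}, realizing $\Omega$ as the fiber integral of a universal characteristic form and the moment map as the curvature equation it prequantizes. First I would fix the underlying Hermitian bundle $(E,h_0)$ and work on the affine space $\mathcal{A}^{1,1}$ of unitary integrable connections, whose tangent space at $\nabla$ is (the $\bar\partial_\nabla$-closed part of) $\Omega^{0,1}(\mathrm{End}\,E)$; a deformation $a$ there corresponds to the anti-Hermitian connection variation $\alpha = a - a^{\dag}$. This space carries the natural almost complex structure $Ja = ia$, integrable because $\mathcal{A}^{1,1}$ is modeled on holomorphic structures. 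The whole construction is designed so that the Hermitian form $\Omega(\cdot, J\cdot)$ recovers, up to a positive constant, the integrand of the MA-positivity inequality of Definition \ref{MApositivitydefinition}, which is precisely why $\Omega$ is Kähler exactly on the MA-positive locus.

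To build $\Omega$ and $\mathbf{Q}$, I would consider the universal connection $\mathbb{A}$ on the pullback of $E$ to $M \times \mathcal{A}^{1,1}$, whose curvature $\mathbb{F}$ has $M$-component equal to $\Theta$ and a mixed component recording the tautological deformation $\alpha$. Fiber-integrating the degree-$(2n,2)$ part of $\mathrm{tr}(i\mathbb{F}/2\pi)^{n+1}$ over $M$ produces a $2$-form on $\mathcal{A}^{1,1}$; after antisymmetrization this is
\begin{gather*}
\Omega_\nabla(\alpha_1,\alpha_2) = c_n \int_M \sum_{k=0}^{n-1} \mathrm{tr}\Big( \alpha_1 \big(\tfrac{i\Theta}{2\pi}\big)^k \alpha_2 \big(\tfrac{i\Theta}{2\pi}\big)^{n-1-k}\Big) - (1 \leftrightarrow 2),
\end{gather*}
for a universal constant $c_n$, where only the bidegree-$(n,n)$ terms survive the integration. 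Because $\Omega$ is the fiber integral of the closed invariant form $\mathrm{ch}_{n+1}(\mathbb{A})$ it is automatically closed, and the complex-geometric origin of the construction makes it of type $(1,1)$ with respect to $J$. The bundle $\mathbf{Q}$ is then the associated determinant/Deligne-type prequantum line bundle with a Chern connection of curvature $\Omega$; since $\mathcal{A}^{1,1}$ is contractible one may alternatively exhibit $\mathbf{Q}$ as trivial with an explicit connection $1$-form $\sigma$ satisfying $d\sigma=\Omega$, obtained by transgressing $\mathrm{ch}_{n+1}$. The open set $U^{1,1}$ is defined as the locus where $\Omega$ is non-degenerate (and $\mathcal{A}^{1,1}$ is smooth); by the identity $\Omega(\alpha, J\alpha) = (\text{MA-positivity integrand})$, this locus contains every MA-positive connection, hence is a non-empty open neighbourhood of the connections of interest.

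Next I would verify that the unitary gauge group $\mathcal{G}$, with Lie algebra $\Omega^0(\mathfrak{u}(E))$, acts on $U^{1,1}$ preserving $\Omega$ (immediate, since $\Omega$ is canonically associated to the connection) and that the action is Hamiltonian. The infinitesimal action of $\xi$ is the vector field $X_\xi = d_\nabla \xi$, and I would solve $d\langle \mu, \xi\rangle = \iota_{X_\xi}\Omega$ by integration by parts using $\dot\Theta = d_\nabla \alpha$; the Bianchi identity together with invariance of the trace collapses the sum over $k$ and yields
\begin{gather*}
\langle \mu(\nabla), \xi\rangle = \int_M \mathrm{tr}\Big( \xi \big[ \big(\tfrac{i\Theta}{2\pi}\big)^n - \eta\, Id \big] \Big).
\end{gather*}
The normalization $\int_M \eta = \frac{n!}{r}\int_M \mathrm{ch}_n(E)$ is exactly what forces the trace of the bracketed quantity to integrate to zero, so that $\mu$ is a genuine equivariant moment map whose zero set is the vbMA equation. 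Finally, the passage from a zero of $\mu$ in the \emph{complex} gauge orbit of $\nabla_0$ to a solution of \ref{vbgenma} is the standard dictionary between moving in the complexified orbit and changing the Hermitian metric: an $h_0$-Hermitian $g\in \mathcal{G}^{\mathbb{C}}$ transforms $\nabla_0$ into the Chern connection of the altered metric, whose curvature is $\Theta_0 + \bar\partial(g^{-1}\partial_0 g)$, giving \ref{vbgenma} verbatim.

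The step I expect to be the main obstacle is the explicit moment-map computation, because the variation of $\Theta^n$ involves the non-commutative product of $\mathrm{End}(E)$-valued forms: unlike Fine's scalar Calabi setting, the factors $\Theta$ and $d_\nabla\alpha$ do not commute, so $\frac{d}{dt}\Theta^n = \sum_k \Theta^k (d_\nabla\alpha)\Theta^{n-1-k}$, and one must wield cyclicity of the trace and the Bianchi identity carefully to integrate by parts and recognize the outcome as $d\langle\mu,\xi\rangle$. This same non-commutativity is what forces the symmetrized sum over $k$ in $\Omega$ and ties the entire picture to MA-positivity rather than to ordinary positivity; a direct verification of closedness of $\Omega$ (in place of the characteristic-class argument) would run into identical bookkeeping.
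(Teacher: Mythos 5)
Your moment-map computation, the equivariance argument, and the dictionary between the complexified gauge orbit and Hermitian metrics (polar decomposition, curvature of the transformed connection being $\Theta_0 + \bar{\partial}(g^{-1}\partial_0 g)$ up to conjugation) coincide with the paper's proof, and your fiber-integration argument for closedness of $\Omega$ is a legitimate, Atiyah--Bott style replacement for the paper's route, which instead deduces closedness from exhibiting $\Omega$ as a curvature form. The genuine gap is the construction of $\mathbf{Q}$, which is part of the statement being proved. Your first suggestion, the ``associated determinant/Deligne-type prequantum line bundle with a Chern connection of curvature $\Omega$,'' assumes exactly what must be proved: if one takes the Quillen determinant of $\pi_1^{*}E$ itself, the Bismut--Gillet--Soul\'e theorem gives curvature $\int_M \mathrm{ch}(\pi_1^{*}E)\,Td(M)$, and the lower Chern character components paired against the positive-degree Todd forms contribute extra terms that do not vanish in general, so the naive determinant bundle does \emph{not} prequantize $\Omega$. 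The paper's key device, absent from your proposal, is a virtual bundle $\mathcal{E}$ --- an integer linear combination of $E, E^{\otimes 2}, \dots, E^{\otimes(n+2)}$ with coefficients read off from an inverse Vandermonde-type matrix --- arranged so that the full Chern--Weil form satisfies $\mathrm{ch}(\mathcal{E}) = N\,\mathrm{tr}\big(\tfrac{i\Theta_0}{2\pi}\big)^{n+1}$ identically, all lower-degree components cancelling. Pulling this back to $M \times \mathcal{A}^{1,1}$ with the tautological connection and applying theorem 1.27 of Bismut--Gillet--Soul\'e, only the degree-zero part of $Td(M)$ survives the fiber integration, and the Quillen curvature is exactly $\Omega$; this yields holomorphicity, unitarity of the Chern connection, integrality, closedness, and the $(1,1)$ property in one stroke.

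Your fallback --- triviality of $\mathbf{Q}$ via contractibility of $\mathcal{A}^{1,1}$ --- is also unjustified as stated: $\mathcal{A}^{1,1}$ is the zero locus of $A \mapsto F_A^{0,2}$ inside the affine space of all unitary connections, not an affine subspace, and nothing in your argument controls its homotopy type. The idea is repairable, since the defining formula for $\Omega$ makes sense and remains closed on the full affine space $\mathcal{A}$, where the Chern--Simons transgression of $\mathrm{tr}(i\mathbb{F}/2\pi)^{n+1}$ does provide an explicit global primitive; restricting that primitive to $U^{1,1}$ exhibits $\Omega$ as exact there, and a trivial bundle with the corresponding connection one-form, holomorphic structure given by its $(0,1)$-part (integrable because $\Omega$ has type $(1,1)$), would then satisfy the statement. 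But as written, neither of your two routes to $\mathbf{Q}$ is complete, and this is precisely the step where the paper's virtual-bundle construction does the real work.
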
 
\begin{proof}
We follow some ideas of \cite{fine, Pingad}. The tangent space of $\mathcal{A}^{1,1}$ consists of endomorphism-valued $(0,1)$-forms $a$ such that $\bar{\partial} a=0$. It is clear that the gauge group $\mathcal{G}$ preserves integrability. The Lie algebra $T_I \mathcal{G}$ of the gauge group consists of skew-Hermitian matrices $iH$. For future use we note that  the space $\mathcal{A}^{1,1}$ can be thought of as a complex manifold by thinking of it as a subset of the complex vector space (endowed the conjugate of the usual complex structure) of $\bar{\partial}$ operators satisfying $\bar{\partial}^2=0$. The bundle $\mathbf{Q}$ will be the Quillen determinant bundle of a virtual bundle $\mathcal{E}$ on $M \times A^{1,1}$. \\
\indent Firstly, we find the ``correct" symplectic form $\Omega$.
\begin{lemma}
Let $U^{1,1} \subset \mathcal{A}^{1,1}$ be an open subset consisting of connections $A$ so that the closed 2-form $$\Omega_A (a,b) =\frac{W}{2\pi}\displaystyle \int_M \sum_{k=0}^{n-1}\mathrm{tr}\left (a\Big (\frac{i\Theta_A}{2\pi}\Big) ^k b \Big (\frac{i\Theta_A}{2\pi} \Big ) ^{n-1-k}   \right )$$ (where $W \geq 1$ is any integer) is actually a K\"ahler form. An equivariant moment map $\mu$ corresponding to the symplectic form $\Omega$  is given by the following equation.
\begin{gather}
\mu_A(iH)=W \displaystyle \int_M \mathrm{tr}\left (H \left (\Big (\frac{i\Theta_A}{2\pi} \Big) ^n-\eta Id \right )\right ).
\label{momentmapequation}
\end{gather}
\label{correctsymplecticform}
\end{lemma}
\begin{proof}
The fact that this form $\Omega$ is closed will follow from a later result that it is in fact the curvature of a line bundle. Let $b$ be a skew-Hermitian endomorphism. By definition of the moment map, the variation of $\mu$ at $A$ along $b$ ought to be $-\Omega(id_A H, b)$. Indeed,
\begin{gather}
\delta_b \mu_A (iH)=W\displaystyle \int_M  \sum_{k=0}^{n-1} \mathrm{tr}\left (H\Big (\frac{i\Theta_A}{2\pi} \Big) ^k \frac{id_Ab}{2\pi} \Big (\frac{i\Theta_A}{2\pi} \Big) ^{n-k-1}\right ) \nonumber \\
=-W\displaystyle \int_M  \sum_{k=0}^{n-1} \mathrm{tr}\left (d_A H\Big (\frac{i\Theta_A}{2\pi} \Big) ^k \frac{ib}{2\pi} \Big (\frac{i\Theta_A}{2\pi} \Big) ^{n-k-1}\right ) =-\Omega(id_A H,b).
\end{gather}
Now we prove equivariance. Indeed, 
\begin{gather}
\mu_{g.A}(iH)=\displaystyle \int_M \mathrm{tr}\left (H g\left (\Big (\frac{i\Theta_A}{2\pi} \Big) ^n-\eta Id \right )g^{-1}\right )\nonumber \\
=\mu_{A}(ig^{-1}Hg).
\end{gather}
\end{proof}
\indent It is clear that $\mathcal{G}$ preserves $U^{1,1}$ because $\Theta_{u.A} = u\Theta_A u^{\dag}$ which continues to satisfy the positivity condition (for endomorphism valued $(0,1)$-forms $a$)
\begin{gather}
i\Omega_{u.A} (a^{\dag},a) =\frac{Wi}{2\pi}\displaystyle \int_M \sum_{k=0}^{n-1}\mathrm{tr}\left (a^{\dag}\Big (\frac{i\Theta_{u.A}}{2\pi}\Big) ^k a \Big (\frac{i\Theta_{u.A}}{2\pi} \Big ) ^{n-1-k}   \right )\nonumber \\
= \frac{Wi}{2\pi}\displaystyle \int_M \sum_{k=0}^{n-1}\mathrm{tr}\left (u^{\dag}a^{\dag}u\Big (\frac{i\Theta_{A}}{2\pi}\Big) ^k u^{\dag}au \Big (\frac{i\Theta_{A}}{2\pi} \Big ) ^{n-1-k}   \right )
 >0, \ \forall \ a\neq 0 
\label{positivitycondition}
\end{gather}
if and only if $\Theta_A$ does. \\
\indent  The complexification $\mathcal{G}_{\mathbb{C}}$ acts on $\bar{\partial}$ operators according to $v.A^{0,1} = vA^{0,1}v^{-1} - \bar{\partial}v v^{-1}$. This gives rise to a unitary connection  $v.A = vA^{0,1}v^{-1} +(v^{-1})^{\dag} A^{1,0} v^{\dag}-\bar{\partial}v v^{-1}  + (v^{-1})^{\dag} \partial v^{\dag}$. Every such $v$ can be written uniquely as $v=ug$ where $u$ is $h_0$-unitary and $g$ is $h_0$-Hermitian (the polar decomposition). It is easy to see that $v.A$ is still integrable. The curvature of $v.A$ is (see page 5 of \cite{don2} for instance) $\Theta_{v.A}=v\Theta_A v^{-1} + v\bar{\partial}((g^{\dag}g)^{-1}\partial_A (g^{\dag}g))v^{-1}.$ Therefore, there is a zero of the moment map in the complex gauge orbit of $\nabla_0$ if and only if there is a solution to the vector bundle Monge-Amp\`ere equation. \\
\indent Now take the bundle $E$ equipped with the connection $\nabla_0$ and form the following virtual bundle
\begin{gather}
\mathcal{E} = N \left [\begin{array}{ccccc} 0&0& \ldots & 0& 1\end{array} \right ]^T \left [\begin{array}{ccccc}1& 1& \frac{1}{2!} & \ldots & \frac{1}{(n+1)!} \\
 1& 2& \frac{2^{2}}{2!} & \ldots & \frac{2^{n+1}}{(n+1)!} \\
 \vdots& \vdots& \vdots & \ddots & \vdots \\
 1& (n+2)&  \frac{(n+2)^{2}}{2!} & \ldots & \frac{(n+2)^{n+1}}{(n+1)!} \end{array} \right ]^{-1} \left [ \begin{array}{c} E \\ E^{\otimes 2} \\ E^{\otimes 3} \\ \vdots \\ E^{\otimes (n+2)}\end{array}  \right ],
\end{gather}
where $N$ is a large enough positive integer that clears the denominators. If $\mathcal{E}$ is equipped with the induced connection, then it is easily seen to satisfy 
\begin{gather}
\mathrm{ch}(\mathcal{E})=N \mathrm{tr}\left ( \frac{i\Theta_0}{2\pi}\right )^{n+1}.
\label{almostthere}
\end{gather} 
Consider the virtual bundle $\tilde{\mathcal{E}}=\pi_1^{*} \mathcal{E}$ over $M \times \mathcal{A}^{1,1}$. Define a connection $\mathbb{A}(p,A)=A(p)$ on $\tilde{\mathcal{E}}$. It is not hard to see that this connection defines an integrable $\bar{\partial}$ operator on $\tilde{\mathcal{E}}$. Finally, we have the following lemma that completes the proof of the theorem.
\begin{lemma}
The symplectic form $\Omega$ is the first Chern form of the Quillen metric on the Quillen determinant bundle of $\tilde{\mathcal{E}}$ (equipped with the aforementioned holomorphic structure).
\label{quillenlemma}
\end{lemma}
\begin{proof}
By Theorem 1.27 of \cite{GS} we see that the first Chern form of the Quillen metric of the Quillen determinant of $\tilde{\mathcal{L}}$ is given by 
\begin{gather}
\tilde{\Omega} = \displaystyle \int _M [ch(\tilde{\mathcal{E}})]^{1,1}Td(M).
\label{curvform}
\end{gather}
Consider a surface full of connections in $\mathcal{A}^{1,1}$ defined by $\Phi : X\times \mathbb{R}^2 \rightarrow \mathcal{A}^{1,1}$ as $\Phi (p,x, y) = A-xa-yb$. Therefore, using formulae \ref{almostthere} and \ref{curvform}  we get
\begin{gather}
\tilde{\Omega}_{x=y=0} (a,b) = \Omega_{x=y=0}(a,b),
\end{gather}
thus proving the lemma.
\end{proof}
\end{proof}
\indent Now we have an ``openness" result.
\begin{lemma}
If there is a metric $h$ on an indecomposable holomorphic vector bundle $E$ whose curvature $\Theta_h$ satisfies the vector bundle Monge-Amp\`ere equation $\left ( \frac{i\Theta_h}{2\pi} \right)^n = \eta Id$ for some $\eta>0$ such that $\Theta_h$ is MA-positive then for all volume forms $\alpha$ sufficiently close $\eta$, there exists a smooth solution $h_{\alpha}$ (that is unique up to rescaling by a constant among all solutions in a neighbourhood of $h$) to $\left ( \frac{i\Theta_{h_{\alpha}}}{2\pi} \right)^n = \alpha Id$. 
\label{opennesspositivity}
\end{lemma}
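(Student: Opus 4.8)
The plan is to prove this by the implicit function theorem in Hölder spaces. I would parametrise nearby metrics as $h_\phi = h e^{\phi}$ for $h$-Hermitian endomorphisms $\phi$ and set $F(\phi) = \left(\frac{i\Theta_{h_\phi}}{2\pi}\right)^n$, a Hermitian-endomorphism-valued $(n,n)$-form with $F(0)=\eta\, Id$. First I would observe that this is a determined elliptic system: both $\phi$ and $F(\phi)$ have $r^2$ real components, while $\int_M \mathrm{tr}\, F(\phi) = n!\int_M \mathrm{ch}_n(E)$ is a topological constant. Hence the correct space of data is the slice of volume forms $\alpha$ near $\eta$ with $\int_M \alpha = \int_M \eta$, which is exactly the normalisation $\int_M\eta = \frac{n!}{r}\int_M \mathrm{ch}_n(E)$ already appearing in theorem \ref{momentmap}. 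Since $\Theta_{e^c h}=\Theta_h$ for a constant $c$, solutions can at best be unique up to the rescaling $h\mapsto e^c h$, which is precisely the asserted ambiguity.

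The key computation is the linearisation at $\phi=0$. Differentiating the Chern curvature along $h_t = he^{t\phi}$ gives $\dot\Theta = \bar\partial_A\partial_A\phi$, so that
$$\mathcal{L}\phi := DF_0(\phi) = \frac{i}{2\pi}\sum_{k=0}^{n-1}\left(\frac{i\Theta}{2\pi}\right)^k \bar\partial_A\partial_A\phi \left(\frac{i\Theta}{2\pi}\right)^{n-1-k}.$$
This is a second-order linear operator whose principal symbol at a real covector $\xi=\xi^{1,0}+\xi^{0,1}$ sends $\phi$ to $\frac{i}{2\pi}\sum_k(\cdots)^k(\xi^{0,1}\wedge\xi^{1,0}\phi)(\cdots)^{n-1-k}$. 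Pairing in $L^2$ and integrating by parts, which is legitimate because the Bianchi identity $\bar\partial_A\Theta = 0$ lets $\bar\partial_A$ pass through the curvature factors, yields
$$\langle\mathcal{L}\phi,\phi\rangle_{L^2} = \frac{i}{2\pi}\int_M\sum_{k=0}^{n-1}\mathrm{tr}\left((\bar\partial_A\phi)^{\dagger}\left(\frac{i\Theta}{2\pi}\right)^k \bar\partial_A\phi\left(\frac{i\Theta}{2\pi}\right)^{n-1-k}\right),$$
which is exactly the MA-positivity expression \ref{MApositivitydefinitionequation} evaluated on the endomorphism-valued $(0,1)$-form $a=\bar\partial_A\phi$. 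Thus MA-positivity of $\Theta_h$ makes $\mathcal{L}$ elliptic with a definite sign, and the same integration by parts shows $\mathcal{L}$ is formally self-adjoint; it is therefore a Fredholm operator of index zero with $\mathrm{coker}\,\mathcal{L}\cong\ker\mathcal{L}$.

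Next I would pin down the kernel and cokernel. The displayed quadratic form shows that $\mathcal{L}\phi=0$ forces $\bar\partial_A\phi=0$, i.e. $\phi$ is a holomorphic, $h$-Hermitian endomorphism of $E$. Being self-adjoint it is pointwise diagonalisable with real eigenvalues that are holomorphic, hence constant on the compact $M$, and its eigenbundles are then holomorphic subbundles splitting $E$. Since $E$ is indecomposable, $\phi$ has a single eigenvalue and $\ker\mathcal{L} = \mathbb{R}\cdot Id$. By self-adjointness the cokernel is one-dimensional as well, spanned in the $L^2$ pairing by $Id$, so $\mathrm{Im}\,\mathcal{L} = \{\Psi : \int_M\mathrm{tr}\,\Psi = 0\}$, which is the differentiated form of the topological constraint.

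Finally I would run the implicit function theorem on complementary slices: restrict the domain to Hermitian $\phi$ with $\int_M\mathrm{tr}\,\phi\,\eta = 0$, gauge-fixing away the scalar kernel, and the data to volume forms $\alpha$ near $\eta$ with $\int_M\alpha=\int_M\eta$. On these slices $\mathcal{L}$ is a linear isomorphism between the relevant Hölder spaces by the Schauder and Fredholm theory for self-adjoint elliptic operators, so the implicit function theorem produces, for each such $\alpha$ close to $\eta$, a unique nearby $\phi_\alpha$ with $F(\phi_\alpha)=\alpha\, Id$; elliptic regularity bootstraps $\phi_\alpha$ to be smooth, and undoing the gauge-fixing gives uniqueness up to the rescaling $h\mapsto e^c h$. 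The hard part will be the conceptual step of the second paragraph, namely verifying rigorously that MA-positivity is precisely the condition rendering $\mathcal{L}$ a positive, self-adjoint, elliptic operator; in particular one must justify the integration by parts through the Bianchi identity so that the curvature-derivative terms reassemble into the symmetric MA-positivity quadratic form rather than spoiling self-adjointness. By comparison the kernel argument and the standard implicit-function-theorem and regularity package are routine.
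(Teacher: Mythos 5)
Your proposal is correct and follows essentially the same route as the paper: linearise the vbMA operator on a normalised slice of Hermitian endomorphisms, observe that MA-positivity applied to $a=\bar\partial_A\phi$ (and to $a=\xi\,Id$ for ellipticity) makes the linearisation a positive, symmetric, elliptic operator, kill the kernel using indecomposability of $E$, and conclude by the Fredholm alternative/implicit function theorem with elliptic bootstrapping. The only differences are cosmetic — you use the exponential parametrisation $he^{\phi}$ instead of $h_0H$, and you spell out details (the Bianchi-identity justification of the integration by parts, the identification of the cokernel with $\mathbb{R}\cdot Id$ and of the image with trace-integral-zero data) that the paper leaves implicit.
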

\begin{proof}
\indent If we fix a metric $h_0$, then every other metric is $h=h_0 H$ for some $h_0$-Hermitian invertible endomorphism $H$. Let $\mathcal{B}$ be the Banach manifold consisting of $C^{2,\gamma}$ metrics $h$ satisfying $\displaystyle \int \mathrm{tr}(H) \omega^n =1$ for some fixed K\"ahler form $\omega$.  Therefore the tangent space at $H$ consists of $h_0$-Hermitian endomorphisms $g$ satisfying $\displaystyle \int \mathrm{tr}(g) \omega^n =0$. Let $\mathcal{C}$ be the Banach manifold of $C^{0,\gamma}$ top-form valued endomorphisms $u$ satisfying $\displaystyle r\int \mathrm{tr}(u)  =n! \int\mathrm{ch}_n(E)$. Consider the map $T :\mathcal{B} \rightarrow \mathcal{C}$ given by $T(h)=\left ( \frac{i\Theta_h}{2\pi} \right)^n-\eta Id$. We calculate the derivative below at a point $h_0$ where $h_0$ is MA-positive.
\begin{gather}
DT_{h_0}(g) =\displaystyle \sum_{k=0}^{n-1} \Big (\frac{i\Theta_0}{2\pi}\Big) ^{k} \frac{i\bar{\partial}\partial_h g}{2\pi} \Big (\frac{i\Theta_0}{2\pi}\Big) ^{n-1-k}
\label{eqnlinearisation}
\end{gather}
Using $a=\xi Id$ (where $\xi$ is a $(1,0)$-form) in the definition of MA-positivity, we see that the above operator is elliptic. It is clearly symmetric too. Thus, if we prove that $ker(DT)_h$ is trivial we will be done by the Fredholm alternative. Indeed, suppose $g\in  ker(DT_0)$. Then, multiplying \ref{eqnlinearisation} by $g$, taking trace, and integrating-by-parts we see that
\begin{gather}
0 = \Omega (\partial_0 g, \bar{\partial} g),
\end{gather}
and hence by the MA-positivity condition, $\partial_0 g=0$. Since $g$ is Hermitian, $\nabla_0 g=0$. Diagonalising $g$ we see that since $E$ is indecomposable, $g=\lambda I$ for some real $\lambda$. By normalisation $\lambda=0$.
\end{proof}
\indent Before we proceed further, we recall the various definitions of positivity. 
\begin{definition}
Let $\Theta \in \Lambda^{1,1}(End(E))$ be the curvature endomorphism of the Chern connection of a Hermitian metric $h$ on a holomorphic vector bundle $E$ over a complex manifold $M$. Let $p\in M$ be a point. Then $\Theta(p)$ is said to be
\begin{enumerate}
\item Griffiths positive if $\Theta(p)_{i\bar{j}\beta}^{\alpha}v^{\beta} h_{\alpha \bar{\gamma}} \bar{v}^{\gamma} w^i \bar{w}^j \geq 0$ for all vectors $v\in E_p$ and $w\in T_p ^{1,0}M$, with equality holding iff $v^{\alpha} w^{i} =0 \ \forall \ \alpha,i$.
\item Nakano positive if $\Theta(p)_{i\bar{j}\beta}^{\alpha} a^{i\beta} \bar{a}^{j\gamma} h_{\alpha \bar{\gamma}} \geq 0$ for all tensors $a^{i\beta} \in T^{1,0}_pM \otimes E_p$, with equality holding iff $a^{i\beta}=0 \ \forall i,\beta$.
\item dual Nakano positive if $\Theta(p)_{i\bar{j}\beta}^{\alpha} a^{j\beta} \bar{a}^{i\gamma} h_{\alpha \bar{\gamma}} \geq 0$ for all tensors $a^{i\beta} \in T^{1,0}_pM \otimes E_p$, with equality holding iff $a^{j\beta}=0 \ \forall j,\beta$.
\end{enumerate}
\label{griffithsandnakano}
\end{definition}
The following lemma sheds a little light on the mysterious MA-positivity condition.
\begin{lemma}
The following hold when $n=2$ (on a complex surface).
\begin{enumerate}
\item Nakano-and-dual-Nakano positivity implies MA-positivity.
\item MA-positivity implies Griffiths positivity.
\end{enumerate}
\label{MApositivityandothers}
\end{lemma}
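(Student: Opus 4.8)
The plan is to prove both implications by direct computation on a complex surface, exploiting the fact that when $n=2$ the MA-positivity sum in \ref{MApositivitydefinitionequation} has only two terms ($k=0$ and $k=1$), which by the cyclic property of the trace combine into
\begin{gather}
i\,\mathrm{tr}\Big( a^{\dag} a \frac{i\Theta}{2\pi} + a^{\dag}\frac{i\Theta}{2\pi} a \Big). \nonumber
\end{gather}
First I would fix a point $p\in M$ and work in a local unitary frame that diagonalises $\frac{i\Theta}{2\pi}$ in a convenient way, writing $a = \sum_\mu a_\mu\, d\bar z^\mu$ for endomorphism-valued coefficients $a_\mu$. The key is to pair this up with the definitions of Nakano and Griffiths positivity. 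Recall that Griffiths positivity asks $i\Theta(v,\bar v)(s,s)>0$ for all nonzero $v\in T^{1,0}M$ and $s\in E$, while Nakano positivity asks the stronger condition that the associated Hermitian form on $T^{1,0}M\otimes E$ be positive on all tensors, not merely decomposable ones.

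For part (1), the plan is to rewrite the MA-positivity expression as a Hermitian form evaluated on the tensor built from the coefficients of $a$, and to recognize that form as (a sum of) the Nakano form. On a surface the two terms of the sum correspond to the two ways of contracting the $(0,1)$-index of $a$ against the $(1,1)$-form $\Theta$; expanding in coordinates, the quantity becomes a positive combination of the components $i\Theta_{\alpha\bar\beta}{}^{i}{}_{j}$ contracted against $\overline{a^{\,j}_{\bar\alpha}}\, a^{\,i}_{\bar\beta}$-type terms, which is precisely the Nakano Hermitian form applied to the element $\sum a^{i}_{\bar\alpha}\,\partial_\alpha\otimes e_i$. Hence Nakano positivity forces each such contraction to be positive, and summing preserves positivity, giving MA-positivity. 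I would carry out the index bookkeeping carefully to confirm that no ``off-diagonal'' cross terms spoil the positivity.

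For part (2), the strategy is to specialise the MA-positivity inequality to rank-one (decomposable) test forms: take $a = \xi\, s\otimes t^{\dag}$, i.e. an endomorphism of the form $v\mapsto \langle v,t\rangle\, s$ tensored with a $(0,1)$-form $\xi$, so that $a$ is built from a single direction $s\in E$ and a single covector. Plugging such an $a$ into \ref{MApositivitydefinitionequation} collapses the trace to a scalar expression proportional to $i\Theta(\xi^\sharp,\overline{\xi^\sharp})$ evaluated against $s$, and the positivity of this scalar for all choices of $s$ and $\xi$ is exactly Griffiths positivity. The main obstacle will be the second implication rather than the first: Griffiths positivity is a condition on decomposable tensors only, so I must verify that the rank-one test endomorphisms genuinely exhaust the Griffiths directions and that the surviving scalar is $i\Theta(v,\bar v)(s,s)$ up to a positive factor, with the correct sign emerging from the factor of $i$ and the $a^{\dag}$ adjoint. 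The delicate point is keeping track of the interplay between the $(0,1)$-form part and the endomorphism part of $a$ under the adjoint $a^{\dag}$, since it is this coupling that distinguishes the Nakano, MA-, and Griffiths conditions; once the dictionary between test forms $a$ and tensors in $T^{1,0}M\otimes E$ is pinned down, both implications follow by inspection.
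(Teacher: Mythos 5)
Your plan for part (1) has a genuine gap, and it sits exactly at the step you defer (``carry out the index bookkeeping carefully''). The element you propose to feed into the Nakano form, $\sum a^{i}_{\bar\alpha}\,\partial_\alpha\otimes e_i$, carries only \emph{one} bundle index, i.e.\ you have implicitly treated $a$ as an $E$-valued $(0,1)$-form; but in Definition~\ref{MApositivitydefinition} $a$ is $\mathrm{End}(E)$-valued, so its coefficients carry two bundle indices and the trace in \ref{MApositivitydefinitionequation} can close up either one of them. When the expansion is actually performed in a normal frame (as in the paper's proof), writing $i\Theta = A\,idz^1\wedge d\bar z^1 + C\,idz^2\wedge d\bar z^2 + B\,idz^1\wedge d\bar z^2 + B^{\dag}\,idz^2\wedge d\bar z^1$ and $a^{\dag}=\alpha\,dz^1+\beta\,dz^2$, the two terms of the MA sum yield two structurally \emph{different} families of quadratic forms: contraction over the columns of $\alpha,\beta$ produces evaluations of the genuine Nakano matrix $T=\left[\begin{smallmatrix} A & B^{\dag} \\ B & C \end{smallmatrix}\right]$, while contraction over the rows produces evaluations of the swapped matrix $\left[\begin{smallmatrix} A & B \\ B^{\dag} & C \end{smallmatrix}\right]$ --- both appear explicitly in \ref{nakposimplies}. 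The swapped matrix is the one governing dual-Nakano-type positivity, and its positivity is not a formal consequence of $T>0$: by Schur complements, when $A=I$ the condition $T>0$ reads $C>BB^{\dag}$, whereas the swapped matrix requires $C>B^{\dag}B$, and these differ for non-normal $B$. So your claim that the expression ``is precisely the Nakano Hermitian form'' applied to a tensor, with no cross terms to worry about, is false as stated; under the most charitable reading it accounts for only half of the terms, and your proposal contains no idea for handling the row-type half. This is not a technicality --- it is the entire content of part (1).

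For part (2) your strategy (rank-one test endomorphisms) is the paper's, but your dictionary is backwards in two ways, both fixable. First, if $a$ is supported on a single $(0,1)$-direction $\bar\xi$, the wedges in \ref{MApositivitydefinitionequation} annihilate the $\xi$-components of $\Theta$, so on a surface the surviving scalar involves the curvature in the direction \emph{complementary} to $\xi$, not $i\Theta(\xi^{\sharp},\bar\xi^{\sharp})$; this is why the paper, to prove $A=\Theta(\partial_{z^1},\partial_{\bar z^1})$ is positive definite, tests with $a$ proportional to the \emph{other} direction $d\bar z^2$. (Since the complementary direction sweeps out everything as $\xi$ does, the quantifier ``for all $\xi$'' still delivers Griffiths positivity.) Second, with a general rank-one coefficient $s\otimes t^{\dag}$ the expression collapses to $\vert t\vert^2\, s^{\dag}As + \vert s\vert^2\, t^{\dag}At$, a mixture of two Griffiths values; to isolate $v^{\dag}Av>0$ you must take $t=s$, which is exactly the paper's Hermitian rank-one choice $\beta_{ij}=v_i\bar v_j$. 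So part (2) would go through after these corrections, but part (1) as proposed would not.
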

\begin{proof}
Choose a normal holomorphic frame at $p$. From now onwards we work only in this frame at this point.
\begin{enumerate}
\item  We may write the curvature as $i\Theta = A idz^1\wedge d\bar{z}^1 + C idz^2 \wedge d\bar{z}^2 + B idz^1\wedge d\bar{z}^2 + B^{\dag} idz^2 \wedge d\bar{z}^1$ where $A, B, C$ are $r\times r$ complex matrices (with $A=A^{\dag}$, $C=C^{\dag}$). Nakano positivity is easily seen to be equivalent to the $2r \times 2r$ matrix 
\begin{gather}
T = \left [ \begin{array}{cc} A & B^{\dag} \\ B & C \end{array} \right ]
\label{Tmatrix}
\end{gather}
being positive-definite. Likewise, dual Nakano positivity is equivalent to the positivity of 
\begin{gather}
T' = \left [ \begin{array}{cc} A & B \\ B^{\dag} & C \end{array} \right ].
\label{T'matrix}
\end{gather}
Now suppose $a^{\dag}$ is an $r\times r$ matrix of $(1,0)$ forms given by $a^{\dag}=\alpha dz^1 + \beta dz^2$ where $\alpha, \beta$ are $r\times r$ matrices of complex numbers. Assume that $\alpha_l, \beta_l$ and $\alpha^l, \beta^l$ are the $l^{th}$ columns and rows of $\alpha, \beta$ respectively. Then we see that
\begin{gather}
\frac{\mathrm{tr} \left (ia^{\dag} \left ( i\Theta \right ) a \right )+\mathrm{tr} \left (ia^{\dag}  a  \left ( i\Theta \right )\right )}{idz^1d\bar{z}^1 idz^2 d\bar{z}^2} = \mathrm{tr}(\alpha C \alpha^{\dag})+\mathrm{tr}(\alpha \alpha^{\dag}C)+\mathrm{tr}(\beta A \beta^{\dag})+\mathrm{tr}(\beta  \beta^{\dag} A) \nonumber \\
-\mathrm{tr}(\alpha B^{\dag} \beta^{\dag})-\mathrm{tr}(\alpha  \beta^{\dag} B^{\dag}) -\mathrm{tr}(\beta B \alpha^{\dag}) - \mathrm{tr}(\beta  \alpha^{\dag} B).
\end{gather}
Note that $\mathrm{tr}(\alpha C \alpha^{\dag}) = \alpha_1 C \alpha_1 ^{\dag} + \alpha_2 C \alpha_2 ^{\dag}$ and likewise $\mathrm{tr}(\alpha^{\dag} B \beta) =  \alpha_1^{\dag} B \beta_1 + \alpha_2^{\dag} B \beta_2 $. Therefore using the assumption that $T$ and $T'$ are positive-definite,
\begin{gather}
\frac{\mathrm{tr} \left (ia^{\dag} \left ( i\Theta \right ) a \right )+\mathrm{tr} \left (ia^{\dag}  a  \left ( i\Theta \right )\right )}{idz^1d\bar{z}^1 idz^2 d\bar{z}^2} = \displaystyle \sum_{l=1}^2 \left [ \begin{array}{cc} \beta_l^{\dag} & -\alpha^{\dag}_l \end{array} \right ] \left [ \begin{array}{cc} A & B^{\dag} \\ B & C \end{array} \right ] \left [\begin{array}{c} \beta_l \\ -\alpha_l \end{array} \right ] \nonumber \\
+ \displaystyle \sum_{l=1}^2 \left [ \begin{array}{cc} \beta^l & -\alpha^l \end{array} \right ] \left [ \begin{array}{cc} A & B \\ B^{\dag} & C \end{array} \right ] \left [\begin{array}{c} (\beta^l)^{\dag} \\ -(\alpha^l)^{\dag} \end{array} \right ] \geq 0,  
\label{nakposimplies}
\end{gather}
with equality holding if and only if $a=0$. Hence Nakano-and-dual-Nakano positivity implies MA-positivity for complex surfaces.
\item Griffiths positivity means that $\Theta (\xi, \bar{\xi})$ is a positive-definite matrix for all co-vectors $\xi\neq 0$. Given a $\xi$, choose coordinates so that $\xi=\frac{\partial}{\partial z^1}$. So we need to prove that $A$ is a positive-definite matrix, i.e., $v^{\dag} A v >0$ for all $v \neq 0$. Indeed, choose $a=dz^2 \beta$. Then 
\begin{gather}
\frac{\mathrm{tr} \left (ia^{\dag} \left ( i\Theta \right ) a \right )+\mathrm{tr} \left (ia^{\dag}  a  \left ( i\Theta \right )\right )}{idz^1d\bar{z}^1 idz^2 d\bar{z}^2}= \mathrm{tr}(\beta A \beta^{\dag})+\mathrm{tr}(\beta^{\dag} A \beta) >0,
\end{gather}
if $\beta \neq 0$. Suppose $\beta_{ij} =  v_i \bar{v}_j$. Then $\beta = \beta^{\dag}$ and hence
\begin{gather}
0< \mathrm{tr}(\beta A \beta^{\dag})+\mathrm{tr}(\beta^{\dag} A \beta) = 2\sum_{i} v_i \bar{v}_j A_{jk} v_k \bar{v}_i = 2 \vert v \vert^2 v^{\dag} A v.
\end{gather}  
Hence for surfaces, MA-positivity implies Griffiths positivity.
\end{enumerate}
\end{proof}
\indent In order to prove existence results, one typically uses the method of continuity. The following potentially useful lemma shows that for certain kinds of continuity paths Griffiths positivity is preserved.
\begin{lemma}
Let $h(t)$ (where $t\in[0,1]$) denote a path of metrics on a holomorphic vector bundle $E$ over a compact complex surface $M$ such that $\Theta_{t=0}$ is Griffiths positive, and $\frac{(i\Theta_{t})^2}{\eta}>0$ as positive-definite endomorphisms (where $\eta$ is any fixed volume form). Then $\Theta_t$ is Griffiths positive for all $t\in[0,1]$.
\label{gpispreserved}
\end{lemma}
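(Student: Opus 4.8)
The plan is to argue by connectedness in the parameter $t$, reducing the preservation of Griffiths positivity to a single pointwise linear-algebra inequality that is forced by the hypothesis $(i\Theta_t)^2/\eta>0$. First I would record the pointwise algebra. Fix $t$ and a point $p$, choose a normal holomorphic frame and holomorphic coordinates, and write, as in the proof of Lemma \ref{MApositivityandothers},
\[ i\Theta_t = A\, i\,dz^1\wedge d\bar z^1 + C\, i\,dz^2\wedge d\bar z^2 + B\, i\,dz^1\wedge d\bar z^2 + B^\dag\, i\,dz^2\wedge d\bar z^1, \]
with $A=A^\dag$ and $C=C^\dag$. The same bookkeeping of the wedge products $i\,dz^j\wedge d\bar z^k$ gives
\[ (i\Theta_t)^2 = \left(AC + CA - BB^\dag - B^\dag B\right)\Omega_0, \qquad \Omega_0 := i\,dz^1\wedge d\bar z^1\wedge i\,dz^2\wedge d\bar z^2. \]
Since $\eta$ and $\Omega_0$ are positive volume forms, the hypothesis $(i\Theta_t)^2/\eta>0$ says exactly that $AC + CA - BB^\dag - B^\dag B$ is positive-definite in every such frame, at every point, for every $t\in[0,1]$. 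Recall also that Griffiths positivity of $\Theta_t$ means that $M_t(\xi):=|\xi_1|^2 A + |\xi_2|^2 C + \xi_1\bar\xi_2 B + \bar\xi_1\xi_2 B^\dag$ is positive-definite for every $\xi\neq 0$.

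Next I would set $S=\{t\in[0,1] : \Theta_t \text{ is Griffiths positive}\}$. As $M$ is compact and $(t,p,\xi,v)\mapsto \langle M_t(\xi)v,v\rangle$ is continuous, Griffiths positivity at time $t$ is equivalent to the minimum of this function over the compact set $\{|\xi|=|v|=1\}$ being strictly positive; this minimum depends continuously on $t$, so $S$ is open, and $0\in S$ by hypothesis. Suppose, for contradiction, that $S\neq[0,1]$; then by connectedness $S$ is not closed, so there is a first-failure time $t_0\in\overline S\setminus S$. Passing to the limit in the strict inequalities for $t\in S$ near $t_0$ shows $M_{t_0}(\xi)\geq 0$ for all $\xi$ at all points, while $t_0\notin S$ forces the minimum at $t_0$ to equal $0$, attained at some $p_0$, $\xi_0\neq 0$ and $v_0\neq 0$. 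Since $M_{t_0}(\xi_0)\geq 0$ and $\langle M_{t_0}(\xi_0)v_0,v_0\rangle=0$, we conclude $M_{t_0}(\xi_0)v_0=0$.

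Finally I would extract the contradiction. Choosing coordinates at $p_0$ with $\xi_0=\partial/\partial z^1$ together with a normal frame, the degeneracy becomes $A v_0=0$. Using $A=A^\dag$, this annihilates the $AC$ and $CA$ contributions, so
\[ \langle (AC + CA - BB^\dag - B^\dag B)v_0,\, v_0\rangle = -\|B^\dag v_0\|^2 - \|B v_0\|^2 \leq 0, \]
contradicting the positive-definiteness established in the first paragraph, which holds at $t=t_0$ by the standing hypothesis. Hence $S=[0,1]$ and $\Theta_t$ is Griffiths positive for all $t$.

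I expect the main obstacle to be the continuity step rather than the algebra: one must ensure that at the first-failure time the Griffiths form only \emph{degenerates} (acquires a kernel) rather than jumping directly to an indefinite form, so that the vector $v_0$ genuinely lies in $\ker M_{t_0}(\xi_0)$. This is precisely what the limiting inequality $M_{t_0}(\xi)\geq 0$ secures; once it is in hand, the computation $\langle(i\Theta_{t_0})^2/\eta\cdot v_0,\,v_0\rangle\le 0$ is the decisive, and entirely elementary, input.
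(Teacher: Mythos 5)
Your proof is correct and follows essentially the same route as the paper's: a first-failure argument in $t$, the observation that at the degeneration time the Griffiths form is positive semidefinite so the null vector $v_0$ satisfies $Av_0=0$, and the contradiction $0 \geq -\Vert Bv_0\Vert^2 - \Vert B^\dag v_0\Vert^2$ against the hypothesis $(i\Theta_{t_0})^2/\eta > 0$ via the identity $(i\Theta)^2 = (AC+CA-BB^\dag-B^\dag B)\,\Omega_0$. The only difference is that you spell out the compactness/continuity justification for the existence of the first-failure time and the semidefiniteness at that time, which the paper leaves implicit.
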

\begin{proof}
As before, it is enough to prove that $\Theta_t (\xi, \bar{\xi})$ is positive-definite where by a change of coordinates we may assume that $\xi=\frac{\partial}{\partial z^1}$. Let $T$ be the first $t$ such that $v^{\dag}\Theta_t (\xi, \bar{\xi})v=0$ for some $v$. Using the same notation as before, note that
\begin{gather}
(i\Theta)^2 = idz^1 d\bar{z}^1 dz^2 d\bar{z}^2 (AC+CA-BB^{\dag}-B^{\dag}B)>0
\end{gather}
Thus, $v^{\dag}(AC+CA)v>v^{\dag}(BB^{\dag}+B^{\dag} B)v$. Since $v^{\dag}\Theta_t (\xi, \bar{\xi})v=0$, we see that $Av=0$ ($A$ is positive semidefinite). So $0>\Vert B v\Vert^2 + \Vert B^{\dag} v\Vert^2$ which is impossible. Hence Griffiths positivity is preserved.
\end{proof}
\indent It is easy to see that the linearization of the vector bundle Monge-Amp\`ere equation is elliptic if and only if Griffiths positivity holds. So the point of Lemma \ref{gpispreserved} is that for most reasonable continuity paths, ellipticity is preserved.
\section{Consequences of existence}\label{stabilitysection}
\indent In this section we prove two consequences of existence of positively curved solutions to the vbMA equation, and thus prove Theorem \ref{stabilityandChernclassinequality}.\\
\subsection{MA Stability}
\begin{lemma}
Let $E$ denote an indecomposable holomorphic rank-2 vector bundle on a smooth projective surface $M$. Le $\eta>0$ be a given volume form. If there exists a smooth metric $h$ such that $(i\Theta_h)^2  = \eta Id$ and $tr(i\Theta_h)$ is positive, then for every holomorphic subbundle $S$, $\mu_{MA}(S) < \mu_{MA}(E)$.
\label{forsubbundles}
\end{lemma}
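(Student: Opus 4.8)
The plan is to reduce to rank-one subbundles and to compare $\mathrm{ch}_2$ of $S$ with that of the quotient $Q=E/S$ through the second fundamental form. Since $E$ has rank $2$, a proper holomorphic subbundle $S$ is a line bundle, and $Q=E/S$ is also a line bundle, fitting into an exact sequence $0\to S\to E\to Q\to 0$. By additivity of the Chern character on this sequence we have $\int_M\mathrm{ch}_2(E)=\int_M\mathrm{ch}_2(S)+\int_M\mathrm{ch}_2(Q)$, and since $\mathrm{rk}(S)=\mathrm{rk}(Q)=1$ while $\mathrm{rk}(E)=2$, a short computation with the slope gives
\[
\mu_{MA}(E)-\mu_{MA}(S)=\tfrac{1}{2}\int_M\big(\mathrm{ch}_2(Q)-\mathrm{ch}_2(S)\big).
\]
Thus it suffices to show that the right-hand integral is strictly positive. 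I would equip $S$ and $Q$ with the metrics induced by $h$ via the smooth orthogonal splitting $E\cong S\oplus S^{\perp}$, with $S^{\perp}\cong Q$.

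Next I would write $i\Theta_h$ in this split unitary frame as $\begin{pmatrix}\alpha & \gamma\\ \gamma^{\dag}&\delta\end{pmatrix}$, where $\alpha=i(\Theta_h)_{SS}$ and $\delta=i(\Theta_h)_{QQ}$ are real $(1,1)$-forms and $\gamma$ is the off-diagonal $(1,1)$-block. The Gauss--Codazzi equations for the induced metrics read $i\Theta_S=\alpha-\psi$ and $i\Theta_Q=\delta+\psi$, where $\psi=i\beta\wedge\beta^{\dag}\ge 0$ is the nonnegative $(1,1)$-form built from the second fundamental form $\beta\in A^{1,0}(\mathrm{Hom}(S,Q))$ (here one uses that for line bundles $i\beta^{\dag}\wedge\beta=-\psi$). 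The decisive input is the vbMA equation: reading off the two diagonal blocks of $(i\Theta_h)^2=\eta\,Id$ gives $\alpha^2+\gamma\wedge\gamma^{\dag}=\eta$ and $\delta^2+\gamma^{\dag}\wedge\gamma=\eta$. Because the blocks are rank one, $\gamma\wedge\gamma^{\dag}$ and $\gamma^{\dag}\wedge\gamma$ are equal scalar $(2,2)$-forms, so $\alpha^2=\delta^2$ pointwise. Expanding and using this cancellation (the $\delta^2-\alpha^2$ and $\psi\wedge\psi$ terms both drop out),
\[
\mathrm{ch}_2(Q)-\mathrm{ch}_2(S)=\tfrac{1}{2(2\pi)^2}\big[(\delta+\psi)^2-(\alpha-\psi)^2\big]=\tfrac{1}{(2\pi)^2}\,\psi\wedge(\alpha+\delta).
\]

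Finally I would invoke the hypotheses. By assumption $\mathrm{tr}(i\Theta_h)=\alpha+\delta>0$ is a positive $(1,1)$-form, and $\psi\ge 0$, so $\psi\wedge(\alpha+\delta)$ is a nonnegative $(2,2)$-form and $\int_M(\mathrm{ch}_2(Q)-\mathrm{ch}_2(S))\ge 0$. For strictness I would use indecomposability: if $\beta\equiv 0$ then $S^{\perp}$ would be a holomorphic subbundle, producing a holomorphic splitting $E\cong S\oplus S^{\perp}$ and contradicting indecomposability; hence $\beta\not\equiv 0$, so $\psi>0$ on a nonempty open set and the integral is strictly positive. This gives $\mu_{MA}(S)<\mu_{MA}(E)$. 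Note that only the trace positivity $\mathrm{tr}(i\Theta_h)>0$ is used, not full Griffiths positivity. I expect the main obstacle to be the bookkeeping in the Gauss--Codazzi step: pinning down the signs so that $\psi\ge 0$, and above all verifying the rank-one identity $\gamma\wedge\gamma^{\dag}=\gamma^{\dag}\wedge\gamma$ that forces $\alpha^2=\delta^2$; once this is in place the conclusion is a one-line positivity argument.
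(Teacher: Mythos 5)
Your proof is correct and is essentially the paper's own argument: both decompose the curvature in a frame adapted to $S$ and its quotient $Q$ via the second fundamental form, read off the two diagonal components of $(i\Theta_h)^2=\eta\, Id$, use the fact that the two off-diagonal cross terms are equal scalar $(2,2)$-forms so that they cancel, and identify $\mu_{MA}(E)-\mu_{MA}(S)$ with a positive multiple of $\displaystyle \int_M \mathrm{tr}(i\Theta_h)\wedge i\beta^{\dag}\wedge\beta \geq 0$, with strictness coming from indecomposability forcing $\beta \not\equiv 0$. The only difference is bookkeeping: you subtract the two block equations first (obtaining the pointwise identity $\alpha^2=\delta^2$) and then expand via Gauss--Codazzi, whereas the paper expands each block in terms of $\Theta_S$, $\Theta_Q$, $\beta$ first and then subtracts and integrates, invoking the Whitney formula for $\mathrm{ch}_2$ exactly as you do.
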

\begin{proof}
Suppose $Q$ is the quotient line bundle and $\beta$ is the second fundamental form. Then the curvature looks like the following
\begin{gather}
\Theta = \left [ \begin{array} {cc} \Theta_S-\beta \wedge \beta^{\dag} & \nabla^{1,0}\beta \\ -\nabla^{0,1} \beta^{\dag} & \Theta_Q-\beta^{\dag}\wedge \beta \end{array}\right ].
\end{gather}
Since $(i\Theta)^2 =\eta Id$, we see that 
\begin{gather}
\mathrm{tr}(i^2(\Theta_S - \beta \wedge \beta^{\dag})^2) - i^2\mathrm{tr}(\nabla^{1,0}\beta \wedge \nabla^{0,1}\beta^{\dag}) = \eta \nonumber \\
\Rightarrow 2(2\pi)^2ch_2(S)-2i^2\mathrm{tr}(\Theta_S \beta \wedge \beta^{\dag})-i^2\mathrm{tr}(\nabla^{1,0}\beta \wedge \nabla^{0,1}\beta^{\dag}) = \eta
,\label{thetasequation}
\end{gather}
and likewise,
\begin{gather}
\mathrm{tr}(i^2(\Theta_Q - \beta^{\dag} \wedge \beta)^2) - i^2\mathrm{tr}(\nabla^{1,0}\beta \wedge \nabla^{0,1}\beta^{\dag}) = \eta \nonumber \\
\Rightarrow 2(2\pi)^2 ch_2(Q)-2i^2\mathrm{tr}(\Theta_Q \beta^{\dag} \wedge \beta)-i^2\mathrm{tr}(\nabla^{1,0}\beta \wedge \nabla^{0,1}\beta^{\dag}) = \eta.
\label{thetaqequation}
\end{gather}
Using \ref{thetasequation} and \ref{thetaqequation}, and integrating we get,
\begin{gather}
\displaystyle \int_M (2\pi)^2 ch_2(S) + \frac{1}{2}\int_M i^2\mathrm{tr}((\Theta_Q+\Theta_S)\beta^{\dag}\wedge \beta) = \frac{1}{2}\int_M (2\pi)^2ch_2(E)\nonumber \\
\Rightarrow \displaystyle \int_M (2\pi)^2 ch_2(S) + \frac{1}{2}\int_M i2\pi\mathrm{tr}(c_1(E)\beta^{\dag}\wedge \beta) = \frac{1}{2}\int_M (2\pi)^2 ch_2(E).
\label{finalgriffithspositive}
\end{gather}
The given positivity condition is $c_1(E)>0$ and hence \ref{finalgriffithspositive} implies that $\mu_{MA}(S) \leq \mu_{MA}(E)$ with equality holding if and only if $E$ is decomposable.  Hence $\mu_{MA}(S) < \mu_{MA}(E)$.
\end{proof}
\indent In order to prove a version of Lemma \ref{forsubbundles} for coherent saturated subsheaves, we need to do more work (akin to \cite{richard, adam, sibley}). 

\begin{proposition}
If $(E,h)$ is an indecomposable Hermitian holomorphic rank-2 vector bundle on a smooth projective surface $M$ such that $(i\Theta_h)^2=\eta Id$ where $\eta>0$, and $\mathrm{tr}(i\Theta_h)$ is positive, then $E$ is MA-stable.
\label{stability}
\end{proposition}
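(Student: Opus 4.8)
The plan is to bootstrap Lemma~\ref{forsubbundles}, which already settles the case of a genuine holomorphic subbundle, to an arbitrary coherent subsheaf by passing to a blow-up on which the subsheaf becomes a subbundle. Since $E$ is locally free, hence torsion-free, every rank-$0$ subsheaf vanishes, and a full-rank proper subsheaf differs from $E$ only by a torsion quotient; the essential case is therefore a coherent subsheaf $S\subset E$ of rank $1$, for which $\mu_{MA}(S)=\int_M \mathrm{ch}_2(S)$ and $\mu_{MA}(E)=\tfrac12\int_M\mathrm{ch}_2(E)$. Thus I must establish the strict inequality $\int_M\mathrm{ch}_2(S)<\tfrac12\int_M\mathrm{ch}_2(E)$.

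First I would invoke Proposition~\ref{relationbetweenchernprop} to produce a birational $\pi:\tilde M\to M$ (a finite composition of blow-ups) together with a genuine subbundle $\tilde S\subset\pi^*E$ that agrees with $\pi^*S$ off a normal-crossing divisor, along with the identity $\int_M\mathrm{ch}_2(S)=\int_{\tilde M}\mathrm{ch}_2(\tilde S)-\sum_i m_{Z_{i,0}}$ in which all the multiplicities $m_{Z_{i,0}}$ are nonnegative. Next I pull back the metric: $\pi^*h$ has curvature $\pi^*\Theta_h$ satisfying $(i\pi^*\Theta_h)^2=(\pi^*\eta)\,\mathrm{Id}$, while $c_1(\pi^*E)=\pi^*c_1(E)=\tfrac{1}{2\pi}\pi^*\mathrm{tr}(i\Theta_h)$ is a semipositive $(1,1)$-form that is strictly positive on the dense open complement of the exceptional locus. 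The Gauss--Codazzi computation of Lemma~\ref{forsubbundles} now applies verbatim to the smooth subbundle $\tilde S\subset\pi^*E$, since it uses only the vbMA equation and an integration by parts, and the (now degenerate) volume term $\pi^*\eta$ cancels when \ref{thetasequation} and \ref{thetaqequation} are combined. This yields
\[
\int_{\tilde M}\mathrm{ch}_2(\tilde S)+\frac{1}{2(2\pi)}\int_{\tilde M} i\,\mathrm{tr}\!\big(c_1(\pi^*E)\,\tilde\beta^{\dag}\wedge\tilde\beta\big)=\frac12\int_{\tilde M}\mathrm{ch}_2(\pi^*E)=\frac12\int_M\mathrm{ch}_2(E),
\]
where $\tilde\beta$ is the second fundamental form of $\tilde S$. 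Because $c_1(\pi^*E)\ge 0$, the middle term is nonnegative, so $\int_{\tilde M}\mathrm{ch}_2(\tilde S)\le\tfrac12\int_M\mathrm{ch}_2(E)$; combining this with the Chern-character identity and $m_{Z_{i,0}}\ge0$ gives $\mu_{MA}(S)\le\mu_{MA}(E)$.

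The main obstacle I anticipate is the strict inequality. Equality would force both $\sum_i m_{Z_{i,0}}=0$ and the vanishing of the nonnegative Gauss--Codazzi integral. Since $c_1(\pi^*E)$ is strictly positive off the exceptional locus, vanishing of that integral forces $\tilde\beta\equiv 0$ on a dense open set, hence everywhere by continuity. Then the orthogonal projection $\tilde p$ onto $\tilde S$ is a parallel, in particular holomorphic, idempotent endomorphism of $\pi^*E$, giving a holomorphic splitting $\pi^*E=\tilde S\oplus\tilde S^{\perp}$. To descend it I would use $\pi_*\mathcal{O}_{\tilde M}=\mathcal{O}_M$ (whence $H^0(\tilde M,\pi^*\mathrm{End}(E))=H^0(M,\mathrm{End}(E))$ by the projection formula), so that $\tilde p$ descends to a global holomorphic idempotent $p\in H^0(M,\mathrm{End}(E))$ with $p^2=p$, exhibiting $E$ as a nontrivial direct sum and contradicting indecomposability. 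The delicate points are precisely the control of the degeneracy of $\pi^*\eta$ and $\pi^*c_1(E)$ along the exceptional set (handled above by noting that they only ever enter with a favorable sign) and the fact, supplied by Proposition~\ref{relationbetweenchernprop}, that the correction between $\mathrm{ch}_2(S)$ and $\mathrm{ch}_2(\tilde S)$ involves only nonnegative zero-dimensional multiplicities; these two facts are what make both the inequality and the descent of the splitting go through cleanly.
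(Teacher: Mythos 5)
Your proposal is correct, and it is built from the same two ingredients as the paper's own proof: Proposition \ref{relationbetweenchernprop} (comparing $\mathrm{ch}_2(S)$ with that of a locally free model $\tilde S$, with nonnegative zero-dimensional corrections) and the Gauss--Codazzi computation of Lemma \ref{forsubbundles} (whose error term has a sign because $c_1>0$). The difference is in where the computation is run. The paper stays on $M$: it asserts that no blow-up is needed since the singularities of $S$ are already points, takes $\tilde S$ to be the saturation of $S$ in $E$, quotes \ref{relationbetweentildeSandS} to get $\mu_{MA}(S)\le\mu_{MA}(\tilde S)$, and applies Lemma \ref{forsubbundles} to $\tilde S$ directly, with strictness coming from that lemma's equality clause. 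You instead pass to the blow-up $\pi:\tilde M\to M$, where $\tilde S$ is an honest subbundle of $\pi^*E$, rerun the computation \ref{thetasequation}--\ref{finalgriffithspositive} for the pulled-back metric --- observing correctly that the degenerate $\pi^*\eta$ cancels and that the merely semipositive $\pi^*c_1(E)$ enters only with a favorable sign --- and in the equality case descend the resulting holomorphic splitting of $\pi^*E$ to $M$ via $\pi_*\mathcal{O}_{\tilde M}=\mathcal{O}_M$ to contradict indecomposability. Your longer route buys genuine rigor at a point the paper elides: a saturated rank-one subsheaf of a rank-two bundle on a surface need \emph{not} be a subbundle (the kernel of the evaluation $\mathcal{O}^{\oplus 2}\to I_p$, $(f,g)\mapsto fz+gw$, is saturated yet degenerates at $p$); it is a subbundle only away from finitely many points, so the paper's direct application of the subbundle Lemma \ref{forsubbundles} to $\tilde S$ on $M$ is strictly speaking an abuse of that lemma, and your blow-up, degenerate-data bookkeeping, and descent argument are exactly what make that step honest. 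One caveat you share with the paper: the reduction to rank-one subsheaves (your dismissal of the full-rank case) is a convention implicit in Definition \ref{stabilitycondition} rather than something either argument establishes, since for $\mathrm{ch}_2$-slopes, unlike degree, twisting a subsheaf down by a divisor can \emph{increase} its slope.
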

\begin{proof}
Let $S\subset E$ be a coherent saturated subsheaf. By assumption, the singularities of $S$ are points (codimension $2$). We adapt the discussion following Corollary 4.2 in \cite{sibley}. Essentially, one takes the singular locus of $S$, blows it up finitely many times to get $\pi : \tilde{M} \rightarrow M$, and takes $\tilde{S}$ to be the saturation of $\pi^{*}S$ in $\pi^{*}(E)$. It is a subbundle of $E$ such that $T=\tilde{S}/\pi^{*}S$ is a torsion sheaf supported on points $\sum_i m_{Z_{i,0}}Z_{i,0}$ (by the reasoning in the proof of Proposition 4.3 in \cite{sibley}).  At this point we appeal to Proposition 3.1 of \cite{richard} which shows that $\mathrm{ch}_2(T)=PD(\displaystyle \sum_{i}m_{Z_{i,0}} Z_{i,0})$ where $PD$ is the Poincar\'e dual. Therefore, 
\begin{gather}
\pi^{*}\mathrm{ch}_2(S) = \mathrm{ch}_{2}(\tilde{S})-PD(\displaystyle \sum_{i}m_{Z_{i,0}} Z_{i,0}). 
\label{relationbetweentildeSandS}
\end{gather}
From \ref{relationbetweentildeSandS} we see that $\mu_{MA}(S) \leq \mu_{MA}(\tilde{S})$. By Lemma \ref{forsubbundles}, $\mu_{MA}(\tilde{S})<\mu_{MA}(E)$. Hence $E$ is MA-stable.
\end{proof}
\subsection{Chern class inequality}
\textbf{}\\
\indent We prove the following KLBMY-type inequality thus completing the proof of Theorem \ref{stabilityandChernclassinequality}.
\begin{proposition}
Let $(E,h)$ be a Hermitian rank-2 holomorphic vector bundle on a compact complex surface such that $i\Theta_h>0$ in the sense of Griffiths, and  $(i\Theta_h)^2 = \eta Id$ where $\eta>0$ is a volume form and $\Theta_h$ is the curvature of the Chern connection of $h$. Then $c_1^2(E) - 4c_2(E) \leq 0$ with equality holding if and only if $\Theta$ is projectively flat.
\label{KLnew}
\end{proposition}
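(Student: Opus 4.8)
The plan is to convert the statement into a pointwise algebraic inequality on the curvature and then extract it from the vbMA equation together with Griffiths positivity. First I would record the Chern--Weil identity valid for any rank-$2$ bundle on a surface: writing $F=\frac{i\Theta_h}{2\pi}$ and splitting off the trace, $F=\frac12(\mathrm{tr}F)\,Id+F_0$ with $F_0$ trace-free, one has (using $c_1=\mathrm{tr}F$, $c_2=\det F$, $\mathrm{ch}_2=\tfrac12\mathrm{tr}(F^2)$ and $\mathrm{tr}(F^2)=\tfrac12(\mathrm{tr}F)^2+\mathrm{tr}(F_0^2)$) the equality of $(2,2)$-forms $c_1^2(E)-4c_2(E)=2\,\mathrm{tr}(F_0\wedge F_0)$. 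Integrating, it suffices to prove $\int_M \mathrm{tr}(F_0\wedge F_0)\le 0$.

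Next I would pass to a pointwise description in a normal holomorphic frame, writing $i\Theta=A\,idz^1\wedge d\bar z^1+C\,idz^2\wedge d\bar z^2+B\,idz^1\wedge d\bar z^2+B^{\dagger}\,idz^2\wedge d\bar z^1$ as in Lemma~\ref{MApositivityandothers}. The same bookkeeping that produces $(i\Theta)^2=(AC+CA-BB^{\dagger}-B^{\dagger}B)\,idz^1d\bar z^1idz^2d\bar z^2$ in Lemma~\ref{gpispreserved} gives $\mathrm{tr}(F_0\wedge F_0)=\frac{2}{(2\pi)^2}\big(\mathrm{tr}(A_0C_0)-\mathrm{tr}(B_0B_0^{\dagger})\big)\,idz^1d\bar z^1idz^2d\bar z^2$, where $A_0,B_0,C_0$ are the trace-free parts. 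Thus the entire statement reduces to the pointwise inequality $\mathrm{tr}(A_0C_0)\le \mathrm{tr}(B_0B_0^{\dagger})$, the global conclusion following by integration.

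For this inequality I would use the equation as $AC+CA-BB^{\dagger}-B^{\dagger}B=e\,Id$ with $e>0$ (where $\eta=e\,idz^1d\bar z^1idz^2d\bar z^2$) together with Griffiths positivity, which makes $A,C$ positive definite. Choosing the unitary frame so that $A=\mathrm{diag}(a_1,a_2)$, the diagonal entries of the equation give $|b_{11}|^2=a_1C_{11}-\tfrac12(e+s)$ and $|b_{22}|^2=a_2C_{22}-\tfrac12(e+s)$, where $C_{11},C_{22}>0$ are the diagonal entries of $C$ and $s=|b_{12}|^2+|b_{21}|^2\ge 0$. Since $\mathrm{tr}(A_0C_0)=\tfrac12(a_1-a_2)(C_{11}-C_{22})$, a short manipulation rewrites the claim as $2\,\mathrm{Re}(b_{11}\bar b_{22})\le a_1C_{22}+a_2C_{11}-e+s$; bounding the left side by $2|b_{11}||b_{22}|$ and squaring, it is enough to verify $(a_1C_{22}-a_2C_{11})^2+2e(a_1-a_2)(C_{11}-C_{22})+2s\big(\mathrm{tr}A\,\mathrm{tr}C-2e\big)\ge 0$. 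If $(a_1-a_2)(C_{11}-C_{22})<0$ the original inequality is immediate since then $\mathrm{tr}(A_0C_0)\le 0$; in the remaining case all three summands are nonnegative, the crucial point being that $|b_{11}|^2,|b_{22}|^2\ge 0$ forces $e\le 2\min(a_1C_{11},a_2C_{22})$ and hence $\mathrm{tr}A\,\mathrm{tr}C\ge 2e$. Extracting this last bound is what I expect to be the main obstacle: the plain trace of the equation is logically equivalent to the inequality one wants and therefore useless, so the strict positivity of $\eta$ must instead be fed in through the non-negativity of the diagonal terms $|b_{ii}|^2$.

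Finally, for the equality case I would track when the chain above is an equality. This forces the phases $b_{11}\bar b_{22}=|b_{11}||b_{22}|$ and the vanishing of each nonnegative summand; since $e>0$ the middle and first terms force $a_1=a_2$ and $C_{11}=C_{22}$, i.e. $A_0=0$ and $\mathrm{tr}(A_0C_0)=0$. Then equality $\mathrm{tr}(B_0B_0^{\dagger})=\mathrm{tr}(A_0C_0)$ gives $B_0=0$, and the trace-free part of the vbMA equation, $\mathrm{tr}(C)\,A_0+\mathrm{tr}(A)\,C_0=(BB^{\dagger}+B^{\dagger}B)_0$, then forces $C_0=0$. Hence $F_0\equiv 0$, i.e. $\Theta$ is projectively flat; conversely projective flatness is exactly $F_0=0$, which makes the integrand $\mathrm{tr}(F_0\wedge F_0)$ vanish identically and yields $c_1^2(E)-4c_2(E)=0$.
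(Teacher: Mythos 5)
Your argument is correct in substance, and it takes a genuinely different route from the paper's proof of Proposition \ref{KLnew}. Both proofs are pointwise linear-algebra arguments built from the same two hypotheses, but the paper normalizes by a \emph{coordinate} change: Griffiths positivity makes the bundle-diagonal entries $i\Theta_{11},i\Theta_{22}$ positive $(1,1)$-forms, so coordinates can be chosen at $p$ in which $i\Theta_{11}$ is the standard form and $i\Theta_{22}=i\lambda_1 dz^1\wedge d\bar z^1+i\lambda_2 dz^2\wedge d\bar z^2$; the vbMA equation then yields $\lambda_1\lambda_2=1$ plus two relations among the entries of $\Theta_{12}$, and expanding $c_1^2-4c_2$ directly gives $4-2(\lambda_1+\lambda_1^{-1})-(\text{nonnegative terms})\le 0$ by AM--GM. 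You normalize instead by a \emph{gauge} change (a unitary frame diagonalizing the block $A$ of Lemma \ref{MApositivityandothers}), after first reducing the whole statement to $c_1^2-4c_2=2\,\mathrm{tr}(F_0\wedge F_0)$, i.e.\ to the pointwise inequality $\mathrm{tr}(A_0C_0)\le \mathrm{tr}(B_0B_0^{\dagger})$. That reduction is a real gain in transparency: it makes it evident from the outset that equality must mean $F_0\equiv 0$, i.e.\ projective flatness, which in the paper emerges only at the end of the computation. One remark: your closing identity $\mathrm{tr}(C)A_0+\mathrm{tr}(A)C_0=(BB^{\dagger}+B^{\dagger}B)_0$ is valid precisely because the rank is two (the anticommutator of trace-free Hermitian $2\times 2$ matrices is a multiple of the identity); alternatively, at the point where you invoke it you already know $A_0=0$, so the anticommutator term drops out regardless.

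One step as written is incomplete. ``Bounding the left side by $2|b_{11}||b_{22}|$ and squaring'' establishes $2|b_{11}||b_{22}|\le R$, where $R:=a_1C_{22}+a_2C_{11}-e+s$, only if one also knows $R\ge 0$; from $R^2\ge 4|b_{11}|^2|b_{22}|^2$ alone, $R$ could a priori satisfy $R\le -2|b_{11}||b_{22}|$, so ``it is enough to verify $Q\ge 0$'' is not quite true as stated. The patch uses exactly the ingredients already in play: $|b_{11}|^2\ge 0$ and $|b_{22}|^2\ge 0$ give $a_1C_{11}\ge \tfrac12(e+s)$ and $a_2C_{22}\ge \tfrac12(e+s)$, whence by AM--GM
\begin{gather}
a_1C_{22}+a_2C_{11}\ \ge\ 2\sqrt{(a_1C_{22})(a_2C_{11})}\ =\ 2\sqrt{(a_1C_{11})(a_2C_{22})}\ \ge\ e+s, \nonumber
\end{gather}
so that $R\ge 2s\ge 0$. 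The same computation is what actually backs your one-line claim that $e\le 2\min(a_1C_{11},a_2C_{22})$ ``hence'' $\mathrm{tr}A\,\mathrm{tr}C\ge 2e$: the diagonal products only give $a_1C_{11}+a_2C_{22}\ge e$, and you need the AM--GM bound $a_1C_{22}+a_2C_{11}\ge e$ on the cross terms to reach $2e$. With these two lines inserted, both the inequality and the equality analysis built on it are complete.
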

\begin{proof}
Choose a holomorphic frame near $p$ which is also orthonormal at $p$. Then $\Theta_{12} = -\bar{\Theta}_{21}$. Moreover, the Griffiths positivity of $i\Theta_h$ implies that we can choose coordinates near $p$ so  that at $p$, $i\Theta_{11}(p)=idz^1 \wedge d\bar{z}^1 + idz^2\wedge d\bar{z}^2$ and $i\Theta_{22}(p)=i\lambda_1 dz^1 \wedge d\bar{z}^1+i\lambda_2 dz^2 \wedge d\bar{z}^2$ where $\lambda_1, \lambda_2 >0$. Also, $\eta=fidz ^1 \wedge d\bar{z}^1\wedge idz^2 \wedge d\bar{z}^2$. Thus,
\begin{gather}
\eta (p) Id = (i\Theta_h)^2 (p) \nonumber \\
 = \left [ \begin{array}{cc} 2idz^1 \wedge d\bar{z}^1 \wedge dz^2 \wedge d\bar{z}^2-\Theta_{12} \bar{\Theta}_{12}(p) & \Theta_{11}\Theta_{12} (p)+\Theta_{12} \Theta_{22}(p)  \\ -\Theta_{11} \bar{\Theta}_{12}(p)-\bar{\Theta}_{12} \Theta_{22} (p) &  2i \lambda_1 \lambda_2 dz^1 \wedge d\bar{z}^1 \wedge dz^2 \wedge d\bar{z}^2-\Theta_{12} \bar{\Theta}_{12} (p)\end{array} \right ].
\label{eqnatp}
\end{gather}
Therefore, 
\begin{gather}
1=\lambda_1 \lambda_2, \nonumber \\
 (\Theta_{12})_{22}(p)(\lambda_1+1)=-(\Theta_{12})_{11}(p) (\lambda_2+1), \ and \ \nonumber \\
2+(\Theta_{12})_{11}(\bar{\Theta}_{12})_{22}(p)+(\Theta_{12})_{22}(\bar{\Theta}_{12})_{11}(p)-\vert (\Theta_{12})_{12}^2 \vert^2 (p) -\vert (\Theta_{12})_{21} \vert^2 (p) = f(p). 
\label{beforefinalineq}
\end{gather}
Substituting the second equation in \ref{beforefinalineq} in the third and using the first equation we get
\begin{gather}
2-2\frac{\vert(\Theta_{12})_{11} \vert^2(p)}{\lambda_1} -\vert (\Theta_{12})_{21} \vert^2 (p)-\vert (\Theta_{12})_{12} \vert^2 (p) = f(p).
\label{beffinsimp}
\end{gather}
Now,
\begin{gather}
(-i)^2(c_1^2(p)- 4c_2(p)) = (\Theta_{11}(p)+\Theta_{22}(p))^2 - 4 (\Theta_{11}\Theta_{22}+\Theta_{12} \bar{\Theta}_{12})(p) \nonumber \\
= dz^1\wedge d\bar{z}^1 \wedge dz^2 \wedge d\bar{z}^2 \Big ( 2(1+\lambda_1)(1+\lambda_2) - 4 (\lambda_1+ \lambda_2)  +4(f(p)-2)\Big).
\label{KLsemifinal}
\end{gather}
Using \ref{beffinsimp} and \ref{KLsemifinal} we get the following.
\begin{gather}
\frac{c_1^2(p)- 4c_2(p)}{idz^1\wedge d\bar{z}^1 \wedge idz^2 \wedge d\bar{z}^2}= 4-2\left(\lambda_1+\frac{1}{\lambda_1}\right)+4(-\vert (\Theta_{12})_{12}^2 \vert^2 (p) -\vert (\Theta_{12})_{21} \vert^2 (p)-2\frac{\vert(\Theta_{12})_{11} \vert^2(p)}{\lambda_1}) \nonumber \\
\leq 0,
\label{KLfinal}
\end{gather}
with equality holding if and only if $\Theta = \omega I$ where $\omega^2 = \eta$ (which can be solved because it is the Calabi conjecture).
\end{proof}
\section{Special cases}\label{specialcasessection}
\indent In this section we discuss some examples. 
\subsection{Projective space}
\indent The projective space $\mathbb{CP}^n$ with the Fubini-Study metric $\omega_{FS}=\frac{i}{2\pi}\pbp \ln(1+\vert z \vert^2)$ (where $\vert z \vert^2 = \sum_i \vert z^i \vert^2$) does satisfy $(\frac{i}{2\pi}\Theta_{FS})^n = \lambda \omega_{FS}^n Id$ where $\lambda$ is a constant. (So, at least there is one metric on $T\mathbb{CP}^n$ whose curvature's power is proportional to identity.) Indeed, the curvature of the Fubini-Study metric is given by $\Theta_{FS} = \bar{\partial}(\bar{H}^{-1}\partial \bar{H})$ where $iH_{ij}dz^i \wedge d\bar{z}^j=\omega_{FS}$. Therefore,
\begin{gather}
\bar{H}_{ij} = \frac{1}{2\pi (1+\vert z \vert^2)} \left [\delta_{ij}-\frac{z^i \bar{z^j}}{1+\vert z \vert^2} \right ] =\frac{1}{2\pi (1+\vert z \vert^2)} \tilde{H}_{ij} \nonumber \\
\frac{i\Theta_{FS}}{2\pi} = \omega_{FS} + \frac{i}{2\pi} \bar{\partial}(\tilde{H}^{-1} \partial \tilde{H}). 
\end{gather}
We calculate $\bar{\partial}(\tilde{H}^{-1} \partial \tilde{H})$ : 
\begin{gather}
\tilde{H}^{-1}_{ik} = \delta_{ik}+z^i \bar{z}^k \nonumber \\
\partial \tilde{H}_{kj} = -\frac{dz^k \bar{z}^j}{1+\vert z \vert^2} + \frac{(\displaystyle \sum_l \bar{z}^l dz^l)z^k \bar{z}^j}{(1+\vert z \vert^2)^2} \Rightarrow (\tilde{H}^{-1} \partial \tilde{H})_{ij} = -\frac{dz^i \bar{z}^j}{1+\vert z \vert^2} \nonumber \\
 \Rightarrow \bar{\partial}(\tilde{H}^{-1} \partial \tilde{H})_{ij} = \frac{dz^i \bar{dz}^j}{1+\vert z \vert^2} +\frac{(\sum_k z^k \bar{dz}^k)dz^i\bar{z}^j}{(1+\vert z \vert^2)^2}
\label{FScalc}
\end{gather}
Since the Fubini-Study metric is symmetric, i.e., there is a transitive group of isometries, it is enough to prove that $\left (\frac{i\Theta_{FS}}{2\pi}\right)^n = \lambda \omega_{FS}^n$ at the point $z=0$. Indeed, at $z=0$, 
\begin{gather}
\left (\frac{i\Theta_{FS}}{2\pi}\right)_{\alpha \beta} (z=0)= \frac{i}{2\pi} \delta_{ij}dz^i \wedge d\bar{z}^j \delta_{\alpha \beta} + \frac{i}{2\pi} dz^{\alpha} \wedge d\bar{z}^{\beta} \nonumber \\
\Rightarrow \left (\frac{i\Theta_{FS}}{2\pi}\right)^n _{\alpha \beta} (z=0) = \displaystyle \sum _{r=0}^{r=n-1} {n \choose r} \omega_{FS}^r (-\omega_{FS})^{n-r-1} dz^{\alpha} \wedge d\bar{z}^{\beta} +\omega_{FS}^n\nonumber \\
= \omega_{FS}^n \delta_{\alpha \beta}+(n-1)!\left (\frac{i}{2\pi} \right)^{n-1} \sum_{j}(dz^1\wedge d\bar{z}^1 \ldots \widehat{dz}^j \wedge \widehat{d\bar{z^j}}\wedge\ldots)\wedge  \sum_{r=0}^{n-1} {n \choose r} (-1)^{n-r-1} dz^{\alpha} \wedge d\bar{z}^{\beta} \nonumber \\
= \omega_{FS}^{n} \delta_{\alpha \beta} (1+ \sum_{r=0}^{n-1} {n \choose r} (-1)^{n-r-1}) = 2\omega_{FS}^n.
\end{gather}
\indent Actually, the Fubini-Study metric on $\mathbb{CP}^2$ satisfies the positivity condition in Section \ref{momentmapsection} and hence, for small perturbations $\eta$ of $\lambda \omega_{FS}^2$ there exist solutions of the vector bundle Monge-Amp\`ere equation. Indeed, by symmetry it suffices once again to verify this at $z=0$. Suppose $[a]_{\alpha \beta}=a_{\alpha \beta,\mu} dz^{\mu}$ is a $2\times 2$ matrix of $(1,0)$-forms.
\begin{gather}
 \mathrm{tr}\Bigg(ia \left (\frac{i\Theta_{FS}}{2\pi}\right) (z=0)  a^{\dag} \Bigg)+ \mathrm{tr}\Bigg( \left (\frac{i\Theta_{FS}}{2\pi}\right) (z=0) ia a^{\dag} \Bigg) = 2i\omega_{FS}a_{\alpha \beta} \wedge \bar{a}_{\alpha \beta} \nonumber \\ +  \frac{i}{2\pi} dz^{\alpha}\wedge d\bar{z}^{\beta} ia_{\beta \gamma} \wedge \bar{a}_{\alpha \gamma} + \frac{i}{2\pi} dz^{\beta}\wedge d\bar{z}^{\gamma} ia_{\alpha \beta} \wedge \bar{a}_{\alpha \gamma} \nonumber \\
= \frac{i}{2\pi}dz^1 \wedge d\bar{z}^1 \wedge idz^2 \wedge d\bar{z}^2 \Big(2\displaystyle \sum_{\alpha, \beta} (\vert a_{\alpha \beta,2} \vert^2 + \vert a_{\alpha \beta,1} \vert^2) \nonumber \\+ \vert a_{1\gamma,2} \vert^2 + \vert a_{2\gamma,1} \vert^2 - a_{2\gamma,2} \bar{a}_{1\gamma,1}- a_{1\gamma,1} \bar{a}_{2\gamma,2} + \vert a_{\alpha 1,2} \vert^2+ \vert a_{\alpha 2,1} \vert^2-a_{\alpha 1,2} \bar{a}_{\alpha 2, 1}-a_{\alpha 2,1} \bar{a}_{\alpha 1,2} \Big) \nonumber \\
 \geq 0,
\end{gather}
with equality holding if and only if $a_{\alpha \beta, \gamma} =0 \ \forall \ \alpha, \beta, \gamma$.
\subsection{Mumford stable bundles} 
\indent Suppose a holomorphic vector bundle $E$ over a compact complex surface is Mumford stable with respect to a polarisation $c_1(L)$. Then we prove that $E\otimes L^k$ admits solutions to the vector bundle Monge-Amp\`ere equation (for a sufficiently large $k$ depending on the right hand side):
\begin{theorem}
Assume that a rank-r holomorphic vector bundle $E$ over a compact complex surface $M$ is Mumford stable with respect to an ample class $c_1(L)$. Given a volume form $\eta>0$ such that $\displaystyle \int_M \eta = \int_M c_1(L)^2$, there exists a positive integer $k$ such that $E\otimes L^k$ has a metric $h_k$, the curvature $\Theta_k$ of whose Chern connection is Griffiths positive and satisfies
\begin{gather}
\left ( \frac{i\Theta_k}{2\pi} \right) ^2 = \eta_k = \eta \frac{\displaystyle \int_M 2\mathrm{ch}_2(E\otimes L^k)}{\displaystyle \int_M r\eta} Id = k^2 \eta Id+   k\frac{\displaystyle \int_M 2c_1(E) c_1(L)}{\displaystyle \int_M r\eta}\eta Id +  \frac{\displaystyle \int_M 2\mathrm{ch}_2(E)}{\displaystyle \int_M r\eta} \eta Id.
\end{gather}
\label{nearHE}
\end{theorem}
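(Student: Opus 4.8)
The plan is to build an approximate solution on $E\otimes L^k$ out of a Hermite-Einstein metric, and then correct it to an exact solution by a quantitative implicit function theorem; the whole point of taking $k$ large is that the linearisation of the vbMA operator becomes dominated by a term of size $k$ while the error stays bounded independently of $k$.

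First I would fix the geometry. Since $c_1(L)$ is ample and $\int_M\eta=\int_M c_1(L)^2$, Yau's theorem (the Calabi conjecture) yields a K\"ahler form $\omega\in c_1(L)$ with $\omega^2=\eta$; the $\partial\bar\partial$-lemma then lets me choose a metric $h_L$ on $L$ with $\frac{i\Theta_L}{2\pi}=\omega$. Because $E$ is Mumford stable with respect to $c_1(L)$, the Donaldson-Uhlenbeck-Yau theorem provides a Hermite-Einstein metric $h_E$: writing $F=\frac{i\Theta_E}{2\pi}$ one has $F\wedge\omega=\frac{\lambda}{2}\omega^2 Id$, and integrating the trace forces $\lambda=\frac{2\int_M c_1(E)c_1(L)}{r\int_M\eta}$. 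I then set $h_k^{(0)}=h_E\otimes h_L^{\otimes k}$ on $E\otimes L^k$. Since $\mathrm{End}(E\otimes L^k)\cong\mathrm{End}(E)$, the induced connection on endomorphisms (hence every covariant derivative of an endomorphism) is $k$-independent, and the curvature is $\frac{i\Theta_0}{2\pi}=F+k\omega Id$, which is Nakano positive (hence Griffiths and, by Lemma \ref{MApositivityandothers}, MA-positive) once $k$ is large.

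Next I would verify that $h_k^{(0)}$ solves the equation up to a $k$-independent error. Using that $\omega Id$ is central and $\omega\wedge F=\frac{\lambda}{2}\omega^2 Id=\frac{\lambda}{2}\eta Id$, a direct expansion gives
\begin{gather}
\left(\frac{i\Theta_0}{2\pi}\right)^2=k^2\eta\, Id+k\lambda\eta\, Id+F^2. \nonumber
\end{gather}
Comparing with $\eta_k Id=k^2\eta Id+k\lambda\eta Id+\frac{2\int_M\mathrm{ch}_2(E)}{r\int_M\eta}\eta Id$ (the order-$k$ coefficient is exactly the Hermite-Einstein constant $\lambda$) shows that the discrepancy
\begin{gather}
\Psi_k:=\left(\frac{i\Theta_0}{2\pi}\right)^2-\eta_k Id=F^2-\frac{2\int_M\mathrm{ch}_2(E)}{r\int_M\eta}\,\eta\, Id \nonumber
\end{gather}
is bounded in $C^{0,\gamma}$ uniformly in $k$ and has vanishing integrated trace, so it lies in the correct range space. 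It is precisely the choices $\omega^2=\eta$ (killing the $O(k^2)$ discrepancy) and $h_E$ Hermite-Einstein (killing the $O(k)$ discrepancy) that reduce the error to this $O(1)$ quantity.

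Finally I would correct $h_k^{(0)}$ by a contraction-mapping argument. Writing $h=h_k^{(0)}e^{g}$ for a trace-normalised Hermitian $g$, the linearisation is $L_k g=\frac{i\Theta_0}{2\pi}\,\frac{i\bar\partial\partial_0 g}{2\pi}+\frac{i\bar\partial\partial_0 g}{2\pi}\,\frac{i\Theta_0}{2\pi}$, whose leading part $2k\,\omega\wedge\frac{i\bar\partial\partial_0 g}{2\pi}$ is $k$ times a fixed uniformly elliptic operator (the covariant complex Laplacian on $\mathrm{End}(E)$). By the integration-by-parts argument of Lemma \ref{opennesspositivity}, MA-positivity together with indecomposability of $E\otimes L^k$ (inherited from stability of $E$) makes the kernel trivial modulo scaling, so Schauder estimates give $\|L_k^{-1}\|_{C^{0,\gamma}\to C^{2,\gamma}}=O(1/k)$ uniformly. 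I then seek a fixed point of $g\mapsto -L_k^{-1}(\Psi_k+Q_k(g))$, where $Q_k$ is the nonlinear remainder. The essential book-keeping is that the only $k$-growth in $Q_k$ comes from the central factor $2k\,\omega\wedge(\cdot)$ acting on the quadratic-and-higher part of the curvature of $h_k^{(0)}e^{g}$, so $\|Q_k(g)\|_{C^{0,\gamma}}\lesssim k\|g\|_{C^{2,\gamma}}^2$; combined with $\|L_k^{-1}\|=O(1/k)$ and $\|\Psi_k\|=O(1)$, the map becomes a contraction on a ball of radius $O(1/k)$ for $k$ large, producing $g_k=O(1/k)$. The main obstacle is exactly this uniform-in-$k$ tracking: one must check that the $O(k)$ growth of the nonlinearity is cancelled by the $O(1/k)$ bound on the inverse, so that quadratic smallness wins. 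Since $g_k\to 0$, the curvature of the resulting solution is $k\omega Id+O(1)$, which remains Griffiths positive for large $k$, completing the proof.
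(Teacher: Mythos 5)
Your proposal is correct, and it rests on the same pillars as the paper's proof of Theorem \ref{nearHE}: Yau's theorem giving $\omega\in c_1(L)$ with $\omega^2=\eta$, the Uhlenbeck--Yau theorem giving a Hermite--Einstein metric on $E$, the product ansatz $h_E\otimes h_L^{k}$ (whose error is $O(1)$ precisely because those two choices kill the $O(k^2)$ and $O(k)$ discrepancies), and kernel-triviality of the operator $2\omega\wedge\frac{i}{2\pi}\bar{\partial}\partial_0$ via parallelism plus indecomposability. The genuine difference is in how the perturbation step is executed. The paper sets $\hbar=1/k$, divides the rescaled equation by $\hbar$ so that it remains nondegenerate at $\hbar=0$, and applies the soft implicit function theorem in $\hbar$ at $(0,\mathrm{Id})$; all $k$-dependence is absorbed by the rescaling, so no quantitative estimates are needed. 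You keep the equation unrescaled and run a contraction mapping, which obliges you to track constants: $\Vert\Psi_k\Vert=O(1)$, $\Vert Q_k(g)\Vert\lesssim k\Vert g\Vert^2$, and $\Vert L_k^{-1}\Vert=O(1/k)$ --- and note that this last bound does \emph{not} follow from kernel-triviality of $L_k$ plus Schauder theory alone (that gives some constant for each fixed $k$, not uniformity); you must write $L_k=k\bigl(\Delta+\tfrac{1}{k}P\bigr)$ with $\Delta=2\omega\wedge\frac{i}{2\pi}\bar{\partial}\partial_0$ invertible on the trace-normalised space and invert by a Neumann series, which is what your remark about the ``leading part'' amounts to, so make it explicit. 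You should also record that $\Psi_k+Q_k(g)$ has vanishing integrated trace for \emph{every} $g$, not just $g=0$ (this follows from Chern--Weil invariance of $\int_M\mathrm{tr}\bigl(\tfrac{i\Theta_h}{2\pi}\bigr)^2$), so that the fixed-point map actually lands in the range of $L_k$; the analogous point is implicit in the paper's normalisations and its use of the Fredholm alternative for a self-adjoint operator. What your version buys in exchange for the extra bookkeeping: the solution is exhibited concretely as $h_k^{(0)}e^{g_k}$ with $g_k=O(1/k)$, so its curvature is $k\omega\,\mathrm{Id}+O(1)$ and the Griffiths positivity asserted in the statement is immediate, a point the paper's softer argument leaves implicit.
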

\begin{proof}
By Yau's solution of the Calabi conjecture, there exists a metric $h_0$ on $L$ such that its first Chern form $\omega_0$ satisfies $\omega_0^2 =\eta$. Let $g_0$ be a Hermitian-Einstein metric on $E$ (which exists by the Uhlenbeck-Yau theorem \cite{UY}) with curvature $\Theta_0$. Every other metric on $E$ is of the form $g=g_0H$ where $H$ is a $g_0$-Hermitian endomorphism of $E$. The curvature of $g$ is $\Theta = \Theta_0 + \bar{\partial} (H^{-1} \partial_0 H)$ (here the matrix for the metric is such that $\langle s,t\rangle = \overline{s^{\dag} H t}$). Consider the following family of equations parametrised by $\hbar=\frac{1}{k}$ for a Hermitian endomorphism $H_{\hbar}$ satisfying $\int_M tr(H_{\hbar}) vol_{\omega_0} =r \int_M c_1(L)$.
\begin{gather}
\left ( \frac{i}{2\pi}(\hbar \Theta_0 + \omega_0 +  \hbar \bar{\partial} (H_{\hbar}^{-1} \partial_0 H_{\hbar})  ) \right )^2 = \eta Id +  \hbar\frac{\displaystyle \int_M 2c_1(E) c_1(L)}{\displaystyle \int_M r\eta}\eta Id +  \hbar^2\frac{\displaystyle \int_M 2\mathrm{ch}_2(E)}{\displaystyle \int_M r\eta} \eta Id.
\label{perturbedequation}
\end{gather}
The above equation \ref{perturbedequation} has a solution $H_{0} = Id$ at $\hbar=0$. The implicit function theorem proves that for small $\hbar$ (i.e. large $k$) equation \ref{perturbedequation} has a solution provided the linearization at $\hbar=0$ is an isomorphism. However, it is not prudent to linearise the above equation directly because the zeroeth order term is $0$. Therefore we consider the linearization of $$T(\hbar, H) = \frac{1}{\hbar} \Bigg ( \left ( \frac{i}{2\pi}(\hbar \Theta_0 + \omega_0 +  \hbar \bar{\partial} (H^{-1} \partial_0 H)  ) \right )^2 - \eta Id -  \hbar\frac{\displaystyle \int_M 2c_1(E) c_1(L)}{\displaystyle \int_M r\eta}\eta Id -  \hbar^2\frac{\displaystyle \int_M 2\mathrm{ch}_2(E)}{\displaystyle \int_M r\eta} \eta Id \Bigg )$$
at $\hbar=0$. Indeed, $T(0,Id)=0$. The linearization is $$DT_{(0,I)}(h) = 2\omega_0 \wedge \bar{\partial} \partial_0 h. $$ This is of course a self-adjoint map and hence by the Fredholm alternative we need to prove that it has trivial kernel. Indeed, if $DT(0,h)=0$, then $\displaystyle \int_M \omega_0 \wedge tr(h\bar{\partial} \partial_0 h)=0$. Integration-by-parts allows us to conclude that $h$ is parallel. Therefore, the eigenspaces of $h$ are parallel transported to form holomorphic subbundles of $E$. Since $E$ is stable, it is indecomposable and hence $h=cId$. By the normalisation condition, $c=0$. Hence we are done.
\end{proof} 
\subsection{The Vortex bundle}
\indent In \cite{Oscar} a construction of a rank-2 holomorphic bundle over a product of Riemann surfaces $\Sigma \times \mathbb{CP}^1$ was given. This bundle had a high degree of symmetry that was exploited to reduce its Mumford stability to a simple Chern classes inequality. Moreover, the Hermitian-Einstein equation could be reduced to a single PDE for a single function on a Riemann surface (to which the Kazdan-Warner theory could be applied). For the remainder of this paper, we study these Vortex bundles in the context of the vbMA equation. In particular, we prove Theorem \ref{KobHitcorrespondence} in this section. To have notation consistent with the existing literature on these bundles, we drop the $2\pi$ in several of our definitions below (especially in the Fubini-Study metric). \\ 
\indent Consider a genus $g$ compact Riemann surface $\Sigma$ endowed with a metric whose $(1,1)$-form is $\omega_{\Sigma}=i\Theta_0$ where $\Theta_0$ is the curvature of a metric $h_0$ on a degree one line holomorphic bundle $L$. Let $\mathbb{CP}^1$ be endowed with the Fubini-study metric $\omega_{FS} = \frac{i dz\wedge d\bar{z}}{(1+\vert z \vert^2)^2}$ which is the curvature of a metric $h_{FS}$ on $\mathcal{O}(1)$. Consider the rank-2 bundle  $$V=\pi_1^{*}((r_1+1)L)\otimes \pi_2^{*}(r_2 \mathcal{O}(2)) \oplus \pi_1 ^{*}(r_1 L) \otimes \pi_2 ^{*} ((r_2+1)\mathcal{O}(2)),$$ where $r_1,r_2 \geq 2$. \\
\indent Suppose $h$ is a smooth metric on $L$ and $f_2$ is a smooth positive function on $\Sigma$. Put a metric $H=h_1 \oplus  g_2$ on $V$ where $h_1=\pi_1^{*}(h f_2 h_0 ^{r_1}) \otimes \pi_2^{*}(h_{FS} ^{2r_2})$ is a metric on $\pi_1^{*}((r_1+1)L)\otimes \pi_2^{*}(r_2 \mathcal{O}(2))$ and $g_2=\pi_1^{*}(f_2 h_0^{r_1}) \otimes \pi_2^{*}(h_{FS}^{2r_2+2})$ is a metric on $ \pi_1^{*}(r_1L)\otimes \pi_2^{*}((r_2+1) \mathcal{O}(2))$. \\
\indent  Using a holomorphic section $\phi \in H^{0}(X,L)$ endow $V$ with a holomorphic structure through the second fundamental form $\beta =\pi_1^{*}\phi \otimes \pi_2 ^{*}\zeta \in H^{1}(X,\pi_1^{*} L\otimes \pi_2^{*} \mathcal{O}(-2)) \simeq H^0 (\Sigma, L)$ where $\zeta = \frac{\sqrt{8\pi } dz}{(1+\vert z \vert^2)^2} \otimes d\bar{z}$. \\
\subsection{Dimensional reduction to the  Monge-Amp\`ere Vortex equation}
\indent We now reduce the vbMA equation for the Vortex bundle $V$ for certain symmetric right-hand-sides to a single equation on $\Sigma$. To this end, the Chern connection of $(V,H)$ is given by the following expression.  
\begin{gather}
A=\left ( \begin{array}{cc} A_{h_1} & \beta \\ 
 -\beta^{\dag_h}  &  A_{g_2} \end{array} \right )
\end{gather}
Its curvature is
\begin{gather}
\Theta = \left ( \begin{array}{cc} \Theta_{h_1}-\beta \wedge \beta^{\dag_h} & \nabla ^{1,0} \beta \\ 
 -\nabla ^{0,1} \beta^{\dag_h}  &  \Theta_{g_2} - \beta^{\dag} \wedge \beta \end{array} \right ) \nonumber \\
= \left ( \begin{array}{cc} \Theta_{h}+\Theta_{f_2}-ir_1 \omega_{\Sigma}-2ir_2 \omega_{FS} -i\vert \phi \vert_h^2 \omega_{FS} & \nabla ^{1,0} \phi \wedge \pi_2 ^{*} \zeta \\ 
 -\nabla ^{0,1} \phi^{\dag_h} \wedge \pi_2 ^{*} \zeta^{\dag} &  \Theta_{f_2} -ir_1 \omega_{\Sigma}-(2r_2+2)i \omega_{FS} + i  \vert \phi \vert_h^2 \omega_{FS}
\end{array} \right ).
\label{curvatureformula}
\end{gather}
For future use we record that
\begin{gather}
\nabla ^{1,0} \beta \wedge \nabla ^{0,1} \beta^{\dag_h} = -i \nabla ^{1,0} \phi \wedge \nabla ^{0,1}\phi^{\dag_h} \wedge \omega_{FS}.
\label{squarederivativeofsecondfundform}
\end{gather}
\indent We note that rescaling the metric $h_0$ to $h_0 A$ does not change the curvature $i\Theta_0 = \omega_{\Sigma}$. Assume that $h_0$ has been rescaled so that $\vert \phi \vert_0^2 <\frac{1}{2}$. Now we dimensionally reduce the vbMA equation to the Monge-Amp\`ere Vortex equation.
\begin{theorem}
 Suppose there is a smooth metric $h$ on $L$ satisfying
$$\vert \phi\vert_h^2 \leq 1,$$
and solving the following equation. 
\begin{gather}
 i\Theta_h = (1- \vert \phi \vert_h^2)\frac{\zeta+i\nabla ^{1,0} \phi \wedge \nabla^{0,1} \phi^{\dag_h}}{(2r_2+ \vert \phi \vert_h^2)(2+2r_2- \vert \phi \vert_h^2)}, \nonumber
\end{gather}
where $\zeta>0$ is a given $(1,1)$-form on $\Sigma$ satisfying
$$\displaystyle \int _{\Sigma} \zeta = 2(r_1(r_2+1)+r_2(r_1+1)).$$ \\
\indent Then there is a smooth Griffiths positively curved metric $H$ on the Vortex bundle $V$ whose curvature $\Theta$ satisfies the vbMA equation :
\begin{gather}
 (i\Theta)^2 = \pi_1^{*}\zeta\wedge \pi_2^{*}\omega_{FS} I.
\label{symmetricvbma}
\end{gather}
\label{dimensionalreductionthm}
\end{theorem}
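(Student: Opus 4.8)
The plan is to insert the block expression \ref{curvatureformula} for $\Theta$ into $(i\Theta)^2$ and expand, using that on the product $\Sigma\times\mathbb{CP}^1$ the wedge of any two forms pulled back from $\Sigma$ (or of any two pulled back from $\mathbb{CP}^1$) vanishes for bidegree reasons. Writing the diagonal entries of $i\Theta$ as $i\Theta_{11}=S_1+c_1\omega_{FS}$ and $i\Theta_{22}=S_2+c_2\omega_{FS}$, where $S_1=i\Theta_h+i\Theta_{f_2}+r_1\omega_{\Sigma}$ and $S_2=i\Theta_{f_2}+r_1\omega_{\Sigma}$ are real $(1,1)$-forms on $\Sigma$ and $c_1=2r_2+\vert\phi\vert_h^2$, $c_2=2+2r_2-\vert\phi\vert_h^2$ are functions on $\Sigma$, and keeping the off-diagonal entries $\nabla^{1,0}\beta,\,-\nabla^{0,1}\beta^{\dag_h}$ of \ref{curvatureformula}, I would first check that both off-diagonal entries of $(i\Theta)^2$ vanish identically: each summand carries either a $(2,1)$-form on $\Sigma$ or a $(1,2)$-form on $\mathbb{CP}^1$. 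Thus $(i\Theta)^2$ is automatically diagonal. On the diagonal the pure squares $S_j\wedge S_j$ and $\omega_{FS}\wedge\omega_{FS}$ drop out, and using \ref{squarederivativeofsecondfundform} the surviving off-diagonal product contributes $-\,i\nabla^{1,0}\phi\wedge\nabla^{0,1}\phi^{\dag_h}\wedge\omega_{FS}$. Hence, after cancelling the common factor $\pi_2^{*}\omega_{FS}$, the vbMA equation \ref{symmetricvbma} is equivalent to the two scalar $(1,1)$-form identities on $\Sigma$
\begin{gather}
2c_1 S_1 - i\nabla^{1,0}\phi\wedge\nabla^{0,1}\phi^{\dag_h}=\zeta,\qquad 2c_2 S_2 - i\nabla^{1,0}\phi\wedge\nabla^{0,1}\phi^{\dag_h}=\zeta. \nonumber
\end{gather}

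The crucial point is that these two equations share the single unknown function $f_2$, entering both $S_1$ and $S_2$ through $i\Theta_{f_2}$, so the pair is overdetermined and its compatibility is precisely the Monge-Amp\`ere Vortex equation \ref{MAVortexeq}. Subtracting the two identities eliminates $\zeta$ and the second-fundamental-form term and gives $c_1 S_1=c_2 S_2$; since $S_1-S_2=i\Theta_h$ and $c_2-c_1=2(1-\vert\phi\vert_h^2)$, this determines $S_2=\tfrac{(2r_2+\vert\phi\vert_h^2)}{2(1-\vert\phi\vert_h^2)}\,i\Theta_h$, and substituting back into either identity reproduces, after simplification with $c_1 c_2=(2r_2+\vert\phi\vert_h^2)(2+2r_2-\vert\phi\vert_h^2)$, exactly \ref{MAVortexeq}. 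Therefore, assuming the hypothesis that $h$ solves the Monge-Amp\`ere Vortex equation, I would define $f_2$ by solving $i\Theta_{f_2}=S_2-r_1\omega_{\Sigma}$ on $\Sigma$, i.e. $-i\partial\bar{\partial}\log f_2=S_2-r_1\omega_{\Sigma}$, with $S_2$ prescribed as above.

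This last equation is a linear Poisson-type ($i\partial\bar{\partial}$) equation on the compact Riemann surface $\Sigma$, and it admits a positive solution $f_2$, unique up to a constant scale, exactly when its right-hand side is $d$-exact, that is when $\int_{\Sigma}S_2=r_1\int_{\Sigma}\omega_{\Sigma}$. I would verify this degree condition by integrating the Monge-Amp\`ere Vortex equation over $\Sigma$ and invoking the normalization $\int_{\Sigma}\zeta=2(r_1(r_2+1)+r_2(r_1+1))$ together with $\deg L=1$. Conceptually this is the Chern-number constraint $\int_{\Sigma\times\mathbb{CP}^1}\eta=\tfrac{2!}{2}\int_{\Sigma\times\mathbb{CP}^1}\mathrm{ch}_2(V)$ of Theorem \ref{momentmap}, specialized to $\eta=\pi_1^{*}\zeta\wedge\pi_2^{*}\omega_{FS}$, and the stated normalization of $\int_{\Sigma}\zeta$ is chosen precisely so that it holds.

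It remains to confirm that the curvature $\Theta$ of the metric $H$ so constructed is Griffiths positive. Since $\vert\phi\vert_h^2\le 1$, the right-hand side of \ref{MAVortexeq} is a nonnegative $(1,1)$-form and $i\Theta_h$ is strictly positive wherever $\vert\phi\vert_h^2<1$, so the $\Sigma$-forms $S_1,S_2$ are positive and the diagonal blocks $i\Theta_{11},i\Theta_{22}$ are strictly positive combinations of $\omega_{\Sigma}$ and $\omega_{FS}$. For a mixed holomorphic tangent direction $\xi$, positivity of the full $2\times2$ Hermitian matrix $\Theta(\xi,\bar\xi)$ then reduces to the pointwise determinant inequality $i\Theta_{11}(\xi,\bar\xi)\,i\Theta_{22}(\xi,\bar\xi)>\vert i\Theta_{12}(\xi,\bar\xi)\vert^2$. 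I expect this estimate --- controlling the off-diagonal $\nabla^{1,0}\phi$ contribution against the diagonal entries, and handling the boundary locus $\vert\phi\vert_h^2=1$ --- to be the main obstacle, since, unlike the purely algebraic reduction above, it genuinely exploits the a priori bound $\vert\phi\vert_h^2\le1$; an alternative would be to realize $H$ as the endpoint of a continuity path and invoke Lemma \ref{gpispreserved} to propagate Griffiths positivity.
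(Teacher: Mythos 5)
Your algebraic reduction coincides exactly with the paper's: the bidegree argument for diagonality, the two scalar identities (these are precisely \ref{genma1} and \ref{genma2} with $\zeta=\mu\omega_{\Sigma}$), the subtraction giving $i\Theta_{f_2}+r_1\omega_{\Sigma}=\frac{(2r_2+\vert\phi\vert_h^2)}{2(1-\vert\phi\vert_h^2)}i\Theta_h$ (equation \ref{genma3}), and the resulting linear $\partial\bar{\partial}$-equation for $f_2$. However, your verification of the solvability condition for $f_2$ is under-specified in a way that matters. The needed identity $\int_{\Sigma}S_2=r_1\int_{\Sigma}\omega_{\Sigma}$ is not obtained by ``integrating the Monge-Amp\`ere Vortex equation'': after substituting \ref{MAVortexeq}, the integral $\int_{\Sigma}\frac{(2r_2+\vert\phi\vert_h^2)}{2(1-\vert\phi\vert_h^2)}\,i\Theta_h$ still contains the non-topological quantity $\int i\nabla^{1,0}\phi\wedge\nabla^{0,1}\phi^{\dag_h}$ weighted by functions of $\vert\phi\vert_h^2$. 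The paper's lemma \ref{existenceoff2} closes this loop only by integrating the Weitzenb\"ock identity \ref{verycrucialWB}, $\int i\Theta_h\vert\phi\vert_h^2=\int i\nabla^{1,0}\phi\wedge\nabla^{0,1}\phi^{\dag_h}$, and combining it with the vortex equation to get $\int\frac{i\Theta_h}{1-\vert\phi\vert_h^2}=\frac{\mu+1}{1+2r_2(2r_2+2)}$ (equation \ref{beforejuncture}); only then does the degree count come out to $r_1$. Relatedly, your prescription for $S_2$ has an apparent singularity on the locus $\{\vert\phi\vert_h=1\}$; smoothness of the right-hand side holds only after substituting \ref{MAVortexeq}, which cancels the factor $1-\vert\phi\vert_h^2$ --- a point the paper checks and you do not.

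The more serious gap is Griffiths positivity, which is part of the theorem's conclusion and which you explicitly defer as ``the main obstacle.'' The paper proves it in lemma \ref{GPH}: equations \ref{genma1}--\ref{genma2} exhibit the diagonal $\Sigma$-parts $i\Theta_h+i\Theta_{f_2}+r_1\omega_{\Sigma}$ and $i\Theta_{f_2}+r_1\omega_{\Sigma}$ as explicit positive multiples of $\mu\omega_{\Sigma}+i\nabla^{1,0}\phi\wedge\nabla^{0,1}\phi^{\dag_h}$, and the determinant-type condition reduces to showing that the coefficient of $\vert v_1v_2\vert^2$ is bounded below by $\frac{2(1-\vert\phi\vert_h^2)^2}{(2r_2+\vert\phi\vert_h^2)(2r_2+2-\vert\phi\vert_h^2)}\,i\nabla^{1,0}\phi\wedge\nabla^{0,1}\phi^{\dag_h}\geq 0$, an estimate that uses \ref{RSPDE} and the bound $\vert\phi\vert_h^2\leq 1$ in an essential way. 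Your proposed fallback --- invoking lemma \ref{gpispreserved} --- does not apply as stated: that lemma requires a path of metrics on the rank-2 bundle along which $(i\Theta_t)^2$ remains positive-definite as an endomorphism and whose initial metric is already Griffiths positive, and no such path for $H$ on $V$ is available here (the continuity path in the paper lives on $L$, and its dimensional reduction produces the vbMA equation only at the endpoint $t=1$). So the positivity must be proven directly from the structure equations, as the paper does; without that, the theorem as stated is not established.
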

\begin{proof}
  Without loss of generality, we may choose the K\"ahler form $\omega_{\Sigma}$ on $\Sigma$ to satisfy $$\mu \omega_{\Sigma} = \zeta,$$
where $\mu$ is a constant given by the following expression.
$$\mu= (2\pi)^2 \int_{X\times \mathbb{CP}^1} ch_2(V) = 2(r_2(r_1+1)+r_1(r_2+1))>0.$$ \\
\indent  Substituting \ref{curvatureformula} in the vbMA \ref{symmetricvbma}, we get the following equations. 
\begin{gather}
\left [\begin{array}{cc} (i\Theta_{h_1}-i\beta \wedge \beta^{\dag_h})^2+\nabla ^{1,0}\beta \wedge \nabla ^{0,1} \beta^{\dag_h} & 0 \nonumber \\ 0& (i\Theta_{g_2}-i\beta^{\dag_h}\wedge \beta)^2+ \nabla ^{0,1} \beta^{\dag_h}\wedge \nabla ^{1,0}\beta\end{array} \right ] = \eta Id.
\label{genma}
\end{gather}
The above equation can be simplified to yield a system of equations.
\begin{align}
2(i\Theta_h + i\Theta_{f_2}+r_1 \omega_{\Sigma})(2r_2+ \vert \phi \vert_h^2)-i \nabla ^{1,0}\phi \wedge \nabla ^{0,1}\phi^{\dag_h} &= \mu \omega_{\Sigma},\label{genma1}
\end{align}
and
\begin{align}
\ \ 2(i\Theta_{f_2}+r_1\omega_{\Sigma})(2r_2+2- \vert \phi \vert_h^2)-i \nabla ^{1,0}\phi \wedge \nabla ^{0,1}\phi^{\dag_h} &= \mu \omega_{\Sigma}.
\label{genma2}
\end{align}

\indent From \ref{genma1} and \ref{genma2} we see that
\begin{align}
2(i\Theta_{f_2} +r_1 \omega_{\Sigma})(1- \vert \phi \vert_h^2) &= i\Theta_h (2r_2+ \vert \phi \vert_h^2) \nonumber \\
\Rightarrow i\Theta_{f_2} +r_1 \omega_{\Sigma} &= \frac{i\Theta_h (2r_2+ \vert \phi \vert_h^2)}{2(1- \vert \phi \vert_h^2)}.
\label{genma3}
\end{align}
Substituting \ref{genma3} in \ref{genma2} we see that
\begin{gather}
i\Theta_h = (1- \vert \phi \vert_h^2)\frac{\mu\omega_{\Sigma}+i\nabla ^{1,0} \phi \wedge \nabla^{0,1} \phi^{\dag_h}}{(2r_2+ \vert \phi \vert_h^2)(2+2r_2- \vert \phi \vert_h^2)}.
\label{RSPDE}
\end{gather}
By assumption, equation \ref{RSPDE} has a smooth solution satisfying $\vert \phi \vert_h^2 \leq 1$. (Hence $\Theta_h \geq 0$.) We can now solve \ref{genma3} for a smooth $f_2$ and thus we can solve the vbMA equation.
\begin{lemma}
If equation \ref{RSPDE} has a smooth solution $h$ satisfying $\vert \phi \vert_h^2 \leq 1$, then \ref{genma3} has a smooth solution $f_2$ (unique up to a constant).
\label{existenceoff2}
\end{lemma}
\begin{proof}
Using local normal coordinates it is easy to prove the following Weitzenb\"ock-type identity.
\begin{gather}
\pbp \vert \phi \vert_{h}^2 = -\Theta_h \vert \phi \vert_{h}^2 + \nabla^{1,0} \phi \wedge \nabla ^{0,1} \phi ^{\dag_h}. 
\label{verycrucialWB}
\end{gather}
Integrating on both sides we see that
\begin{align}
\displaystyle \int i\Theta_h \vert \phi \vert_h^2 &= \int i\nabla^{1,0} \phi \wedge \nabla ^{0,1} \phi ^{\dag_h} \nonumber \\
&= \displaystyle \int i\Theta \frac{(2r_2+\vert \phi \vert_h^2)(2r_2+2-\vert \phi \vert_h^2)}{1-\vert \phi \vert_h^2}-\int \mu \omega_{\Sigma} \nonumber \\
\Rightarrow \displaystyle \mu  &= \int i\Theta_h \frac{\vert \phi \vert_h^2+2r_2(2r_2+2)}{1-\vert \phi \vert_h^2} \nonumber \\
&= \int i\Theta_h \left [-1+\frac{1+2r_2(2r_2+2)}{1-\vert \phi \vert_h^2} \right ]\nonumber \\
\Rightarrow \frac{\mu+1}{1+2r_2(2r_2+2)} &=\int \frac{i\Theta_h}{1-\vert \phi \vert_h^2}
\label{beforejuncture}
\end{align}
Equation \ref{genma3} is a linear equation and hence has a unique (up to a constant) smooth solution if and only if the right-hand-side is smooth and the integrals on both sides match up. Using the expressions \ref{RSPDE} and \ref{genma3} it is easy to see that the coefficients of the equation are smooth. It remains to check that the integrals are equal.
\begin{align*}
\displaystyle \int (i\Theta_{f_2} +r_1 \omega_{\Sigma})&=r_1 \\
\int \frac{i\Theta_h (2r_2+ \vert \phi \vert_h^2)}{2(1- \vert \phi \vert_h^2)} &= \int \frac{i\Theta_h}{2} \left (\frac{2r_2+1}{1- \vert \phi \vert_h^2}-1\right ) \nonumber \\
&=\frac{2r_2+1}{2}\int \frac{i\Theta_h}{1-\vert \phi \vert_h^2}-\frac{1}{2}.
\end{align*}
At this juncture we may use equation \ref{beforejuncture} to conclude the proof.
\begin{align*}
\int \frac{i\Theta_h (2r_2+ \vert \phi \vert_h^2)}{2(1- \vert \phi \vert_h^2)}&=\frac{2r_2+1}{2}\frac{\mu+1}{1+2r_2(2r_2+2)}-\frac{1}{2} \nonumber \\
&= \frac{(2r_2+1)(2r_1(r_2+1)+2r_2(r_1+1)+1)}{2(2r_2+1)^2}-\frac{1}{2} \\
&= \frac{2r_1(2r_2+1)+2r_2+1}{2(2r_2+1)}-\frac{1}{2} \\
=r_1&= \int (i\Theta_{f_2} +r_1 \omega_{\Sigma}). 
\end{align*}
\end{proof}
\indent Therefore, equation \ref{symmetricvbma} has a smooth solution $H=h_1\oplus g_2$ where $h_1 = \pi_1^{*}(hf_2 h_0^{r_1})\otimes \pi_2^{*}(h_{FS}^{2r_2})$ and $g_2= \pi_1^{*}(f_2 h_0^{r_1})\otimes \pi_2^{*}(h_{FS}^{2r_2+2})$. We just need to prove that $H$ is Griffiths positively curved. Indeed,
\begin{lemma}
$\Theta_H$ is Griffiths positive.
\label{GPH}
\end{lemma}
\begin{proof}
We check the Griffiths positivity of the metric $H$ by looking at the $(1,1)$-form $\vec{v}^{\dag} \Theta \vec{v}$.
\begin{align}
\vec{v}^{\dag} \Theta \vec{v} &= \vert v_1 \vert^2 (\Theta_{h}+\Theta_{f_2}-ir_1 \omega_{\Sigma}-2ir_2 \omega_{FS} -i\vert \phi \vert_h^2 \omega_{FS})\nonumber \\ &+ \vert v_2 \vert ^2 (\Theta_{f_2} -ir_1 \omega_{\Sigma}-(2r_2+2)i \omega_{FS} + i  \vert \phi \vert_h^2 \omega_{FS})\nonumber \\
 &+ \bar{v}_1 v_2 \nabla ^{1,0} \phi \wedge \pi_2 ^{*} \zeta -\bar{v}_2 v_1 \nabla ^{0,1} \phi^{\dag_h} \wedge \pi_2 ^{*} \zeta^{\dag} \nonumber \\
&=  \vert v_1 \vert^2 (\Theta_{h}+\Theta_{f_2}-ir_1 \omega_{\Sigma})+  \vert v_2 \vert ^2 (\Theta_{f_2} -ir_1 \omega_{\Sigma})  \nonumber \\
&+ \omega_{FS} (\vert v_1 \vert ^2 (-2ir_2 -i \vert \phi \vert_h^2) + \vert v_2 \vert^2 (-(2r_2+2)i+ i \vert \phi \vert_h^2)) \nonumber \\
 &+ \bar{v}_1 v_2 \nabla ^{1,0} \phi \wedge \pi_2 ^{*} \zeta -\bar{v}_2 v_1 \nabla ^{0,1} \phi^{\dag_h} \wedge \pi_2 ^{*} \zeta^{\dag}. 
\label{griffpositivityform}
\end{align} 
The form $i \vec{v}^{\dag} \Theta \vec{v}$ is positive for all $\vec{v} \neq 0$ if and only if
\begin{align}
i\Theta_h+i\Theta_{f_2} + r_1 \omega_{\Sigma} &> 0, \nonumber \\
i\Theta_{f_2} + r_1 \omega_{\Sigma} &>0, \nonumber \\
(2r_2+2) -  \vert \phi \vert_h^2 &>0, \nonumber
\end{align}
and
\begin{gather}
0<\frac{(i \vec{v}^{\dag} \Theta \vec{v})^2}{2} = \omega_{FS} \Bigg ( \vert v_1 \vert^4 (2r_2+ \vert \phi \vert_h^2)(i\Theta_h+i\Theta_{f_2}+r_1 \omega_{\Sigma})\nonumber \\
 + \vert v_2 \vert^4 (2r_2+2- \vert \phi \vert_h^2)(i\Theta_{f_2}+r_1 \omega_{\Sigma}) \nonumber \\
+ \vert v_1 v_2 \vert^2 \left(i\Theta_h(2r_2+2- \vert \phi \vert_h^2)+(4r_2+2)(i\Theta_{f_2}+r_1 \omega_{\Sigma})-i \nabla ^{1,0} \phi \wedge \nabla^{0,1} \phi^{\dag_h} \right )\Bigg ).
\label{griffposconditions}
\end{gather}
In our case we may use equations \ref{genma1} and \ref{genma2} to calculate.
\begin{align}
\vert \phi \vert_h^2 &\leq 1 <2r_2+2 \nonumber \\
i\Theta_h+i\Theta_{f_2} + r_1 \omega_{\Sigma} &=\frac{\mu \omega_{\Sigma}+i\nabla^{1,0}\phi \wedge \nabla^{0,1} \phi^{\dag_h}}{2(2r_2+\vert \phi \vert_h^2)}>0\nonumber \\
i\Theta_{f_2} + r_1 \omega_{\Sigma} &= \frac{\mu \omega_{\Sigma}+i\nabla^{1,0}\phi \wedge \nabla^{0,1} \phi^{\dag_h}}{2(2r_2+2-\vert \phi \vert_h^2)}>0.
\label{halfconditions1}
\end{align}
As for the inequality \ref{griffposconditions}, it is clear that the coefficients of $\vert v_1 \vert^4$ and $\vert v_2 \vert^4$ are positive. We will prove that the coefficient of $\vert v_1 v_2 \vert^2$ is non-negative. In what follows, we use expressions \ref{RSPDE} and \ref{halfconditions1} to calculate.
\begin{align}
&\left(i\Theta_h(2r_2+2- \vert \phi \vert_h^2)+(4r_2+2)(i\Theta_{f_2}+r_1 \omega_{\Sigma})-i \nabla ^{1,0} \phi \wedge \nabla^{0,1} \phi^{\dag_h} \right ) \nonumber \\
&\geq i\nabla^{1,0} \phi \wedge \nabla^{0,1} \phi^{\dag_h} \left (\frac{1-\vert \phi \vert_h^2}{2r_2+\vert \phi \vert_h^2}+\frac{4r_2+2}{2(2r_2+2-\vert \phi \vert_h^2)}-1 \right ) \nonumber \\
&=i\nabla^{1,0} \phi \wedge \nabla^{0,1} \phi^{\dag_h} \left (\frac{1-\vert \phi \vert_h^2}{2r_2+\vert \phi \vert_h^2}-\frac{1-\vert \phi \vert_h^2}{2r_2+2-\vert \phi \vert_h^2} \right ) \nonumber \\
 &=\frac{2i\nabla^{1,0} \phi \wedge \nabla^{0,1} \phi^{\dag_h} (1-\vert \phi \vert_h^2)^2}{(2r_2+\vert \phi \vert_h^2)(2r_2+2-\vert \phi \vert_h^2)} \nonumber \\
&\geq 0. \nonumber
\end{align}
Hence, the metric $H$ is Griffiths positively curved as desired.
\end{proof}
Therefore, the vbMA equation has been dimensionally reduced to the Monge-Amp\`ere Vortex equation thus proving Theorem \ref{dimensionalreductionthm}.
\end{proof}

\subsection{Existence}
\indent  Now we set up the method of continuity to solve the Monge-Amp\`ere Vortex equation \ref{MAVortexeq}. To this end, we need to choose a good initial metric. We choose it to be $h_0 A$ where $h_0$ satisfies $i\Theta_0 = \omega_{\Sigma}$ and $\vert \phi \vert_0^2 <\frac{1}{2}$ as before, and $A$ is a small scaling factor. This rescaled metric has the same curvature as the original one. \\
\indent Let $T\subset[0,1]$ be the set of all $t$ such that the following equation has a smooth solution $h_t=h_0 e^{-\psi_t}$. 
\begin{gather}
i\Theta_{h_t} = i(\Theta_{h_0}+\pbp \psi_t)= (1- \vert \phi \vert_{h_t}^2)\frac{\mu u^{1-t} \omega_{\Sigma}+it\nabla_t ^{1,0} \phi \wedge \nabla_t^{0,1} \phi^{\dag_{h_t}}}{(2r_2+t \vert \phi \vert_{h_t}^2)(2+2r_2-t \vert \phi \vert_{h_t}^2)},
\label{methodofcontinuity}
\end{gather}
where $u=\frac{1}{\alpha (1-\vert \phi \vert_0^2)}$ and $$\alpha =\frac{\mu}{(2r_2)(2r_2+2)} = \frac{2(2r_1r_2+r_1+r_2)}{4r_2^2+4r_2}>1,$$ if $r_1>r_2$. As we shall see, it is crucial that $\alpha>1$ (for Lemma \ref{upperbound} to hold).\\
\indent At $t=0$ we have a solution $h_0$, i.e., $\psi_0=0$. If we prove that $T$ is open and closed then we will be done. \\
\indent For future use, we record a useful observation :
\begin{lemma}
$ \vert \phi \vert_{h_t}^2 \leq 1$. Thus $\Theta_{h_t} (x) \geq 0 \ \forall \ t \in T, \ x \in X$.
\label{usefulobs}
\end{lemma}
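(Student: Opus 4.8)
The plan is to apply the maximum principle to the function $s:=\vert\phi\vert_{h_t}^2$ on the compact connected Riemann surface $X$, for each fixed $t\in T$. Since $\phi\not\equiv 0$ its maximum value is strictly positive, so if $p\in X$ is a point where $s$ attains its maximum then $\phi(p)\neq 0$. Because $L$ is a line bundle and $\partial s=\langle\nabla^{1,0}\phi,\phi\rangle_{h_t}$ vanishes at the critical point $p$, I can conclude $\nabla^{1,0}_t\phi(p)=0$, so the positive semidefinite term $i\nabla_t^{1,0}\phi\wedge\nabla_t^{0,1}\phi^{\dag_{h_t}}$ vanishes at $p$. Evaluating the Weitzenb\"ock identity \ref{verycrucialWB} there gives $i\pbp s(p)=-i\Theta_{h_t}(p)\,s(p)$, and since $p$ is a maximum, $i\pbp s(p)\le 0$ as a $(1,1)$-form; as $s(p)>0$ this forces $i\Theta_{h_t}(p)\ge 0$.

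Next I would feed this back into the continuity equation \ref{methodofcontinuity}. At $p$ the gradient term drops out, so
$$i\Theta_{h_t}(p)=(1-s(p))\,\frac{\mu u^{1-t}\omega_{\Sigma}}{(2r_2+t\,s(p))(2+2r_2-t\,s(p))}.$$
The numerator $\mu u^{1-t}\omega_{\Sigma}$ is a strictly positive $(1,1)$-form (recall $\mu>0$ and $u>0$), and the first denominator factor is $\ge 2r_2>0$. Provided the second factor $2+2r_2-t\,s(p)$ is also positive, the sign of $i\Theta_{h_t}(p)$ is exactly that of $1-s(p)$; combined with $i\Theta_{h_t}(p)\ge 0$ this yields $s(p)=\max_X s\le 1$, which is the first claim. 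The second assertion then follows at once: once $\vert\phi\vert_{h_t}^2\le 1$ everywhere, the factor $1-\vert\phi\vert_{h_t}^2\ge 0$ while the numerator and both denominator factors of \ref{methodofcontinuity} are positive, so its right-hand side is a non-negative $(1,1)$-form, i.e. $\Theta_{h_t}(x)\ge 0$.

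The main obstacle, and the one genuinely new input, is justifying positivity of the factor $2+2r_2-t\,\vert\phi\vert_{h_t}^2$, since without it the maximum-principle inequality could be met with $s(p)>1$. Here I would exploit that $h_t$ is, by the hypothesis $t\in T$, a \emph{smooth} solution, so the right-hand side of \ref{methodofcontinuity} is a smooth, hence finite, $(1,1)$-form everywhere. Because the numerator is strictly positive and the factor $1-s$ cannot vanish wherever $2+2r_2-ts$ would (if $t\,s=2+2r_2$ then $s\ge 2+2r_2$, so $1-s\neq 0$), finiteness forces the denominator to be nowhere zero. Finally, since $\deg L=1$ the nonzero section $\phi$ has a zero $x_0\in X$, at which $2+2r_2-t\,s(x_0)=2+2r_2>0$; as $X$ is connected and the continuous function $x\mapsto 2+2r_2-t\,s(x)$ never vanishes, it stays positive throughout. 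This closes the gap and completes both parts of the lemma. I expect the only real subtlety to be this denominator argument; the remaining steps are a routine maximum-principle computation built on the identity \ref{verycrucialWB}.
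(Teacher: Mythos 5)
Your argument is correct and is essentially the paper's own proof: the maximum principle applied to $\vert\phi\vert^2_{h_t}$, the line-bundle fact that $\nabla_t^{1,0}\phi$ vanishes at the maximum point $p$, the Weitzenb\"ock identity \ref{verycrucialWB} giving $i\Theta_{h_t}(p)\ge 0$, and substitution back into \ref{methodofcontinuity} to force $\vert\phi\vert^2_{h_t}(p)\le 1$. Your additional verification that $2+2r_2-t\vert\phi\vert^2_{h_t}$ is everywhere positive addresses a point the paper's proof passes over silently (it is built into the continuity method, since openness is proved inside the space of metrics satisfying $2r_2+2-\vert\phi\vert_{h}^2>0$), so this extra care refines rather than changes the argument.
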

\begin{proof}
Recall equation \ref{verycrucialWB} :
\begin{gather}
\pbp \vert \phi \vert_{h_t}^2 = -\Theta_t \vert \phi \vert_{h_t}^2 + \nabla_t ^{1,0} \phi \wedge \nabla_t ^{0,1} \phi ^{t*}. 
\label{seconderivativeofmodphisquared}
\end{gather}
At the maximum point $p$ of $g=\vert \phi \vert_{h_t}^2$, we know that $\nabla g(p)=0$ and $i\pbp g(p) \leq 0$. Therefore $\nabla_t \phi (p)=0$, and since $\phi$ is not identically $0$, we see that $\Theta_t(p) \geq 0$. From equation \ref{methodofcontinuity} it is now clear that $1- \vert \phi \vert_{h_t}^2(p)\geq 0$. Hence $g(x)\leq g(p) \leq 1 \ \forall  \ x\in X$ as desired.
\end{proof}
Now we prove openness of the set $T$.
\begin{lemma}
The set $T\subset [0,1]$ such that equation \ref{methodofcontinuity} has a smooth solution is open. In particular, there exists a $\delta>0$ such that $[0,\delta) \subset T$.
\label{openness}
\end{lemma}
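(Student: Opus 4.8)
\indent The plan is to apply the implicit function theorem in Hölder spaces. Writing every metric as $h=h_0 e^{-\psi}$ and dividing both sides of \ref{methodofcontinuity} by $\omega_{\Sigma}$ (both are $(1,1)$-forms on the Riemann surface $\Sigma$ and hence proportional to $\omega_{\Sigma}$), I would define a smooth map $F:[0,1]\times C^{2,\gamma}(\Sigma)\to C^{0,\gamma}(\Sigma)$ whose zero set is precisely the set of solutions, so that $F(t,\psi_t)=0$ is equivalent to \ref{methodofcontinuity}. Note that the denominator $(2r_2+t\vert \phi \vert_{h}^2)(2+2r_2-t\vert \phi \vert_{h}^2)$ never vanishes: by Lemma \ref{usefulobs} we have $\vert \phi \vert_{h_t}^2\leq 1$, and $r_2\geq 2$, so this factor is bounded below by a positive constant and $F$ is well defined. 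At a point $t_0\in T$ we are handed a smooth solution $\psi_{t_0}$, and the goal is to invert the partial linearisation $L=D_{\psi}F(t_0,\psi_{t_0})$; index-zero Fredholm theory together with elliptic regularity then produces smooth solutions $\psi_t$ for all $t$ near $t_0$, which gives openness of $T$ (and in particular $[0,\delta)\subset T$, since $0\in T$).

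\indent First I would compute $L$. Since $\vert \phi \vert_{h}^2=e^{-\psi}\vert \phi \vert_{h_0}^2$ depends on $\psi$ only to zeroth order while the Chern connection term $\nabla^{1,0}\phi$ depends on $\psi$ only to first order, the entire right-hand side of \ref{methodofcontinuity} is a first-order expression in $\psi$; the only second-order contribution comes from $i\Theta_h=\omega_{\Sigma}+i\pbp\psi$ on the left. Hence $L$ is a genuine second-order linear operator whose leading part is the (positive) Laplacian $\Delta_{\Sigma}$ of $\omega_{\Sigma}$, defined by $i\pbp v=(\Delta_{\Sigma}v)\,\omega_{\Sigma}$, so $L$ is elliptic. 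On the compact surface $\Sigma$ it is therefore Fredholm of index zero, and invertibility reduces to showing $\ker L=\{0\}$.

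\indent The hard part will be this injectivity. Because $L$ carries first- and zeroth-order terms and is \emph{not} self-adjoint, one cannot kill the kernel by integrating $v\,Lv$ by parts as was done in Lemma \ref{opennesspositivity}; instead the maximum principle must be used, applied to the chosen function $v$ itself. The plan is to suppose $Lv=0$ and evaluate at an interior maximum $p$ of $v$, where $\nabla v(p)=0$ and $\Delta_{\Sigma}v(p)\leq 0$, so that $Lv(p)=0$ collapses to $(\text{zeroth-order coefficient})\cdot v(p)=-\Delta_{\Sigma}v(p)\geq 0$; the symmetric computation at a minimum then forces $v\equiv 0$, \emph{provided} the zeroth-order coefficient of $L$ is nonpositive. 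Establishing this sign is the crux. Differentiating the factor $(1-\vert \phi \vert_{h}^2)/[(2r_2+t\vert \phi \vert_{h}^2)(2+2r_2-t\vert \phi \vert_{h}^2)]$ in $\psi$ and using that $(1-s)/[(2r_2+ts)(2+2r_2-ts)]$ is decreasing in $s=\vert \phi \vert_{h}^2$ (its denominator is increasing since $ts\leq 1$, while its numerator decreases) gives the correct sign for the explicit $s$-dependence; but one must also control the zeroth-order term arising from the variation of the Chern connection inside $i\nabla^{1,0}\phi\wedge\nabla^{0,1}\phi^{\dag_h}$. Here the two bounds furnished by Lemma \ref{usefulobs}, namely $\vert \phi \vert_{h_{t_0}}^2\leq 1$ and $\Theta_{h_{t_0}}\geq 0$, are exactly what pin down the sign. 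Once $\ker L=\{0\}$ is established, the implicit function theorem closes the argument.
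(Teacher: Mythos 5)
Your framework---implicit function theorem in H\"older spaces, ellipticity of the linearisation (the second-order contributions from varying $\nabla_h^{1,0}\phi\wedge\nabla^{0,1}\phi^{\dag_h}$ cancel, so the principal part is indeed the Laplacian), and index-zero Fredholm theory reducing everything to triviality of a kernel---is the same as the paper's, and your remark that the operator is not self-adjoint, so the energy argument of Lemma \ref{opennesspositivity} is unavailable, is also correct. The gap is at exactly the step you call the crux: the zeroth-order coefficient does \emph{not} have the sign you need, and the two bounds of Lemma \ref{usefulobs} cannot make it nonpositive. Write $s=\vert\phi\vert_h^2$, $I=2r_2+ts$, $II=2+2r_2-ts$. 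Varying the right-hand side of \ref{methodofcontinuity} along $w$, the Weitzenb\"ock identity \ref{verycrucialWB} gives $\delta_w\bigl(\nabla_h^{1,0}\phi\wedge\nabla^{0,1}\phi^{\dag_h}\bigr)=\pbp(-sw)+s\,\pbp w-\Theta_h s w$; at a critical point $p$ of $w$ all first-order terms drop out, leaving $D_\psi L(w)(p)=i\pbp w(p)+c(p)\,w(p)$ with
\begin{align*}
c&=-\left[s+\frac{2ts(1-s)(1-ts)}{(I)(II)}\right]\frac{\mu u^{1-t}\omega_\Sigma}{(I)(II)}+\left[1-2s-\frac{2ts(1-s)(1-ts)}{(I)(II)}\right]\frac{it\,\nabla_h^{1,0}\phi\wedge\nabla^{0,1}\phi^{\dag_h}}{(I)(II)}.
\end{align*}
The first bracket carries the good (negative) sign---this is the monotonicity in $s$ you invoke---but the second bracket is close to $1$ wherever $\vert\phi\vert_h^2$ is small, and it multiplies the nonnegative form $it\,\nabla_h^{1,0}\phi\wedge\nabla^{0,1}\phi^{\dag_h}$. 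Since $\deg L=1$, the section $\phi$ has a simple zero $p_0$; there $s=0$, $\nabla\phi\neq 0$, and $c(p_0)=\frac{it\,\nabla_h^{1,0}\phi\wedge\nabla^{0,1}\phi^{\dag_h}}{(I)(II)}>0$ for every $t>0$. So for every $t_0>0$ the coefficient is strictly positive on a neighbourhood of $p_0$, a kernel element whose maximum happens to lie there produces no contradiction, and the injectivity argument collapses precisely where it matters.

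This is the difficulty the paper's proof is engineered around, and the mechanism is worth noting. The paper proves triviality of the kernel of the \emph{adjoint} $(D_\psi L)^{*}$ (equivalent to your reduction, since index zero gives $\ker L=\{0\}\iff\ker L^{*}=\{0\}$), and---this is the essential point---applies the maximum principle not to the kernel element $v$ itself but to the weighted function $\tilde v=vI=v(2r_2+t\vert\phi\vert_h^2)$. At a maximum of $\tilde v$ one has $\bar\partial v=-tv\,\bar\partial\vert\phi\vert_h^2/I$, and substituting this relation back into the equation generates extra terms proportional to $\nabla_h^{1,0}\phi\wedge\nabla^{0,1}\phi^{\dag_h}$ with the favourable sign; the paper then checks by explicit numerical estimates (using $r_2\geq 2$, $\vert\phi\vert_h^2\leq 1$, $t\leq 1$) that the resulting coefficients $A$ and $B$ are strictly positive, and finally uses $\deg L=1$ (so $\phi$ and $\nabla\phi$ never vanish simultaneously) to conclude $v\leq 0$ at the maximum of $\tilde{v}$, and $v\geq 0$ at the minimum by symmetry. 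Some device of this kind, absorbing the positive zeroth-order term concentrated near the zero of $\phi$, is indispensable; the unweighted maximum principle you propose cannot close the argument. (A minor, fixable point: your map $F$ is not defined on all of $[0,1]\times C^{2,\gamma}$, since $2+2r_2-t\vert\phi\vert_h^2$ need not stay positive for arbitrary $\psi$; as in the paper, one should restrict to the open subset of $C^{2,\gamma}$ where it is positive, which suffices for the implicit function theorem.)
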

\begin{proof}
Assume that $t_0 \in T$. We want to prove that $(t_0-\delta, t_0+\delta) \subset T$ for some $\delta>0$ (depending on $t_0$). To this end, let $C_1$ denote the open subset of the space of $C^{2,\alpha}$ functions $\mathcal{C}$ consisting of functions $\psi$ such that $2r+2- \vert \phi \vert_{h_0 e^{-\psi}}^2 > 0$. Let $C_2$ be the space of $C^{0,\alpha}$ $(1,1)$-forms. Define the metric $h$ by $h=h_0e^{-\psi}$. Consider the map $L : C_1 \times \mathbb{R} \rightarrow C_2$ given by
\begin{gather}
L(\psi,t) = i\Theta_{h} - (1- \vert \phi \vert_{h}^2)\frac{\mu u^{1-t} \omega_{\Sigma}+it \nabla_h ^{1,0} \phi \wedge \nabla_h^{0,1} \phi^{\dag_{h}}}{(2r_2+t \vert \phi \vert_{h}^2)(2+2r_2-t \vert \phi \vert_{h}^2)}  \nonumber \\
=i\Theta_h - (1- \vert \phi \vert_h^2) J,
\end{gather}
where $J= \frac{\mu u^{1-t} \omega_{\Sigma}+it \nabla_h ^{1,0} \phi \wedge \nabla_h^{0,1} \phi^{\dag_{h}}}{(2r_2+t \vert \phi \vert_{h}^2)(2+2r_2-t \vert \phi \vert_{h}^2)} $. $L$ is clearly a smooth map between Banach manifolds and $L(\psi_{t_0},t_0)=0$. If we prove that $D_\psi L_{\psi_{t_0},t_0} : \mathcal{C}  \rightarrow C_2$ is an isomorphism, then by the implicit function theorem of Banach manifolds, $(t_0-\delta,t_0+\delta)\subset T$ for some $\delta>0$ as desired. Denote $2r_2+t \vert \phi \vert_{h}^2$ by $I$ and   $2+2r_2-t \vert \phi \vert_{h}^2$ by $II$. Note that $II-I = 2-2t \vert \phi \vert_h^2$. For the sake of convenience we drop the $t$ subscript from $h_t$ from now onwards.\\
\indent Indeed, the derivative along a $C^{2,\alpha}$ function $w$ is 
\begin{gather}
D_{\psi} L_{\psi,t} (w) = i\pbp w - \vert \phi \vert_h ^2 wJ -(1- \vert \phi \vert_h^2)\frac{i t \delta_w \nabla_h ^{1,0}\phi\wedge \nabla_h ^{0,1} \phi ^{h*}}{(I)(II)}\nonumber \\
- w(1- \vert \phi \vert_h^2)t  \vert \phi \vert_h^2J \left ( \frac{1}{I} -\frac{1}{II}\right )
\label{firstcomp}
\end{gather}

Now we use equation \ref{seconderivativeofmodphisquared} to see that
\begin{align}
\delta_w( \nabla_h ^{1,0}\phi\wedge \nabla_h ^{0,1} \phi ^{h*}) &= \delta_w  \pbp \vert \phi \vert_h^2 + \delta_w (\Theta_h \vert \phi \vert_h^2) \nonumber \\
&=\pbp (-\vert \phi \vert_h^2 w) +\pbp w\vert \phi \vert_h^2 -\Theta_h \vert \phi \vert_h ^2 w. 
\label{variationofgradientterm}
\end{align}
Using \ref{variationofgradientterm} in \ref{firstcomp} we get, 
\begin{gather}
D_\psi L_{\psi,t} (w) =i\pbp w - \vert \phi \vert_h ^2 wJ 
- (1- \vert \phi \vert_h^2)(1- t\vert \phi \vert_h^2) \frac{2wt \vert \phi \vert_h^2 J}{(I)(II)} \nonumber \\
-(1- \vert \phi \vert_h^2)\frac{i t (\pbp (-\vert \phi \vert_h^2 w) +\pbp w\vert \phi \vert_h^2 -\Theta_h \vert \phi \vert_h ^2 w)}{(I)(II)}.
\end{gather} 
By the Fredholm alternative, if we prove that the kernel of $DL^{*}(v)$ is trivial, then we will be done. Indeed, a calculation shows that
\begin{align}
(D_\psi L_{\psi,t})^{*} (v)&=i\pbp v- \vert \phi \vert_h ^2 vJ 
- (1- \vert \phi \vert_h^2)(1- t\vert \phi \vert_h^2) \frac{2vt \vert \phi \vert_h^2 J}{(I)(II)}+(1- \vert \phi \vert_h^2)\frac{it \Theta_h \vert \phi \vert_h^2 v}{(I)(II)} \nonumber \\
&-it  \Bigg (\vert \phi \vert_h^2\pbp \left (\frac{-(1- \vert \phi \vert_h^2)v}{(I)(II)} \right )  +\pbp \Big (\frac{v\vert \phi \vert_h^2 (1- \vert \phi \vert_h^2)}{(I)(II)} \Big ) \Bigg ) \nonumber \\
&=i\pbp v- \vert \phi \vert_h ^2 vJ 
- (1-\vert \phi \vert_h^2)(1- t\vert \phi \vert_h^2) \frac{2vt \vert \phi \vert_h^2 J}{(I)(II)}+(1- \vert \phi \vert_h^2)\frac{it \Theta_h \vert \phi \vert_h^2 v}{(I)(II)} \nonumber \\
&-i t \Bigg (\frac{v (1- \vert \phi \vert_h^2)}{(I)(II)}  \pbp \vert \phi \vert_h^2 + \partial \left ( \frac{v(1- \vert \phi \vert_h^2)}{(I)(II)} \right ) \wedge \bar{\partial} \vert \phi \vert_h^2 + \partial \vert \phi \vert_h^2 \wedge \bar{\partial} \left ( \frac{v(1- \vert \phi \vert_h^2)}{(I)(II)} \right )\Bigg ).
\label{adjointmore}
\end{align} 
Using \ref{seconderivativeofmodphisquared} and the fact that $i\Theta_h = (1- \vert \phi \vert_h^2) J$ we see that
\begin{align}
(D_\psi L_{\psi,t})^{*} (v) &= i\pbp v- \vert \phi \vert_h ^2 vJ -(1- \vert \phi \vert_h^2)(1-t) \frac{2vt \vert \phi \vert_h^4 J}{(I)(II)}
-(1- \vert \phi \vert_h^2)\frac{it \nabla^{1,0} \phi \wedge \nabla^{0,1} \phi^{*_h} v}{(I)(II)} \nonumber \\
&-it  \Bigg (\partial \left ( \frac{v(1- \vert \phi \vert_h^2)}{(I)(II)} \right ) \wedge \bar{\partial} \vert \phi \vert_h^2 + \partial \vert \phi \vert_h^2 \wedge \bar{\partial} \left ( \frac{v(1- \vert \phi \vert_h^2)}{(I)(II)} \right )\Bigg ) \nonumber \\
&= i\pbp v- \vert \phi \vert_h ^2 vJ -(1- \vert \phi \vert_h^2)(1-t) \frac{2vt \vert \phi \vert_h^4 J}{(I)(II)}
-(1- \vert \phi \vert_h^2)\frac{it \nabla^{1,0} \phi \wedge \nabla^{0,1} \phi^{*_h} v}{(I)(II)} \nonumber \\
&-it  \Bigg (\frac{1- \vert \phi \vert_h^2}{(I)(II)}\partial v\wedge \bar{\partial}\vert \phi \vert_h^2 +v\partial \left ( \frac{1- \vert \phi \vert_h^2}{(I)(II)} \right ) \wedge \bar{\partial} \vert \phi \vert_h^2 \nonumber \\
 &+ v\partial \vert \phi \vert_h^2 \wedge \bar{\partial} \left ( \frac{1- \vert \phi \vert_h^2}{(I)(II)} \right )+ \frac{1- \vert \phi \vert_h^2}{(I)(II)}\partial \vert \phi \vert_h^2 \wedge \bar{\partial} v \Bigg ).
\label{adjointmoremore}
\end{align}
Now notice that 
\begin{align}
\partial \left ( \frac{1- \vert \phi \vert_h^2}{(I)(II)} \right ) &= -\frac{1}{(I)(II)}\partial \vert \phi \vert_h^2 +\frac{1- \vert \phi \vert_h^2}{(I)(II)} \left (-\frac{\partial I}{I}-\frac{\partial (II)}{II}\right)  \nonumber \\
&=-\frac{1}{(I)(II)}\partial \vert \phi \vert_h^2 -\frac{2(1- \vert \phi \vert_h^2)(1- t\vert \phi \vert_h^2) \partial \vert \phi \vert_h^2}{(I)^2(II)^2}.
\label{anintermediatestep}
\end{align}
Using this in \ref{adjointmoremore} we see that for $v\in ker(DL^{*})$,
\begin{align}
0 &= i\pbp v- \vert \phi \vert_h ^2 vJ  -(1- \vert \phi \vert_h^2)(1- t) \frac{2vt \vert \phi \vert_h^4 J}{(I)(II)}
-(1- \vert \phi \vert_h^2)\frac{it \nabla^{1,0} \phi \wedge \nabla^{0,1} \phi^{*_h} v}{(I)(II)} \nonumber \\
&-it  \Bigg (\frac{1- \vert \phi \vert_h^2}{(I)(II)}\partial v\wedge \bar{\partial}\vert \phi \vert_h^2 + \frac{1- \vert \phi \vert_h^2}{(I)(II)}\partial \vert \phi \vert_h^2 \wedge \bar{\partial} v  \nonumber \\
&-2 \frac{ v\partial \vert \phi \vert_h^2 \wedge \bar{\partial} \vert \phi \vert_h^2}{(I)(II)} \left (1+\frac{2(1- \vert \phi \vert_h^2)(1- t\vert \phi \vert_h^2)}{(I)(II)} \right ) \Bigg ).
\label{adjointevenmore}
\end{align}
\indent Let $\tilde{v}=v I$. We calculate the derivatives of $\tilde{v}$ now.
\begin{gather}
\bar{\partial} \tilde{v} = tv\bar{\partial}\vert \phi \vert_h^2 +I \bar{\partial} v \nonumber \\
\Rightarrow \partial \bar{\partial} \tilde{v} =  t \partial v \wedge \bar{\partial} \vert \phi \vert_h^2+tv \partial \bar{\partial} \vert \phi \vert_h^2 + t\partial \vert \phi \vert_h^2 \wedge \bar{\partial}v + I \partial \bar{\partial} v.
\label{derivativesoftildev}
\end{gather}
 At the point $p$ where the maximum of $\tilde{v}$ occurs, $i\pbp \tilde{v} (p)\leq 0$ and $\partial \tilde{v}(p)=\bar{\partial} \tilde{v}(p)=0$. The latter observation implies that 
\begin{gather}
\bar{\partial} v(p) =-\frac{tv\bar{\partial}\vert \phi \vert_h^2}{I} 
\label{atthemaxderi}
\end{gather} 
Hence,
\begin{gather}
0\geq -2i\frac{t^2v\partial\vert \phi \vert_h^2 \wedge \bar{\partial}\vert \phi \vert_h^2}{I}(p)-tvi\Theta_h \vert \phi \vert_h^2(p)+itv\nabla^{1,0} \phi \wedge \nabla^{0,1} \phi^{\dag_h} (p) + Ii\partial \bar{\partial} v(p) \nonumber \\
\Rightarrow 0 \geq -2i\frac{t^2v\vert \phi \vert_h^2 \nabla^{1,0} \phi  \wedge \nabla^{0,1} \phi ^{\dag_h}}{I}(p)-tvi\Theta_h \vert \phi \vert_h^2(p)+itv\nabla^{1,0} \phi \wedge \nabla^{0,1} \phi^{\dag_h} (p) + Ii\partial \bar{\partial} v(p)
\label{atthemax}
\end{gather}
We suppress the point $p$ in what follows. Substituting \ref{adjointevenmore} in the above inequality and dividing both sides by $I$ we get the following.
\begin{align*}
0&\geq -2i\frac{t^2v\vert \phi \vert_h^2 \nabla^{1,0} \phi  \wedge \nabla^{0,1} \phi ^{\dag_h}}{I^2}-tv\frac{(1-\vert \phi \vert_h^2)J \vert \phi \vert_h^2}{I}+\frac{itv}{I}\nabla^{1,0} \phi \wedge \nabla^{0,1} \phi^{\dag_h} \nonumber \\
&+ \vert \phi \vert_h ^2 vJ  +(1- \vert \phi \vert_h^2)(1- t) \frac{2vt \vert \phi \vert_h^4 J}{(I)(II)}
+(1- \vert \phi \vert_h^2)\frac{it \nabla^{1,0} \phi \wedge \nabla^{0,1} \phi^{*_h} v}{(I)(II)} \nonumber \\
&+it  \Bigg (\frac{1- \vert \phi \vert_h^2}{(I)(II)}\partial v\wedge \bar{\partial}\vert \phi \vert_h^2 + \frac{1- \vert \phi \vert_h^2}{(I)(II)}\partial \vert \phi \vert_h^2 \wedge \bar{\partial} v  \nonumber \\
 &-2 \frac{ v\partial \vert \phi \vert_h^2 \wedge \bar{\partial} \vert \phi \vert_h^2}{(I)(II)} \left (1+\frac{2(1- \vert \phi \vert_h^2)(1- t\vert \phi \vert_h^2)}{(I)(II)} \right ) \Bigg ).
\end{align*}
Using \ref{atthemaxderi} in the above equation we may simplify it.
\begin{align}
0&\geq -2i\frac{t^2v\vert \phi \vert_h^2 \nabla^{1,0} \phi  \wedge \nabla^{0,1} \phi ^{\dag_h}}{I^2}-tv\frac{(1-\vert \phi \vert_h^2)J \vert \phi \vert_h^2}{I}+i\frac{tv}{I}\nabla^{1,0} \phi \wedge \nabla^{0,1} \phi^{\dag_h} \nonumber \\
&+ \vert \phi \vert_h ^2 vJ  +(1- \vert \phi \vert_h^2)(1- t) \frac{2vt \vert \phi \vert_h^4 J}{(I)(II)}
+(1- \vert \phi \vert_h^2)\frac{it \nabla^{1,0} \phi \wedge \nabla^{0,1} \phi^{*_h} v}{(I)(II)} \nonumber \\
&-\frac{itv}{I}  \Bigg (2t\frac{1- \vert \phi \vert_h^2}{(I)(II)}\vert \phi \vert_h^2 \nabla^{1,0} \phi \wedge \nabla^{0,1}\phi^{\dag_h} \nonumber\\
 &+2 \frac{ \vert \phi \vert_h^2 \nabla^{1,0} \phi \wedge \nabla^{0,1}\phi^{\dag_h}}{II} \left (1+\frac{2(1- \vert \phi \vert_h^2)(1- t\vert \phi \vert_h^2)}{(I)(II)} \right ) \Bigg ) \nonumber \\
&= A i\frac{tv}{I}\nabla^{1,0} \phi \wedge \nabla^{0,1}\phi^{\dag_h} + B\frac{\mu u^{1-t}\omega_{\Sigma}}{(I)(II)}\vert \phi \vert_h^2 v,
\label{semifinalineq}
\end{align}
where
\begin{align*}
A &=-\frac{2t\vert \phi \vert_h^2}{I}-\frac{t(1-\vert\phi \vert_h^2)\vert\phi \vert_h^2}{(I)(II)}+1+\frac{\vert \phi \vert_h^2}{II}+\frac{2(1-\vert \phi \vert_h^2)(1-t)t\vert \phi \vert_h^4}{(I)(II)^2}+\frac{1-\vert \phi \vert_h^2}{II}\nonumber \\
&-\frac{2t(1-\vert \phi \vert_h^2)\vert \phi \vert_h^2}{(I)(II)}-\frac{2\vert \phi \vert_h^2}{II} \left (1+\frac{2(1- \vert \phi \vert_h^2)(1- t\vert \phi \vert_h^2)}{(I)(II)} \right )  \\
\Rightarrow A &= -\frac{2t\vert \phi \vert_h^2}{I}-\frac{t(1-\vert\phi \vert_h^2)\vert\phi \vert_h^2}{(I)(II)}+1+\frac{2(1-\vert \phi \vert_h^2)(1-t)t\vert \phi \vert_h^4}{(I)(II)^2}-\frac{2t(1-\vert \phi \vert_h^2)\vert \phi \vert_h^2}{(I)(II)}\nonumber \\
&+\frac{1}{II} \left (1-2\vert \phi \vert_h^2-\frac{4\vert \phi \vert_h^2(1- \vert \phi \vert_h^2)(1- t\vert \phi \vert_h^2)}{(I)(II)} \right )\\
&\geq -\frac{2}{I}-\frac{1}{4(I)(II)}+1-\frac{1}{2(I)(II)}+\frac{1}{II}\left (-1-\frac{1}{(I)(II)} \right )\\
&\geq -\frac{1}{2}-\frac{1}{80}+1-\frac{1}{40}-\frac{1}{5} \left ( 1+ \frac{1}{20}\right ) >0,
\end{align*}
and \\
\begin{align*}
B &= -\frac{t(1-\vert \phi \vert_h^2)}{I} +1+\frac{2(1-\vert \phi \vert_h^2)(1-t)t\vert \phi \vert_h^2}{(I)(II)} \\
&\geq -\frac{1}{4}+1>0.
\end{align*}
Going back to \ref{semifinalineq} we see that $v(p)\leq 0$. (Indeed, since $L$ has degree $1$, either $\vert \nabla \phi \vert (p) \neq 0$ or $\phi (p) \neq 0$.) Thus, $\tilde{v}_{max} \leq 0$. The same argument shows that $\tilde{v}_{min} \geq 0$. Hence $\tilde{v} \equiv 0 \equiv v$, thus proving that $ker(D_{\psi}L_{\psi,t}^{*})=\{0\}$. Therefore $T$ is open.
\end{proof}

We prove the closedness of $T$ assuming openness near $t=0$, i.e., $[0,\delta) \subset T$ for some $\delta>0$. Suppose $t_n \rightarrow t$ is a sequence such that $h_{t_n}=h_0 e^{-\psi_{t_n}}$ solves \ref{methodofcontinuity}. We need to show that a subsequence $\psi_{t_{n_k}}\rightarrow \psi_t$ in $C^{2,\alpha}$ and that $\psi_t$ is smooth. If we prove that $\Vert \psi_t \Vert_{C^{2,\beta}} \leq C$ where $C$ is independent of $t$, then by the Arzela-Ascoli theorem, for $\alpha<\beta$ we have a convergent subsequence. Lemma \ref{usefulobs} together with elliptic regularity shows that $\psi_t$ is smooth. So we just need to prove $C^{2,\beta}$ \emph{a priori} estimates on $\psi_t$. From now onwards we suppress the dependence of $\psi_t$ on $t$. The following lemma reduces the estimates to a $C^1$ estimate.
\begin{lemma}
Suppose $\Vert \psi \Vert_{C^1}\leq C$. Then $\Vert \psi \Vert_{C^{2,\beta}}\leq C$.
\end{lemma}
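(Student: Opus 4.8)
The plan is to exploit the fact that on the Riemann surface $\Sigma$ the equation \ref{methodofcontinuity} is \emph{semilinear}: the top-order part $i\pbp\psi$ is a fixed, $t$-independent elliptic operator (a constant multiple of the Laplacian of $\omega_{\Sigma}$), while the right-hand side depends on $\psi$ only through $\vert\phi\vert_{h_0}^2 e^{-\psi}$ and through the gradient term $\nabla_h^{1,0}\phi\wedge\nabla_h^{0,1}\phi^{\dag_h}$. Comparing $(1,1)$-forms, I would rewrite the equation as $i\pbp\psi = f$, where $f$ is the right-hand side minus $i\Theta_{h_0}$. Since the Chern-connection coefficient of $h=h_0 e^{-\psi}$ equals $\partial\log h_0 - \partial\psi$, the section derivative $\nabla_h^{1,0}\phi$ is affine in $\partial\psi$, so $f$ is a smooth function of $x$, $\psi$, and $\partial\psi$ that is at most quadratic in $\partial\psi$. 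By Lemma \ref{usefulobs} we have $\vert\phi\vert_h^2\le 1$, and since $r_2\ge 2$ the denominators $I=2r_2+t\vert\phi\vert_h^2$ and $II=2+2r_2-t\vert\phi\vert_h^2$ stay bounded above and away from $0$; hence $f$ has no blow-up, and the hypothesis $\Vert\psi\Vert_{C^1}\le C$ gives a uniform $C^0$ bound on $f$.

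The crucial step is to upgrade the regularity by a detour through $L^p$ theory, since $f$ is a priori only continuous (it involves $\partial\psi$, which the $C^1$ bound controls only in sup-norm). Because $f\in L^p(\Sigma)$ for every finite $p$ with bound independent of $t$, the $L^p$ elliptic estimate for the fixed Laplacian on the compact surface $\Sigma$ yields $\Vert\psi\Vert_{W^{2,p}}\le C$. As $\Sigma$ is real two-dimensional, the Sobolev embedding $W^{2,p}\hookrightarrow C^{1,\beta}$ holds for any $p>2$ with $\beta=1-2/p$, producing a uniform $C^{1,\beta}$ bound on $\psi$.

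With $\psi\in C^{1,\beta}$ in hand, the first derivatives $\partial\psi$ now lie in $C^{0,\beta}$, so both $e^{-\psi}$ and the quadratic gradient term are products and compositions of $C^{0,\beta}$ functions with the smooth nonvanishing denominators; hence $f\in C^{0,\beta}$ with a uniform bound. The Schauder estimate for the fixed elliptic operator $i\pbp$ then gives $\Vert\psi\Vert_{C^{2,\beta}}\le C(\Vert f\Vert_{C^{0,\beta}}+\Vert\psi\Vert_{C^0})\le C$, completing the proof.

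The main obstacle is precisely the appearance of $\partial\psi$ in the right-hand side, which creates a circularity blocking a direct application of Schauder from the $C^1$ bound. The $L^p$-plus-Sobolev intermediate step is exactly what breaks this circularity, and it works only because $\dim_{\mathbb{R}}\Sigma=2$, so that $W^{2,p}$ embeds into $C^{1,\beta}$ for $p$ just above $2$.
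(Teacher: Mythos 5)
Your proposal is correct and follows essentially the same route as the paper's proof: a uniform $C^0$ bound on the right-hand side (via Lemma \ref{usefulobs} and the $C^1$ hypothesis), then $L^p$ elliptic regularity giving $W^{2,p}$ bounds, Sobolev embedding into $C^{1,\beta}$, and finally Schauder once the right-hand side is seen to be in $C^{0,\beta}$. The only difference is that you spell out explicitly why the gradient term forces the $L^p$ detour, which the paper leaves implicit.
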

\begin{proof}
Under the hypotheses, using Lemma \ref{usefulobs} it is clear that the right hand side of \ref{methodofcontinuity} is uniformly bounded in $C^0$. Therefore, by the $L^p$ regularity of elliptic equations, $\psi$ is bounded uniformly in $W^{2,p}$ for all large $p$. Using the Sobolev embedding theorem we see that $\Vert \psi \Vert_{C^{1,\beta}}\leq C$. Thus the right hand side is in $C^{0,\beta}$. By the Schauder estimates we are done.
\end{proof}

Now we reduce the $C^1$ estimate to a uniform estimate on $\psi$.
\begin{lemma}
If $\Vert \psi \Vert_{C^0} \leq C$, then $\Vert \psi \Vert_{C^1(X)} \leq \tilde{C}$.
\label{desiredconeestimate}
\end{lemma}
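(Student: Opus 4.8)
The plan is to upgrade the assumed $C^0$ bound to a $C^1$ bound by a Bernstein-type maximum principle argument, using that the Monge-Amp\`ere Vortex equation is, on the Riemann surface $X$, a scalar quasilinear equation for $\psi$ with at most quadratic growth in $\nabla\psi$ and a uniformly bounded, favourably signed leading coefficient.

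First I would rewrite \ref{methodofcontinuity} as a scalar equation for $\psi$. Writing $i\Theta_{h_t}=(1+\Delta_{\omega}\psi)\,\omega_{\Sigma}$, where $\Delta_\omega$ is the Laplacian of $\omega_\Sigma$ normalised by $i\pbp\psi=(\Delta_\omega\psi)\,\omega_\Sigma$, and using that the Chern connection of $h=h_0e^{-\psi}$ satisfies $\nabla_{h}^{1,0}\phi=\nabla_{0}^{1,0}\phi-\phi\,\partial\psi$, so that $i\nabla_h^{1,0}\phi\wedge\nabla_h^{0,1}\phi^{\dag_h}$ is the nonnegative multiple of $\omega_\Sigma$ with coefficient $\vert\nabla_0^{1,0}\phi-\phi\,\partial\psi\vert_h^2$ (up to fixed smooth factors), the equation takes the form $\Delta_\omega\psi=F(x,\psi,\nabla\psi)$. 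The two structural facts I would extract are: by Lemma \ref{usefulobs} one has $\vert\phi\vert_h^2\le1$, while the $C^0$ bound gives $\vert\phi\vert_h^2=\vert\phi\vert_0^2e^{-\psi}$ bounded above and below, so every coefficient of $F$ is uniformly controlled; and the dependence of $F$ on $\nabla\psi$ is quadratic, the coefficient of the top $\vert\nabla\psi\vert^2$ term being a nonnegative multiple of $\vert\phi\vert_h^2$. Hence $\vert F\vert\le C(1+\vert\nabla\psi\vert^2)$ with $C$ independent of $t$.

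Next I would run the maximum principle on $w=\vert\nabla\psi\vert_\omega^2$ (or a weighted variant $w\,e^{-A\psi}$). The Bochner formula gives $\Delta_\omega w=2\vert\nabla^2\psi\vert^2+2\langle\nabla\psi,\nabla\Delta_\omega\psi\rangle+2K\,w$, with $K$ the bounded Gauss curvature of $\omega_\Sigma$. Substituting $\Delta_\omega\psi=F$ and differentiating, the term $\langle\nabla\psi,\nabla F\rangle$ generates a contribution of order $\vert\nabla\psi\vert^2\vert\nabla^2\psi\vert$; at an interior maximum $p$ of $w$ the identity $\nabla w(p)=0$ annihilates the most dangerous piece $\langle\nabla\psi,\nabla\vert\nabla\psi\vert^2\rangle$, and the remainder is absorbed into the positive term $2\vert\nabla^2\psi\vert^2\ge(\Delta_\omega\psi)^2=F^2$ by Young's inequality. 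Here the Weitzenb\"ock identity \ref{verycrucialWB}, which expresses $\nabla_h^{1,0}\phi\wedge\nabla_h^{0,1}\phi^{\dag_h}$ through $\pbp\vert\phi\vert_h^2$ and $\Theta_h$, is what lets me rewrite the derivatives of the gradient term in $F$ in controlled form. The conclusion $w(p)\le\tilde C$, and hence $\Vert\nabla\psi\Vert_{C^0(X)}\le\tilde C$, then follows.

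The main obstacle will be closing this estimate against the quadratic gradient term: the helpful term $(\Delta_\omega\psi)^2$ and the dangerous cubic term are both of order $\vert\nabla\psi\vert^4$, so absorption succeeds only after choosing the weight constant $A$ (and, if needed, adding a multiple of $\vert\phi\vert_h^2$ to the test function) so as to exploit the definite sign of the leading coefficient and the negative term produced by differentiating $e^{-A\psi}$. A secondary point is the degeneration of that leading coefficient at the (finitely many) zeros of $\phi$, where $\vert\phi\vert_h^2\to0$; there, however, the gradient dependence of $F$ disappears and the equation reduces to $\Delta_\omega\psi=$ bounded, so interior Schauder estimates supply the gradient bound directly, and the two regimes are patched together.
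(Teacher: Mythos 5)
Your reduction to a scalar equation $\Delta_\omega\psi=F(x,\psi,\nabla\psi)$ with $\vert F\vert\le C(1+\vert\nabla\psi\vert^2)$ is correct, but the maximum-principle step does not close, and the gap sits exactly where you flag ``the main obstacle.'' At the maximum of your test function, the only available quartic terms with a favourable sign are $F^2$ (via $2\vert\nabla^2\psi\vert^2\ge(\Delta_\omega\psi)^2$) and the weight term, and these control $\vert\nabla\psi\vert^4$ only with coefficients $c^2$ and $Ac$ respectively, where $c=\frac{t(1-\vert\phi\vert_h^2)\vert\phi\vert_h^2}{(2r_2+t\vert\phi\vert_h^2)(2+2r_2-t\vert\phi\vert_h^2)}$ is the coefficient of the quadratic gradient term. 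The harmful quartic terms, coming from $\langle\nabla\psi,\nabla F\rangle$ after Young's inequality, from $F_\psi\vert\nabla\psi\vert^2$, and from the weight, carry coefficients of order $c^2/\epsilon$, $C\,c$, and $A^2$ --- the last one independent of $c$, and $C\,c$ dominating $c^2$ when $c$ is small. Since $\deg L=1$ and $\phi\not\equiv 0$, the section $\phi$ does vanish somewhere, so $c$ degenerates to $0$ on a nonempty set (your earlier assertion that $\vert\phi\vert_h^2$ is ``bounded above and below'' is false for this reason: only the factor $e^{-\psi}$ is two-sidedly bounded). Consequently no uniform choice of $A$, $\epsilon$, or of a modified test function closes the inequality over the whole surface. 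Your fallback --- Schauder estimates near the zeros of $\phi$ --- is circular: in a punctured neighbourhood of a zero the right-hand side is only bounded by $C(1+\vert\phi\vert_h^2\vert\nabla\psi\vert^2)$, i.e. a small coefficient times a possibly huge gradient, so it is not bounded until one already has the very gradient estimate being sought; and a maximum-principle argument cannot prevent the maximum of the test function from landing in this transition region, so the two regimes cannot be patched.

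The paper's proof avoids absorption altogether by a blow-up argument: supposing $M_n=\max\vert d\psi_n\vert\to\infty$ at points $p_n$, it rescales $\tilde\psi_n(\tilde z)=\psi_n(p_n+\tilde z/M_n)$, so that $\vert\tilde d\tilde\psi_n\vert\le 1$ \emph{by construction}. In the rescaled equation the quadratic gradient term is then automatically bounded (the only structural inputs are $\vert\phi\vert_h^2\le 1$, which keeps the denominators bounded below, and the boundedness of the rescaled one-form $\partial\psi$), so interior $L^p$ and Schauder estimates give uniform $C^{2,\beta}$ bounds on every ball; a diagonal subsequence converges to an entire function $\tilde\psi$ on $\mathbb{C}$ which is bounded --- this is where the $C^0$ hypothesis enters --- and subharmonic, because the right-hand side of the equation is a nonnegative $(1,1)$-form. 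Liouville's theorem for subharmonic functions bounded above on $\mathbb{C}$ forces $\tilde\psi$ to be constant, contradicting $\vert\tilde d\tilde\psi\vert(0)=1$. The essential difference is that rescaling normalises the gradient to size one \emph{before} any estimate is attempted, so no lower bound on the degenerate coefficient $c$ is ever needed. A maximum-principle proof of this lemma would require genuinely heavier machinery for equations with natural quadratic growth (of Ladyzhenskaya--Ural'tseva type), not the Bernstein computation you outline.
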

\begin{proof}
 To produce a contradiction, assume that there exists a sequence $\psi_n$ (corresponding to $t_n \rightarrow t$) such that $\max_X \vert d\psi_n \vert =\vert d \psi_n(p_n) \vert=M_n \rightarrow \infty$. Up to a subsequence, we may assume that $p_n \rightarrow p$. Choose $n$ large enough so that $p_n, p$ lie in a coordinate ball $\tilde{B}$ centred at $p$ with coordinates $z$ (with $z=0$ corresponding to $p$). Define $\tilde{\psi}_n (\tilde{z}) = \psi(p_n+\frac{\tilde{z}}{M_n})$. Now $\vert \tilde{d} \tilde{\psi}_n \vert \leq 1=\vert \tilde{d} \tilde{\psi}_n \vert(0)$. Note that 
\begin{gather}
\frac{\partial\tilde{\psi}_n}{\partial \tilde{z}} = \frac{1 }{M_n}\frac{\partial \psi}{\partial z}, \frac{\partial\tilde{\psi}_n}{\partial \overline{\tilde{z}}} = \frac{1 }{M_n}\frac{\partial \psi}{\partial \bar{z}} \nonumber \\
\frac{\partial^2\tilde{\psi}_n}{\partial \tilde{z}\partial \overline{\tilde{z}}} = \frac{1 }{M_n^2}\frac{\partial \psi}{\partial z \partial \bar{z}}.
\label{smallcalculations}
\end{gather}
 We abuse notation from this point onwards and denote the functions $\frac{\omega_{\Sigma}}{idz\wedge d\bar{z}}= \frac{\Theta_0}{dz\wedge d\bar{z}}$ by $\omega_{\Sigma}$ and $\frac{\nabla ^{1,0} \phi \wedge \nabla ^{0,1} \phi^{*}}{d\tilde{z}\wedge d\overline{\tilde{z}}}$ by $\tilde{\nabla} ^{1,0} \phi \wedge \tilde{\nabla} ^{0,1} \phi^{*}$. Using \ref{smallcalculations} and \ref{methodofcontinuity} we see that 
\begin{gather}
\frac{\omega_{\Sigma}}{M_n^2} + i \frac{\partial^2 \tilde{\psi}_n}{\partial \tilde{z} \partial \overline {\tilde{z}}} 
=  \frac{1- \vert \phi \vert_n^2 }{(I_n)(II_n)}\Bigg [ i t\tilde{\nabla}_n^{1,0} \phi \wedge \tilde{\nabla}_n^{0,1} \phi^{n*} +\frac{\mu \omega_{\Sigma}u^{1-t_n}}{M_n^2}\Bigg ]. 
\label{rescaledcalc}
\end{gather}
We observe that since $ \Vert \phi \Vert_n^2 \leq 1$, the denominator in \ref{rescaledcalc} is bounded below.\\
\indent On a coordinate ball $B_R(0)$ in the $\tilde{z}$ coordinates we have $\vert \tilde{d} \tilde{\psi}_n \vert \leq 1$. Using \ref{rescaledcalc} we conclude that $\vert \tilde{\Delta} \tilde{\psi}_n \vert \leq C$ on $B_R(0)$. Hence, by interior $L^p$ regularity and Sobolev embedding we see that $\Vert \tilde{\psi}_n \Vert_{C^{1,\beta}(0.6B_R(0))} \leq C$. Thus by the interior Schauder estimates $\Vert \tilde{\psi}_n \Vert_{C^{2,\beta}(0.5B_R(0))} \leq C$. Suppose $\Vert \tilde{\psi}_n \Vert_{C^{2,\beta}(B_{0.5R}(0))} \leq C_R$ for some fixed $\beta>0$. For every fixed $R$, a subsequence of $\tilde{\psi}_n$ converges in $C^{2,\alpha}(B_{0.5R}(0))$ to a function $\tilde{\psi}_R$ for a fixed $\alpha <\beta$. Choosing a diagonal subsequence we may assume that for all $R$ we have a single function $\tilde{\psi}$. It is easy to see using \ref{rescaledcalc} that $i\frac{\partial^2 \tilde{\psi}}{\partial \tilde{z}\partial \overline{\tilde{z}}} \geq 0$. But a subharmonic function on $\mathbb{C}$ cannot be bounded above unless it is a constant. Hence $\tilde{\psi}$ is a constant. But this contradicts the fact that $\vert \tilde{d} \tilde{\psi} \vert(0)=1$.  Hence $\vert \nabla_0 \psi \vert \leq C$ thus implying a $C^1$ estimate. 
\end{proof}
\indent For the $C^0$ estimate we need the following form of Green's representation formula. Let $G$ be a Green's function of the fixed background metric $\omega_{\Sigma}$ such that $-C[1+\vert\ln(d_{\omega_{\Sigma}}(P,Q))\vert]\leq G(P,Q) \leq 0$. Then any continuous function $w$ satisfies the following equation.
\begin{gather}
w(Q)=\displaystyle \frac{\int w\omega_{\Sigma} }{\int \omega_{\Sigma}} + \int_{\Sigma} G(P,Q) i\pbp w(Q). 
\label{Green}
\end{gather}
 Next we prove a lower bound on $\psi$ :
\begin{lemma}
The function $\psi$ satisfies $\psi \geq -C$, where $C$ is independent of $t$. \label{lowerbound}
\end{lemma}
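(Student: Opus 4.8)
The plan is to split the estimate into the region away from the zero of $\phi$, where it is essentially free, and a neighbourhood of that zero, where I would argue by a blow-up/rescaling contradiction producing a Liouville bubble whose curvature mass is incompatible with the hypothesis $\deg L=1$.

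First I would record the consequence of Lemma~\ref{usefulobs}. Since $h_t=h_0e^{-\psi}$ we have $|\phi|_{h_t}^2=e^{-\psi}|\phi|_{h_0}^2$, so $|\phi|_{h_t}^2\le 1$ gives the pointwise bound $\psi\ge\ln|\phi|_{h_0}^2$. Because $\phi$ is a nonzero holomorphic section of the degree-one bundle $L$, $|\phi|_{h_0}^2$ is a smooth function vanishing (to second order) at a single point $p_0$; hence $\psi\ge\ln|\phi|_{h_0}^2\ge -C$ on the complement of any fixed ball $B=B_\rho(p_0)$, with $C$ independent of $t$. It therefore suffices to bound $\psi$ from below on $B$.

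On $B$ I would suppose, for contradiction, that $m_n:=\min_X\psi_{t_n}\to-\infty$ along some $t_n\to t$, the minimum being attained at points $q_n\to p_0$ (they must approach $p_0$ by the previous step). Near $p_0$ equation~\ref{methodofcontinuity} shows that the positive term $it\nabla^{1,0}\phi\wedge\nabla^{0,1}\phi^{\dagger}$ in the curvature grows like $e^{-\psi}$, since $\nabla^{1,0}_{h_0}\phi(p_0)=\partial\phi(p_0)\neq 0$ at the simple zero; writing $i\Theta_{h_t}=\omega_{\Sigma}+i\pbp\psi$ this means that, where $\psi$ is very negative, $\psi$ obeys a Liouville-type relation $i\pbp\psi\sim c\,e^{-\psi}\omega_{\Sigma}$ with $c>0$. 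Rescaling coordinates about $q_n$ at the scale $e^{m_n/2}$ (so that $e^{-\psi}$ becomes $O(1)$) and setting $\tilde\psi_n=\psi-m_n\ge 0$, the normalised constraint $|\phi|_{h_t}^2\le 1$ together with interior elliptic estimates exactly as in Lemma~\ref{desiredconeestimate} furnish $C^{2,\beta}_{loc}$ bounds, and a diagonal subsequence converges to a nonconstant entire solution $\tilde\psi$ on $\mathbb{C}$ of $\Delta\tilde\psi=c\,e^{-\tilde\psi}$ with $\tilde\psi\ge 0=\tilde\psi(0)$. By the classification of finite-mass Liouville solutions this limit is a standard bubble, and $\int_{\mathbb{C}} i\pbp\tilde\psi$ is a fixed positive constant ($4\pi$ for the model profile $\tilde\psi=2\log(1+\lambda^2|\tilde z|^2)$). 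Since $i\pbp\tilde\psi$ is the limit of the nonnegative curvature $i\Theta_{h_{t_n}}$ in the rescaled picture, a definite amount of curvature exceeding $2\pi$ concentrates at $p_0$; but the total curvature is $\int_X i\Theta_{h_t}=2\pi\deg L=2\pi$ and $i\Theta_{h_t}\ge 0$ everywhere (Lemma~\ref{usefulobs}), so no more than $2\pi$ can concentrate. This contradiction bounds $m_n$ below and yields $\psi\ge -C$ on $B$, hence on all of $X$.

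The hard part is precisely the behaviour at the zero $p_0$: the soft information available — positivity $i\Theta_{h_t}\ge0$, the bound $|\phi|_{h_t}^2\le1$, and finiteness of total curvature — does \emph{not} by itself prevent a logarithmic spike of $\psi$ at $p_0$ (indeed the Green representation \ref{Green} combined with $i\Theta_{h_t}\ge0$ only gives the \emph{upper} bound on $\psi$). What forces the estimate is the quantisation of the bubble mass against the hypothesis $\deg L=1$. The technical heart is putting the rescaled equation into genuinely Liouville form; in particular one must control the correction in $\nabla^{1,0}_{h_t}\phi=\nabla^{1,0}_{h_0}\phi-\phi\,\partial\psi$, whose second term is $O(1)$ after rescaling. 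I expect the cleanest route is to run the blow-up in the variable $g=|\phi|_{h_t}^2$, for which the Weitzenböck identity~\ref{verycrucialWB} eliminates the covariant derivative and the equation reads $-i\pbp\log g=(1-g)\big(\mu u^{1-t}\omega_{\Sigma}+t\,i\pbp g\big)/D(g)$ with $D(g)>0$ bounded, making the limiting Liouville structure and its mass manifest.
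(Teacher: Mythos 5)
Your opening step (the pointwise bound $\psi \ge \ln\vert\phi\vert_{h_0}^2$ from Lemma \ref{usefulobs}, which settles the estimate away from the zero $p_0$ of $\phi$) is correct, and it is also the first line of the paper's proof. The trouble is in the blow-up argument near $p_0$, where there is a genuine gap: the rescaled limit is \emph{not} the Liouville equation, so the mass-quantisation contradiction you invoke is unsupported. With $\lambda_n = e^{m_n/2}$ and $\tilde\psi_n = \psi - m_n$, the scale-invariant quantity is $\tilde g = \vert\phi\vert_{h_t}^2 \approx c\, e^{-\tilde\psi_n}\vert\tilde z\vert^2$, and the local limit of \ref{methodofcontinuity} is
\begin{equation*}
\tilde\psi_{\tilde z\bar{\tilde z}} \;=\; \frac{t\,(1-\tilde g)}{D(\tilde g)}\, c\, e^{-\tilde\psi}\,\bigl\vert 1 - \tilde z\,\tilde\psi_{\tilde z}\bigr\vert^2, \qquad \tilde g = c\, e^{-\tilde\psi}\vert\tilde z\vert^2,
\end{equation*}
where $D$ is the bounded positive denominator. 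The factor $\vert 1-\tilde z\tilde\psi_{\tilde z}\vert^2$, which you yourself flag as $O(1)$ after rescaling, does not disappear and is not a perturbation; it changes the equation from Liouville type to vortex type. Your proposed remedy (pass to $g$ via the Weitzenb\"ock identity \ref{verycrucialWB}) does not yield a Liouville equation either: away from $\tilde z=0$, setting $v=-\log\tilde g\ge 0$, the limit reads $\Delta v = f(v)\vert\nabla v\vert^2$ with $f\ge0$ bounded and integrable. This quasilinear gradient equation has no Chen--Li classification and no quantised mass, so the key assertion "a definite amount of curvature exceeding $2\pi$ concentrates" is unjustified. (A contradiction can still be extracted, but by a different mechanism: the substitution $w=F(v)$ with $F''=-fF'$ makes $w$ harmonic on $\mathbb{C}\setminus\{0\}$, bounded below because $\tilde g\le1$, and blowing up like $-a\log\vert\tilde z\vert$, $a>0$, at the origin; the removable-singularity classification plus Liouville's theorem for harmonic functions rule out such a $w$. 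That argument — not bubble-mass counting — is what you would actually have to supply.)

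A second, structural gap is that your compactness step is circular: you cite Lemma \ref{desiredconeestimate} for local $C^{2,\beta}$ bounds on the rescaled sequence, but that lemma \emph{assumes} the uniform $C^0$ bound, which is precisely what the present lemma is supposed to prove; moreover the right-hand side of the rescaled equation contains $\vert\nabla\tilde\psi_n\vert^2$ (through $\tilde z(\tilde\psi_n)_{\tilde z}$), so linear elliptic estimates give nothing until an independent local gradient bound is established. For comparison, the paper's proof avoids blow-up entirely and is much shorter: from the equation, \ref{verycrucialWB} and $\vert\phi\vert_h^2\le1$ it derives the pointwise bound $\bigl(2r_2(2r_2+1)-1\bigr)\, i\Theta_h \le \mu u^{1-t}\omega_\Sigma + i\pbp\vert\phi\vert_h^2$, inserts this into the Green representation \ref{Green} — the average of $\psi$ being bounded below by $\int\ln\vert\phi\vert_{h_0}^2\,\omega_\Sigma$, which is finite since the logarithmic singularity is integrable — and handles the $i\pbp\vert\phi\vert_h^2$ term by applying \ref{Green} once more to the bounded function $\vert\phi\vert_h^2$. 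You should either adopt that route or supply the two missing ingredients above.
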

\begin{proof}
Firstly, note that since $\vert \phi \vert_{0} ^2 e^{-\psi} \leq 1$, 
\begin{gather}
\displaystyle \int \psi \omega_{\Sigma} \geq \int \ln (\vert \phi \vert_{h_0}^2) \omega_{\Sigma}.
\end{gather}
Using Green's formula \ref{Green} we see that
\begin{align}
\psi (P) &\geq   \displaystyle \int \ln (\vert \phi \vert_{h_0}^2) \omega_{\Sigma} + \displaystyle \int G(P,Q)i\pbp \psi(Q) \nonumber \\
&= \int \ln (\vert \phi \vert_{h_0}^2) \omega_{\Sigma} + \displaystyle \int G(P,Q)i\Theta_h(Q) - \int G(P,Q) \omega_{\Sigma}(Q).
\label{beforethetaingreen}
\end{align}
At this point we note that
\begin{align}
i\Theta_h &\leq \frac{\mu u^{1-t}\omega_{\Sigma}+i\pbp \vert \phi \vert_h^2+i\Theta_h \vert \phi \vert_h^2}{2r_2(2r_2+1)} \nonumber \\
\Rightarrow i\Theta_h (2r_2(2r_2+1)-1)&\leq i\Theta_h  (2r_2(2r_2+1)-\vert \phi \vert_h^2)\nonumber \\
&\leq \mu u^{1-t}+i\pbp \vert \phi \vert_h^2 \nonumber \\
\Rightarrow G(P,Q) i\Theta_h (Q) &\geq G(P,Q)\frac{ \mu u^{1-t}(Q) \omega_{\Sigma}(Q)+i\pbp \vert \phi \vert_h^2(Q)}{2r_2(2r_2+1)-1}. 
\label{thetahbounds}
\end{align}
Using \ref{thetahbounds} in \ref{beforethetaingreen} we get
\begin{align}
\psi(P) &\geq \int \ln (\vert \phi \vert_{h_0}^2) \omega_{\Sigma} + \displaystyle \int  G(P,Q)\frac{ \mu u^{1-t}(Q)\omega_{\Sigma}(Q)+i\pbp \vert \phi \vert_h^2(Q)}{2r_2(2r_2+1)-1}- \int G(P,Q) \omega_{\Sigma}(Q) \nonumber \\
&= \ln (\vert \phi \vert_{h_0}^2) \omega_{\Sigma} + \displaystyle \int  G(P,Q)\frac{ \mu u^{1-t}(Q)\omega_{\Sigma}(Q)}{2r_2(2r_2+1)-1}- \int G(P,Q) \omega_{\Sigma}(Q) + \frac{\vert \phi \vert_h^2 (P)-\displaystyle \int \vert \phi \vert_h^2 \omega_{\Sigma}}{2r_2(2r_2+1)-1} \nonumber \\
&\geq \ln (\vert \phi \vert_{h_0}^2) \omega_{\Sigma} + \displaystyle \int  G(P,Q)\frac{ \mu u^{1-t}(Q)}{2r_2(2r_2+1)-1}\omega_{\Sigma}(Q)- \int G(P,Q) \omega_{\Sigma}(Q) - \frac{1}{2r_2(2r_2+1)-1} \nonumber \\
&\geq -C, \nonumber 
\end{align}
where we used the Green representation formula again in the third-to-last inequality.
\end{proof}
\indent Finally we prove an upper bound on $\psi$ thus proving the closedness of $T$.
\begin{lemma}
The function $\psi$ satisfies $\psi \leq C,$ where $C$ is independent of $t$.
\label{upperbound}
\end{lemma}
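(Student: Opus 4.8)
The plan is to run the maximum principle directly on $\psi$ and to reduce the inequality it produces to a purely algebraic statement in the single scalar $g(p):=|\phi|_h^2(p)$, whose only feasible solutions force $\psi(p)$ to be bounded above. Concretely, let $p\in X$ be a point where $\psi$ attains its maximum. Since $X$ is a Riemann surface, $d\psi(p)=0$ and $i\pbp\psi(p)\le 0$. The first fact gives $\nabla_h\phi(p)=\nabla_0\phi(p)$, so that $|\phi|_h^2(p)=e^{-\psi(p)}|\phi|_0^2(p)$ and $i\nabla_h^{1,0}\phi\wedge\nabla_h^{0,1}\phi^{\dag_h}(p)=e^{-\psi(p)}\,i\nabla_0^{1,0}\phi\wedge\nabla_0^{0,1}\phi^{\dag_0}(p)\ge 0$; the second gives $i\Theta_h(p)=\omega_\Sigma+i\pbp\psi(p)\le\omega_\Sigma$. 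Dividing \ref{methodofcontinuity} by $\omega_\Sigma$ at $p$, discarding the nonnegative gradient term, and using $|\phi|_h^2\le 1$ from Lemma \ref{usefulobs} (so $1-g(p)\ge 0$), I obtain, with $m_0:=2r_2(2r_2+2)$,
\[(1-g(p))\,\mu u^{1-t}(p)\;\le\;(2r_2+tg(p))(2+2r_2-tg(p))\;=\;m_0+tg(p)(2-tg(p))\;\le\;m_0+2g(p),\]
which rearranges to $g(p)\ge \dfrac{\mu u^{1-t}(p)-m_0}{\mu u^{1-t}(p)+2}$.

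Next I would exploit the definitions $\mu=\alpha m_0$ and $u=\tfrac{1}{\alpha(1-|\phi|_0^2)}$ to rewrite $\mu u^{1-t}(p)=m_0\,\alpha^{t}(1-a)^{-(1-t)}$, where $a:=|\phi|_0^2(p)\in[0,\tfrac12)$. Since $\alpha>1$ and $a<1$, this is $\ge m_0$, so the lower bound on $g(p)$ is meaningful. This also disposes of the zeros of $\phi$: if $\phi(p)=0$ then $a=g(p)=0$, and the displayed inequality forces $\mu u^{1-t}(p)\le m_0$, i.e.\ $\alpha^t\le 1$, which is impossible for $t\in(0,1]$ (for $t=0$ one has $\psi\equiv 0$). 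Hence the maximum of $\psi$ is attained at a point with $a>0$, and writing $g(p)=e^{-\psi(p)}a$ and taking logarithms yields
\[\psi(p)\;\le\;\ln\!\Big(a\,\frac{\mu u^{1-t}(p)+2}{\mu u^{1-t}(p)-m_0}\Big).\]
Because $\mu u^{1-t}(p)\le 2\alpha m_0$ is bounded above, everything reduces to bounding $\dfrac{a}{\mu u^{1-t}(p)-m_0}=\dfrac{a}{m_0\big(\alpha^{t}(1-a)^{-(1-t)}-1\big)}$ uniformly in $t\in[0,1]$ and in $p\in X$.

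This last uniform bound is the delicate step, and it is exactly where the strict inequality $\alpha>1$ is indispensable; I would establish it by splitting the range of $t$. For $t\ge\tfrac12$ the factor $\alpha^{t}\ge\alpha^{1/2}>1$ keeps the denominator $\alpha^{t}(1-a)^{-(1-t)}-1\ge\alpha^{1/2}-1>0$ bounded away from zero — this is precisely the regime approaching $t=1$, where the path reaches the genuine equation and where $\alpha=1$ would make this denominator vanish and the estimate collapse. For $t\le\tfrac12$ I would instead discard $\alpha^t\ge 1$ and estimate $\alpha^{t}(1-a)^{-(1-t)}-1\ge(1-a)^{-(1-t)}-1\ge -(1-t)\ln(1-a)\ge(1-t)a\ge\tfrac12 a$ by the elementary inequality $-\ln(1-a)\ge a$, so that $\frac{a}{\mu u^{1-t}(p)-m_0}\le \frac{2}{m_0}$. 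In either regime the right-hand side of the displayed bound on $\psi(p)$ is controlled by a constant depending only on $r_1,r_2$ and $\sup_X|\phi|_0^2$ and not on $t$. Since $\psi(p)=\max_X\psi$, this gives $\psi\le C$ with $C$ independent of $t$, which is the assertion. I expect the main obstacle to be not the maximum principle itself but precisely this verification that the constant remains finite along the entire continuity path: the denominator $\mu u^{1-t}-m_0$ degenerates at the $t=1$ endpoint unless $\alpha>1$, and it is the interplay of $\alpha^t$ (dominant near $t=1$) with the factor $(1-a)^{-(1-t)}$ (dominant near $t=0$) that rescues uniformity.
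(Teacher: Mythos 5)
Your proof is correct, and while it opens exactly as the paper does --- maximum principle at a maximum point $p$ of $\psi$, discarding the nonnegative gradient term and using $\vert \phi \vert_h^2 \leq 1$ to arrive at the paper's inequality \ref{atp} --- the way you close the argument is genuinely different. The paper argues by contradiction with a sequence: if $\psi_n(p_n) \to \infty$ then $\vert \phi \vert_{h_n}^2(p_n) \to 0$, and passing to the limit in \ref{atp} gives $1 \geq \alpha^{t} \geq \alpha^{\delta/2} > 1$; this crucially uses that closedness only needs to be checked for $t \geq \delta$, where $[0,\delta) \subset T$ comes from openness at $t=0$, so the constant implicitly depends on $\delta$. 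You instead extract a direct quantitative bound: $g(p) \geq \frac{\mu u^{1-t}-m_0}{\mu u^{1-t}+2}$, hence $e^{\psi(p)} \leq a\,\frac{\mu u^{1-t}+2}{\mu u^{1-t}-m_0}$, and the splitting $t \gtrless \tfrac12$ --- with $\alpha^{1/2}-1>0$ dominating for large $t$, and the elementary inequality $(1-a)^{-(1-t)}-1 \geq (1-t)a$ cancelling the factor $a$ for small $t$ --- yields a bound uniform over all of $(0,1]$. This buys something real: your estimate does not invoke the openness-near-zero input at all, and it is explicit rather than obtained by compactness, so it would survive in settings where the contradiction argument's limit point $q$ is awkward to control.

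One soft spot, which does not affect the lemma: your parenthetical claim that $t=0$ forces $\psi \equiv 0$ is not immediate, because the maximum of $\psi$ could sit at a zero of $\phi$, where the $t=0$ equation gives no pointwise information; one needs a small additional argument (e.g.\ on the open set $\{\psi>0\}$ the function $\psi$ is subharmonic and vanishes on the boundary, forcing $\{\psi>0\}=\emptyset$, and symmetrically for $\{\psi<0\}$). But this case is irrelevant to the purpose of Lemma \ref{upperbound}: the estimate is only needed along sequences $t_n \to t$ with $t_n \in T$, and if $t=0$ then $0 \in T$ trivially, so no a priori bound at $t=0$ is ever required. You could simply delete the parenthetical and state the bound for $t \in (0,1]$.
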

\begin{proof}
Suppose the maximum of $\psi$ is attained at a point $p$. At this point, we see that $i\pbp \psi (p) \leq 0$ and $\partial \psi(p)=\bar{\partial} \psi(p)=0$. Thus
\begin{gather}
i\Theta_0 (p)=\omega_{\Sigma}(p)\geq (1-\vert \phi \vert_{h}^2 (p))\frac{\mu u(p)^{1-t}\omega_{\Sigma}(p)}{(I)(p)(II)(p)}. \label{atp}
\end{gather}
If the upper bound does not hold and a sequence of $\psi_n (p_n) \rightarrow \infty$ (with $p_n \rightarrow q$), then $\vert \phi \vert_n ^2 (q) \rightarrow 0$. Hence,
\begin{gather}
1\geq \frac{\mu u^{1-t}(q)}{2r_2(2r_2+2)} = \frac{\alpha^{t}}{ (1-\vert \phi \vert_{h_0}^2(q))^{1-t}} \geq \alpha^t >\alpha^{\delta/2}>1.
\end{gather}
 We have a contradiction. Hence $\psi \leq C$.
\end{proof}
Therefore $T$ is open, closed, and non-empty. Hence $T=[0,1]$ as desired. \\

\textbf{Postscript} : A slightly unsatisfactory aspect of the above proof is that the method of continuity used above is tricky to generalise to an arbitrary vector bundle. The usual method of continuity used for the complex Monge-Amp\`ere equation is $$\omega_{\phi_t}^n = e^{tf} \omega^n.$$
A naive generalisation of this to the Monge-Amp\`ere Vortex equation \ref{MAVortexeq} seems to cause problems with regard to the upper bound on $\psi$, i.e., Lemma \ref{upperbound}. That being said, it is quite possible that an involved Moser iteration argument (akin to the one used for Calabi Conjecture) might circumvent this problem. 

\subsection{$r_1>r_2$ assuming existence}
\indent  We now show that if a solution to equation \ref{MAVortexeq} exists, then $r_1>r_2$. Indeed, if a solution exists then it also solves the vbMA equation by Theorem \ref{dimensionalreductionthm}. Theorem \ref{stabilityandChernclassinequality} shows that the bundle is MA-stable. This means that 
$$\mu_{MA} (\pi_1^{*}(r_1+1)L\otimes \pi_2^{*} r_2 \mathcal{O}(2)) < \mu_{MA} (V).$$
We now calculate the second Chern characters of the bundles involved.
\begin{gather}
\mathrm{ch}_2 (\pi_1^{*}(r_1+1)L\otimes \pi_2^{*} r_2 \mathcal{O}(2)) = \frac{\mathrm{ch}_1 (\pi_1^{*}(r_1+1)L)) \mathrm{ch}_1(\pi_2^{*} r_2 \mathcal{O}(2))}{2} \nonumber \\
= (r_1+1)r_2 c_1(L) [\mathbb{P}^1] \nonumber \\
\mathrm{ch}_2 (V) =  \mathrm{ch}_2 (\pi_1^{*}(r_1+1)L\otimes \pi_2^{*} r_2 \mathcal{O}(2)) + \mathrm{ch}_2 (\pi_1^{*}r_1L\otimes \pi_2^{*} (r_2+1) \mathcal{O}(2))  \nonumber \\
= ((r_1+1)r_2  + r_1(r_2+1))c_1(L) [\mathbb{P}^1].
\end{gather}
Therefore the stability condition translates into the following inequality.
\begin{gather}
(r_1+1)r_2 < \frac{(r_1+1)r_2  + r_1(r_2+1)}{2} \nonumber \\
\Rightarrow r_2<r_1.
\end{gather}
\begin{remark}
Interestingly enough, it can be shown that $V$ is Mumford stable with respect to $c_1(V)$. Indeed, by a result of Garc\'ia-Prada \cite{Oscar} it suffices to check this for the subbundle $S=\pi_1^{*}(r_1+1)L\otimes \pi_2^{*} r_2 \mathcal{O}(2)$.
\begin{gather}
c_1(\pi_1^{*}(r_1+1)L\otimes \pi_2^{*} r_2 \mathcal{O}(2)).c_1(V) = ((r_1+1)c_1(L)+2r_2 [\mathbb{P}^1]).((2r_1+1)c_1(L)+2(2r_2+1) [\mathbb{P}^1]) \nonumber \\
= (2(r_1+1)(2r_2+1)+2r_2(2r_1+1))c_1 (L). [\mathbb{P}^1]\nonumber \\
c_1(V).c_1(V) = 4(2r_1+1)(2r_2+1) \nonumber \\
\frac{deg(S)}{1}-\frac{deg(V)}{2}=2(r_1+1)(2r_2+1)+2r_2(2r_1+1)-2(2r_1+1)(2r_2+1) \nonumber \\
= -2r_1+2r_2<0.
\end{gather}
As remarked in the introduction, this explains why a KLBMY-type inequality holds in this case.
\end{remark}

\subsection{Uniqueness}
\indent  To complete the proof of Theorem \ref{KobHitcorrespondence} we need to prove uniqueness. Our strategy to do so is as follows :
\begin{enumerate}
\item Let $\mathfrak{h}_1$ denote the $t=1$ solution arising from the continuity path \ref{methodofcontinuity}, i.e.,
\begin{gather}
i\Theta_{h_t} = i(\Theta_{h_0}+\pbp \psi_t)= (1- \vert \phi \vert_{h_t}^2)\frac{\mu u^{1-t} \omega_{\Sigma}+it\nabla_t ^{1,0} \phi \wedge \nabla_t^{0,1} \phi^{\dag_{h_t}}}{(2r_2+t \vert \phi \vert_{h_t}^2)(2+2r_2-t \vert \phi \vert_{h_t}^2)}.
\label{methodofcontinuityagain}
\end{gather}
\indent Let $h_2$ denote any other solution of the Monge-Amp\`ere Vortex equation satsifying $\vert \phi \vert_{h_2}^2 \leq 1$. We wish to run another continuity path backwards starting with $\tilde{h}_{t=1} = h_2 : $ 
\begin{gather}
i\Theta_{\tilde{h}_t} = i(\Theta_{h_0}+\pbp \tilde{\psi}_t)= (1- \vert \phi \vert_{\tilde{h}_t}^2)\frac{\mu u^{1-t} \omega_{\Sigma}+it\nabla_{\tilde{h}_t} ^{1,0} \phi \wedge \nabla ^{0,1} \phi^{\dag_{\tilde{h}_t}}}{(2r_2+t \vert \phi \vert_{\tilde{h}_t}^2)(2+2r_2-t \vert \phi \vert_{\tilde{h}_t}^2)}.
\label{backwardspath}
\end{gather}
Denote by $\tilde{T}\subset [0,1]$ the set of $t$ such that \ref{backwardspath} has a smooth solution. This set is non-empty because $1 \in \tilde{T}$.
\item The proof of openness for \ref{methodofcontinuityagain} shows that the set $\tilde{T}\subset [0,1]$ is open. The \emph{a priori} estimates for \ref{methodofcontinuityagain} show that $(0,1] \subset \tilde{T}$. The only potential problem can occur at $t=0$ because Lemma \ref{upperbound} is no longer valid. 
\item We prove that there exists a ``small" $t_0 \in [0,1]$ such that \ref{methodofcontinuityagain} has a unique solution at $t=t_0$. That is, there exists a unique smooth $h$ satisfying $\vert \phi \vert_h^2 \leq 1$ and
\begin{gather}
i\Theta_{h} = i(\Theta_{h_0}+\pbp \psi)= (1- \vert \phi \vert_{h}^2)\frac{\mu u^{1-t_0} \omega_{\Sigma}+it_0\nabla_h ^{1,0} \phi \wedge \nabla^{0,1} \phi^{\dag_{h}}}{(2r_2+t_0 \vert \phi \vert_{h}^2)(2+2r_2-t_0 \vert \phi \vert_{h}^2)}.
\label{uniquenessforsmallt0strategyeq}
\end{gather}
\item The previous point implies that the two continuity paths \ref{methodofcontinuityagain} and \ref{backwardspath} intersect at $t=t_0$. Using this observation we prove that $h_1=h_2$.
\end{enumerate}
\indent Since steps 1 and 2 are already done, we proceed to step 3 :
\begin{lemma}
There exists a number $t_0 \in (0,1]$ depending only on $r_2, \mu, h_0$ such that there is a unique smooth metric $h$ satisfying $\vert \phi \vert_h^2 \leq 1$ and the following equation.
\begin{gather}
i\Theta_{h} = i(\Theta_{h_0}+\pbp \psi)= (1- \vert \phi \vert_{h}^2)\frac{\mu u^{1-t_0} \omega_{\Sigma}+it_0\nabla_h ^{1,0} \phi \wedge \nabla^{0,1} \phi^{\dag_{h}}}{(2r_2+t_0 \vert \phi \vert_{h}^2)(2+2r_2-t_0 \vert \phi \vert_{h}^2)}.
\label{uniqenessforsmallt0eq}
\end{gather}
\label{uniquenessforsmallt0lem}
\end{lemma}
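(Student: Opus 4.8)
The plan is to read off existence from the openness already established, and to prove uniqueness by a maximum principle applied to the difference of two solutions, for $t_0$ small. Since Lemma \ref{openness} gives $[0,\delta)\subset T$, for any $t_0\in(0,\delta)$ equation \ref{uniqenessforsmallt0eq} has at least one smooth solution with $\vert\phi\vert_h^2\le1$ (Lemma \ref{usefulobs}), so the entire issue is uniqueness for $t_0$ small.

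Suppose $h^{(1)}=h_0e^{-\psi_1}$ and $h^{(2)}=h_0e^{-\psi_2}$ both solve \ref{uniqenessforsmallt0eq} with $\vert\phi\vert_{h^{(j)}}^2\le1$, and set $w=\psi_1-\psi_2$. Dividing \ref{uniqenessforsmallt0eq} by $\omega_{\Sigma}$ turns it into a scalar equation $1+\mathcal{L}\psi_j=F(x,\psi_j,\partial\psi_j;t_0)$, where $\mathcal{L}\psi=\frac{i\pbp\psi}{\omega_{\Sigma}}$ is a positive multiple of the Laplacian, $g_j=\vert\phi\vert_{h^{(j)}}^2=\vert\phi\vert_0^2e^{-\psi_j}$, and the only dependence of $F$ on $\partial\psi_j$ is through the first-order term $it_0\,\nabla_{h^{(j)}}^{1,0}\phi\wedge\nabla_{h^{(j)}}^{0,1}\phi^{\dag}$, which is multiplied by $t_0$. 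Subtracting the two scalar equations and invoking the mean value theorem gives a homogeneous linear elliptic equation $\mathcal{L}w=cw+b\cdot\partial w+\bar b\cdot\bar\partial w$, with $c=\partial_\psi F$ and $b,\bar b=O(t_0)$. I would evaluate this at a maximum point $p$ of $w$: there $\partial w(p)=\bar\partial w(p)=0$, which kills the first-order terms and, crucially, forces $\nabla_{h^{(1)}}^{1,0}\phi(p)=\nabla_{h^{(2)}}^{1,0}\phi(p)$, so that at $p$ both $g$ and the gradient term of the first solution equal $e^{-w(p)}$ times those of the second.

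At $t_0=0$ the equation collapses to $i\Theta_h=\tfrac{1-\vert\phi\vert_h^2}{1-\vert\phi\vert_0^2}\omega_{\Sigma}$ (using $\alpha=\tfrac{\mu}{2r_2(2r_2+2)}$ and $u=\tfrac{1}{\alpha(1-\vert\phi\vert_0^2)}$); here $c=\tfrac{\vert\phi\vert_h^2}{1-\vert\phi\vert_0^2}\ge0$ is not identically zero, and since a linear elliptic operator on a closed surface with nonnegative, non-identically-vanishing zeroth order coefficient is injective, the maximum principle gives $w\equiv0$ immediately. The content of the lemma is to push this through for small $t_0>0$, and the main obstacle is precisely that $c$ degenerates at the single (simple) zero of $\phi$, where $\nabla_h^{1,0}\phi\neq0$ and the $O(t_0)$ gradient correction is therefore not dominated by the leading term. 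To overcome this I would, in the spirit of the auxiliary function $\tilde v=vI$ from the proof of Lemma \ref{openness}, apply the maximum principle to the rescaled function $w\,(2r_2+t_0 g)$ and use the identity $\nabla_{h^{(1)}}^{1,0}\phi(p)=\nabla_{h^{(2)}}^{1,0}\phi(p)$ to rewrite $F(\cdot,\psi_1,\partial\psi_1)-F(\cdot,\psi_2,\partial\psi_2)$ at the extremum as $(1-e^{-w(p)})$ times a coefficient that is strictly positive once $t_0$ falls below an explicit threshold depending only on $r_2,\mu$ and $h_0$. This forces $w_{\max}\le0$, and the symmetric argument at the minimum gives $w_{\min}\ge0$, whence $w\equiv0$ and $h^{(1)}=h^{(2)}$.
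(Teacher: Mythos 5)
Your existence step (reading it off from Lemma \ref{openness} together with Lemma \ref{usefulobs}) is fine, your reduction of uniqueness to a maximum-principle argument with a multiplier is the same family of argument as the paper's, and you correctly identify the crux: the zeroth-order coefficient degenerates at the zero of $\phi$, where only the $O(t_0)$ gradient term survives. But your sketch has two genuine gaps. The first is an internal inconsistency. Writing $g_j=\vert\phi\vert^2_{h^{(j)}}$ and $G_j=i\nabla^{1,0}_{h^{(j)}}\phi\wedge\nabla^{0,1}\phi^{\dag_{h^{(j)}}}$, the identities $\nabla^{1,0}_{h^{(1)}}\phi(p)=\nabla^{1,0}_{h^{(2)}}\phi(p)$, $g_1(p)=e^{-w(p)}g_2(p)$, $G_1(p)=e^{-w(p)}G_2(p)$ follow from $\nabla^{1,0}_{h^{(1)}}\phi-\nabla^{1,0}_{h^{(2)}}\phi=-(\partial w)\phi$, hence hold only at critical points of $w$ itself. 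At a maximum of your weighted function $w\,(2r_2+t_0g)$ one has instead $\partial w(p)=-t_0w(p)\partial g(p)/(2r_2+t_0g(p))\neq 0$, so the identity you invoke fails there and must be replaced by a quantitative version with errors proportional to $t_0\,w(p)\,\partial g(p)$, which then contaminate every gradient term; the corresponding bookkeeping (the analogues of \ref{firstderitildefuni}, \ref{nablarelations1}, \ref{nablarelations2}, \ref{relationhhs} in the paper) is a substantial part of the proof, not a detail one can wave at.

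The second gap is the serious one: the positive coefficient you want in front of $(1-e^{-w(p)})$, with a threshold for $t_0$ depending only on $r_2,\mu,h_0$, is not uniform in $\Vert w\Vert_{C^0}$, and no two-sided a priori bound on $w$ is available --- $h^{(2)}$ is an arbitrary solution, the Green's-function argument of Lemma \ref{lowerbound} gives only a one-sided bound, and the argument of Lemma \ref{upperbound} degenerates as $t_0\to 0$. Concretely, with your multiplier $2r_2+t_0g$ the Weitzenb\"ock gain is $t_0\,w\,G$, of the \emph{same} order in $t_0$ as the bad gradient term $\sim t_0(1-e^{-w})G_2$, so smallness of $t_0$ separates nothing and one must compare raw constants: if $g,G$ are taken with respect to $h^{(1)}$, the gain is $t_0we^{-w}G_2$, exponentially smaller than the bad term once $w(p)$ is large; if they are taken with respect to $h^{(2)}$, the gain suffices, but the multiplier also produces the term $-t_0w\,i\Theta_{h^{(2)}}g_2$, which grows linearly in $w$ while the good term $(1-e^{-w})\,c_1\,g_2\,\mu u^{1-t_0}\omega_{\Sigma}$ saturates, so the required inequality fails for large $w(p)$ at any fixed $t_0$. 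The paper's proof is engineered around exactly this difficulty: it first records the one-sided bound $f\geq -C_1$ (\ref{loweronf}), interpolates $\h_s=\h_1e^{-sf}$ and works with the averaged metric $\h=\int_0^1\h_s\,ds$ whose curvature is controlled by Cauchy--Schwarz (\ref{curvatureoffrakhfinal}), and takes the multiplier $\beta+\vert\phi\vert^2_{\h}$ with $\beta$ \emph{large}, so that the Weitzenb\"ock gain has an $O(1)$ coefficient while every bad term is $O(t_0)+O(1/\beta)$ with constants depending only on $C_1$; only then is $t_0$ chosen small. Without some replacement for this mechanism --- for instance a quantitative upper bound on arbitrary solutions at fixed $t_0$ (which, from the argument of Lemma \ref{upperbound}, blows up as $t_0\to 0$) together with a complete reworking of the constants --- your argument does not close.
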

\begin{proof}
As we go along the proof, we will choose $t_0$ to be a small enough number depending only on $r_2, \mu, h_0$.\\
Let $\hon$ be the solution $h_{t_0}$ coming from the continuity path \ref{methodofcontinuityagain}. Denote by $\htw$ any other smooth solution of \ref{uniqenessforsmallt0eq} satisfying $\vert \phi \vert_{\htw}^2 \leq 1$. Define a function $f$ to satisfy $\htw = \hon e^{-f}$. Without loss of generality, there exists a point $q$ such that
\begin{gather}
f(q)>0.
\label{assumption:onf}
\end{gather}
 Let $\h_s = \h_1 e^{-sf}=\h_2^s \h_1^{1-s}$ where $0\leq s\leq 1$. It is easy to see that 
\begin{gather}
\vert \phi \vert_{\h_s}^2 =(\vert \phi \vert_{\h_1})^{2(1-s)} \vert\phi \vert_{\h_2} ^{2s} \leq 1.
\label{boundonphi}
\end{gather}
Let $I_s = 2r_2+t_0\vert \phi \vert_{\h_s}^2$, $II_s = 2r_2+2-t_0\vert \phi \vert_{\h_s}^2$, and $J_s = \frac{\mu u^{1-t_0}\omega_{\Sigma}+it_0 \nabla_{\h_s} ^{1,0} \phi \wedge \nabla^{0,1} \phi^{\dag_{\h_s}}}{(I)_s (II)_s}$. Therefore $i\Theta_{\h_s} = (1-\vert \phi \vert_s^2)J_s$. \\
\indent By assumption, $i\Theta_{\h_1} = (1-\vert \phi \vert_{\h_1})^2 J_1$ and $i\Theta_{\h_2} = (1-\vert \phi \vert_{\h_2})^2 J_2$. Upon subtraction we get the following equation.
\begin{align}
i\partial \bar{\partial}f &= \displaystyle \int_0 ^1 ds \frac{d((1-\vert \phi \vert_{\h_s}^2)J_s)}{ds} \nonumber \\
&= \displaystyle \int_0 ^1 ds \left ( f\vert\phi\vert_{\h_s}^2 J_s + (1-\vert \phi \vert_{\h_s}^2) \frac{dJ_s}{ds}\right ).
\label{FTC}
\end{align}
We now calculate $\frac{dJ_s}{ds}$.
\begin{align}
\frac{dJ_s}{ds} &= -J_s\left ( \frac{1}{I_s}\frac{d (I)_s}{ds} + \frac{1}{II_s}\frac{d (II)_s}{ds}\right ) + \frac{it_0 \frac{d}{ds}\nabla_{\h_s}^{1,0}\phi \wedge \nabla^{0,1} \phi^{\dag_{\h_s}}}{(I)_s(II)_s} \nonumber \\
&= \frac{2J_s t_0 f \vert \phi \vert_{\h_s}^2 (1-t_0 \vert \phi \vert_{\h_s}^2)}{(I)_s (II)_s}+\frac{it_0}{(I)_s(II)s}\frac{d}{ds} \left (\partial \bar{\partial} \vert \phi \vert_{\h_s}^2+\Theta_{\h_s} \vert \phi \vert_{\h_s}^2 \right ),
\label{derivjsfirst}
\end{align}
where we used the Weitzenb\"ock identity \ref{verycrucialWB} in the last equation of \ref{derivativesoftildev}. Continuing further,
\begin{align}
&\frac{dJ_s}{ds} =\frac{2J_s t_0 f \vert \phi \vert_{\h_s}^2 (1-t_0 \vert \phi \vert_{\h_s}^2)}{(I)_s (II)_s}+\frac{it_0}{(I)_s(II)_s} \left(\partial \bar{\partial} (-f\vert \phi \vert_{\h_s}^2)+\partial \bar{\partial} f \vert \phi \vert_{\h_s}^2 - \Theta_{\h_s} f \vert \phi \vert_{\h_s}^2 \right) \nonumber \\
&= \frac{2J_s t_0 f \vert \phi \vert_{\h_s}^2 (1-t_0 \vert \phi \vert_{\h_s}^2)}{(I)_s (II)_s}+\frac{it_0}{(I)_s(II)_s} \left(-f \nabla_{\h_s} ^{1,0} \phi \wedge \nabla^{0,1}\phi^{\dag_{\h_s}} -\partial f \wedge \bar{\partial} \vert \phi \vert_{\h_s}^2 - \partial \vert \phi \vert_{\h_s}^2 \wedge \bar{\partial} f\right).
\label{derivjssecond}
\end{align}
Substituting \ref{derivjssecond} into \ref{FTC} we get the following equation.
\begin{align}
i\partial \bar{\partial}f &=\displaystyle \int_0 ^1 ds \Bigg ( f\vert\phi\vert_{\h_s}^2 J_s + (1-\vert \phi \vert_{\h_s}^2)  \frac{2J_s t_0 f \vert \phi \vert_{\h_s}^2 (1-t_0 \vert \phi \vert_{\h_s}^2)}{(I)_s (II)_s} \nonumber \\
&+ \frac{it_0(1-\vert \phi \vert_{\h_s}^2)}{(I)_s(II)_s} \left(-f \nabla_{\h_s} ^{1,0} \phi \wedge \nabla^{0,1}\phi^{\dag_{\h_s}} -\partial f \wedge \bar{\partial} \vert \phi \vert_{\h_s}^2 - \partial \vert \phi \vert_{\h_s}^2 \wedge \bar{\partial} f\right) \Bigg ).
\label{FTCsecond}
\end{align}
Before we proceed further, we note that the proof of Lemma \ref{lowerbound} implies the following lower bound.
\begin{gather}
f \geq -C_1,
\label{loweronf}
\end{gather}
where $C_1$ depends only on $r_2,\mu, h_0$. Define
\begin{gather}
\tilde{f} = f(\beta + \vert \phi \vert_{\h}^2),
\label{deftildef}
\end{gather}
where $\beta>1$ is a large enough constant (depending only on $r_2, \mu, h_0$) to be chosen later on and $\h$ is defined as
\begin{gather}
\h = \displaystyle \int_0 ^1 \h_s ds = \h_1 \int_0 ^1 e^{-sf}ds.
\label{definitionoffrakh}
\end{gather}
From \ref{boundonphi} it easily follows that 
\begin{gather}
\vert \phi \vert_{\h}^2 \leq 1.
\label{boundonphifrakh}
\end{gather}
Moreover, the curvature of $\h$ is as follows.
\begin{align}
\Theta_{\h} &= \Theta_{\h_1} - \partial \bar{\partial} \ln \left ( \displaystyle \int_0^1 e^{-sf}ds \right ) \nonumber \\
&= \Theta_{\h_1}+\partial\frac{\bar{\partial} f \displaystyle \int_0^1 se^{-sf}ds}{\displaystyle \int_0^1 e^{-sf}ds} \nonumber \\
&= \Theta_{\h_1}+\partial \bar{\partial} f \frac{\displaystyle \int_0^1 se^{-sf}ds}{\displaystyle \int_0^1 e^{-sf}ds}+\partial f \wedge \bar{\partial} f \left (\frac{\left ( \displaystyle \int_0^1 se^{-sf}ds \right )^2}{\left (\displaystyle \int_0^1 e^{-sf}ds \right)^2}-\frac{\displaystyle \int_0^1 s^2e^{-sf}ds}{\displaystyle \int_0^1 e^{-sf}ds} \right ).
\label{curvatureoffrakhinter}
\end{align}
By the Cauchy-Schwarz inequality we see that
\begin{align}
\frac{\left ( \displaystyle \int_0^1 se^{-sf}ds \right )^2}{\left (\displaystyle \int_0^1 e^{-sf}ds \right)^2}-\frac{\displaystyle \int_0^1 s^2e^{-sf}ds}{\displaystyle \int_0^1 e^{-sf}ds}  &\leq 0.
\label{CSinterfrakh}
\end{align}
Therefore,
\begin{align}
i\Theta_h &\leq \avs i\Theta_{\h_2} + (1-\avs)i\Theta_{\h_1},
\label{curvatureoffrakhfinal}
\end{align}
where
\begin{gather}
0< \avs = \frac{\displaystyle \int_0^1 se^{-sf}ds}{\displaystyle \int_0^1 e^{-sf}ds} \leq 1.
\label{definitionofavs}
\end{gather}
\indent If the maximum of $\tilde{f}$ occurs at $p$, then $\partial \tilde{f} (p)=\bar{\partial} \tilde{f}(p)=0$. Assumption \ref{assumption:onf} implies that $\tilde{f}(p)>0, f(p)>0$. Therefore,
\begin{align}
\partial f(p) (\beta+\vert \phi \vert_{\h}^2)(p)&=-f(p)\partial \vert \phi \vert_{\h}^2(p) \nonumber \\
\Rightarrow \partial f (p) &= \frac{-f(p)\partial (\vert \phi \vert_{\h}^2)(p)}{\beta+\vert \phi \vert_{\h}^2(p)}.
\label{firstderitildefuni}
\end{align}
Moreover, $i\partial \bar{\partial} \tilde{f} (p) \leq 0$, i.e.,
\begin{align}
0&\geq (\beta+\vert \phi \vert_{\h}^2(p)) i\partial \bar{\partial} f(p) + i\partial f (p) \wedge \bar{\partial} \vert \phi \vert_{\h}^2 (p) + i\partial \vert \phi \vert_{\h}^2 (p) \wedge \bar{\partial} f(p) \nonumber \\
&+ f(p)(-i\Theta_{\h}(p) \vert \phi \vert_{\h}^2(p)+ i\nabla_{\h}^{1,0} \phi (p) \wedge \nabla^{0,1} \phi^{\dag_{\h}}(p)) \nonumber \\
\Rightarrow 0 &\geq (\beta+\vert \phi \vert_{\h}^2(p)) i\partial \bar{\partial} f(p) - \frac{2i f (p) \vert \phi \vert_{\h}^2(p) \nabla^{1,0}_{\h} \phi (p)\wedge \nabla^{0,1} \phi^{\dag_{\h}} (p)}{\beta+\vert \phi \vert_{\h}^2(p)}\nonumber \\
&+ f(p)(-i\Theta_{\h} \vert \phi \vert_{\h}^2(p)+ i\nabla_{\h}^{1,0} \phi (p) \wedge \nabla^{0,1} \phi^{\dag_{\h}}(p)) \nonumber \\
\Rightarrow 0 &\geq i\partial \bar{\partial}f(p) -\frac{f(p)i\Theta_h(p) \vert \phi \vert_{\h}^2(p)}{\beta+\vert \phi \vert_{\h}^2(p)} + if(p)\nabla_{\h}^{1,0} \phi (p) \wedge \nabla^{0,1} \phi^{\dag_{\h}}(p) \frac{\beta- \vert \phi \vert_{\h}^2(p)}{(\beta+\vert \phi \vert_{\h}^2(p))^2} \nonumber \\
&\geq  i\partial \bar{\partial}f(p) -\frac{f(p)(\avs i\Theta_{\h_2} + (1-\avs)i\Theta_{\h_1})(p) \vert \phi \vert_{\h}^2(p)}{\beta+\vert \phi \vert_{\h}^2(p)}  \nonumber \\
&+ if(p)\nabla_{\h}^{1,0} \phi (p) \wedge \nabla^{0,1} \phi^{\dag_{\h}}(p) \frac{\beta- \vert \phi \vert_{\h}^2(p)}{(\beta+\vert \phi \vert_{\h}^2(p))^2},
\label{secondderitildefuniinter}
\end{align}
where we used \ref{curvatureoffrakhfinal} in the last inequality. Expanding further, we get the following.
\begin{align}
0 &\geq i\partial \bar{\partial}f(p) -\frac{f(p)(\avs i\Theta_{\h_2} + (1-\avs)i\Theta_{\h_1})(p) \vert \phi \vert_{\h}^2(p)}{\beta+\vert \phi \vert_{\h}^2(p)}  \nonumber \\
&+ if(p)\nabla_{\h}^{1,0} \phi (p) \wedge \nabla^{0,1} \phi^{\dag_{\h}}(p) \frac{\beta- \vert \phi \vert_{\h}^2(p)}{(\beta+\vert \phi \vert_{\h}^2(p))^2} \nonumber \\
\Rightarrow 0 &\geq i\partial \bar{\partial}f(p) -\frac{f(p)\left(\avs \frac{1-\vert \phi \vert_{\h_2}^2}{(I)_2(II)_2} + (1-\avs)\frac{1-\vert \phi \vert_{\h_1}^2}{(I)_1(II)_1}\right)(p) \vert \phi \vert_{\h}^2(p)\mu u^{1-t_0}(p)\omega_{\Sigma}(p)}{\beta+\vert \phi \vert_{\h}^2(p)}  \nonumber \\
&+ \frac{if(p)}{\beta+\vert \phi \vert_{\h}^2(p)} \Bigg ( \frac{\nabla_{\h}^{1,0} \phi (p) \wedge \nabla^{0,1} \phi^{\dag_{\h}}(p)(\beta- \vert \phi \vert_{\h}^2(p))}{\beta+\vert \phi \vert_{\h}^2(p)}\nonumber\\  &-\left(\avs \frac{(1-\vert \phi \vert_{\h_2}^2)\nabla_2^{1,0} \phi \wedge \nabla^{0,1} \phi^{\h_2}}{(I)_2(II)_2} + (1-\avs)\frac{(1-\vert \phi \vert_{\h_1}^2)\nabla_1^{1,0} \phi \wedge \nabla^{0,1} \phi^{\h_1}}{(I)_1(II)_1}\right)(p) t_0 \vert \phi \vert_{\h}^2(p) \Bigg ) .
\label{secondderitildefuni}
\end{align}
\indent Let 
$$E=\frac{\frac{\partial \vert \phi \vert_{\h}^2\bar{\partial} \vert \phi \vert_{\h_s}^2}{\beta+\vert \phi \vert_{\h}^2}+\frac{\partial \vert \phi \vert_{\h_s}^2\bar{\partial} \vert \phi \vert_{\h}^2}{\beta+\vert \phi \vert_{\h}^2}}{\nabla_{\h_s}^{1,0} \phi\wedge \nabla_{\h_s}^{0,1} \phi^{\dag_{\h_s}}}.$$
 Before substituting \ref{FTCsecond} in \ref{secondderitildefuni} we evaluate it at $p$ and simplify it using \ref{firstderitildefuni}. 
\begin{align}
i\partial \bar{\partial}f(p) &\geq \displaystyle \int_0 ^1 ds \Bigg ( f \vert \phi \vert_{\h_s}^2 (p) J_s(p) \nonumber \\
&+  \frac{it_0(1-\vert \phi \vert_{\h_s}^2)(p)}{(I)_s(p)(II)_s(p)} f(p) \nabla_{\h_s} ^{1,0} \phi (p) \wedge \nabla^{0,1}\phi^{\dag_{\h_s}}(p)\left(-1 +E  \right )\Bigg ) \nonumber \\
&\geq  \displaystyle \int_0 ^1 ds \Bigg ( f \vert \phi \vert_{\h_s}^2 (p)\omega_{\Sigma}(p) \frac{\mu u^{1-t_0}(p)}{(I)_s(p)(II)_s(p)} \nonumber \\
&+  \frac{it_0f(p)\nabla_{\h_s} ^{1,0} \phi (p) \wedge \nabla^{0,1}\phi^{\dag_{\h_s}}(p)}{(I)_s(p) (II)_s(p)}\left(-(1-\vert \phi \vert_{\h_s}^2)(p) +\vert \phi \vert_{\h_s}^2(p) + (1-\vert \phi \vert_{\h_s}^2)E\right )\Bigg )
\label{FTCsimplified}
\end{align}
Now we substitute \ref{FTCsimplified} in \ref{secondderitildefuni} to get the following equation. 
\begin{align}
0&\geq \displaystyle \int_0 ^1 ds \Bigg ( f \vert \phi \vert_{\h_s}^2 (p)\omega_{\Sigma}(p) \frac{\mu u^{1-t_0}(p)}{(I)_s(p)(II)_s(p)} \nonumber \\
&-  \frac{it_0f(p)\nabla_{\h_s} ^{1,0} \phi (p) \wedge \nabla^{0,1}\phi^{\dag_{\h_s}}(p)}{(I)_s(p) (II)_s(p)}\left(1+\vert E\vert \right )\Bigg ) \nonumber \\
&- \frac{f(p)\omega_{\Sigma}(p)\left(\avs \frac{1-\vert \phi \vert_{\h_2}^2}{(I)_2(II)_2} + (1-\avs)\frac{1-\vert \phi \vert_{\h_1}^2}{(I)_1(II)_1}\right)(p) \vert \phi \vert_{\h}^2(p)\mu u^{1-t_0}(p)}{\beta+\vert \phi \vert_{\h}^2(p)}  \nonumber \\
&+ \frac{if(p)}{\beta+\vert \phi \vert_{\h}^2(p)} \Bigg ( \frac{\nabla_{\h}^{1,0} \phi (p) \wedge \nabla^{0,1} \phi^{\dag_{\h}}(p)(\beta- \vert \phi \vert_{\h}^2(p))}{\beta+\vert \phi \vert_{\h}^2(p)}\nonumber\\  &-\left(\avs \frac{(1-\vert \phi \vert_{\h_2}^2)\nabla_2^{1,0} \phi \wedge \nabla^{0,1} \phi^{\h_2}}{(I)_2(II)_2} + (1-\avs)\frac{(1-\vert \phi \vert_{\h_1}^2)\nabla_1^{1,0} \phi \wedge \nabla^{0,1} \phi^{\h_1}}{(I)_1(II)_1}\right)(p) \vert \phi \vert_{\h}^2(p) \Bigg ).
\label{aftersubstitution}
\end{align}
We simplify \ref{aftersubstitution} further to get the following inequality at $p$. (We suppress the dependence on $p$ from now onwards.)
\begin{align}
0&\geq \displaystyle \int_0 ^1 ds \Bigg (\vert \phi \vert_{\h_s}^2 f\omega_{\Sigma}\mu u^{1-t_0} \left [ \frac{1 }{(I)_s(II)_s} - \frac{  \avs \frac{1-\vert \phi \vert_{\h_2}^2}{(I)_2(II)_2} + (1-\avs)\frac{1-\vert \phi \vert_{\h_1}^2}{(I)_1(II)_1}}{\beta+\vert \phi \vert_{\h}^2} \right]  \nonumber \\
&-  \frac{it_0f\nabla_{\h_s} ^{1,0} \phi  \wedge \nabla^{0,1}\phi^{\dag_{\h_s}}}{(I)_s (II)_s}\left(1+\vert E \vert\right )\Bigg ) \nonumber \\
&+ \frac{if}{\beta+\vert \phi \vert_{\h}^2} \Bigg ( \frac{\nabla_{\h}^{1,0} \phi  \wedge \nabla^{0,1} \phi^{\dag_{\h}}(\beta- \vert \phi \vert_{\h}^2)}{\beta+\vert \phi \vert_{\h}^2}\nonumber\\  
&-\left(\avs \frac{(1-\vert \phi \vert_{\h_2}^2)\nabla_2^{1,0} \phi \wedge \nabla^{0,1} \phi^{\h_2}}{(I)_2(II)_2} + (1-\avs)\frac{(1-\vert \phi \vert_{\h_1}^2)\nabla_1^{1,0} \phi \wedge \nabla^{0,1} \phi^{\h_1}}{(I)_1(II)_1}\right) \vert \phi \vert_{\h}^2 \Bigg ).
\label{simplifyaftersubstitution}
\end{align}
\indent At this point we choose $\beta>1$ to be so large that 
\begin{align}
&\frac{1}{(I)_s(II)_s} - \frac{\avs \frac{1-\vert \phi \vert_{\h_2}^2}{(I)_2(II)_2} + (1-\avs)\frac{1-\vert \phi \vert_{\h_1}^2}{(I)_1(II)_1}}{\beta+\vert \phi \vert_{\h}^2} \nonumber \\
&= \avs \left ( \frac{1}{(I)_s(II)_s} - \frac{1-\vert \phi \vert_{\h_2}^2}{(I)_2(II)_2 (\beta+\vert \phi \vert_{\h}^2)} \right) +(1-\avs) \left (\frac{1}{(I)_s(II)_s}-\frac{1-\vert \phi \vert_{\h_1}^2}{(I)_1(II)_1 (\beta+\vert \phi \vert_{\h}^2)} \right ) \nonumber \\
&> \frac{1}{10(2r_2)(2r_2+1)}.
\label{choiceofbeta}
\end{align}
\indent Using \ref{choiceofbeta} in \ref{simplifyaftersubstitution} we get the following inequality at $p$.
\begin{align}
0&\geq \displaystyle \int_0 ^1 ds \Bigg ( \frac{f\vert \phi \vert_{\h_s}^2 \mu u^{1-t_0}}{10(2r_2)(2r_2+1)} -  \frac{it_0f\nabla_{\h_s} ^{1,0} \phi  \wedge \nabla^{0,1}\phi^{\dag_{\h_s}}}{(I)_s (II)_s}\left(1+\vert E \vert \right )\Bigg ) \nonumber \\
&+ \frac{if}{\beta+\vert \phi \vert_{\h}^2} \Bigg ( \frac{\nabla_{\h}^{1,0} \phi  \wedge \nabla^{0,1} \phi^{\dag_{\h}}(\beta- \vert \phi \vert_{\h}^2)}{\beta+\vert \phi \vert_{\h}^2}\nonumber\\  
&-\left(\avs \frac{(1-\vert \phi \vert_{\h_2}^2)\nabla_2^{1,0} \phi \wedge \nabla^{0,1} \phi^{\h_2}}{(I)_2(II)_2} + (1-\avs)\frac{(1-\vert \phi \vert_{\h_1}^2)\nabla_1^{1,0} \phi \wedge \nabla^{0,1} \phi^{\h_1}}{(I)_1(II)_1}\right) \vert \phi \vert_{\h}^2 \Bigg ).
\label{almostthereuniq}
\end{align}
Now we relate $\nabla_{\h_1}\phi$  to $\nabla_{\h}\phi$ using \ref{firstderitildefuni}.
\begin{align}
\nabla_{\h_1}^{1,0} \phi \wedge \nabla^{0,1} \phi^{\dag_{\h_1}} &=\left (\nabla_{\h}^{1,0}\phi -\partial \ln \left (\displaystyle \int_0^1 e^{-sf}ds \right )\phi \right )\wedge  \nabla^{0,1} \left ( \phi^{\dag_{\h}} \left(\displaystyle \int_0^1 e^{-sf} ds \right)^{-1}\right )\nonumber \\
&=  \frac{\left (\nabla_{\h}^{1,0}\phi +\avs \partial f \phi \right )\wedge \left (\nabla^{0,1}\phi^{\dag_{\h}} +\avs \bar{\partial} f \phi^{\dag_{h}} \right )}{\displaystyle \int_0^1 e^{-sf} ds}\nonumber \\
&= \frac{\left (\nabla_{\h}^{1,0}\phi -\avs \frac{f\phi\partial \vert \phi \vert_{\h}^2}{\beta +\vert \phi \vert_{\h}^2} \right )\wedge \left (\nabla^{0,1}\phi^{\dag_{\h}} -\avs  \frac{f\phi^{\dag_{h}}\bar{\partial}\vert \phi \vert_{\h}^2  }{\beta +\vert \phi \vert_{\h}^2} \right )}{\displaystyle \int_0^1 e^{-sf} ds}\nonumber \\
&=\frac{ \nabla_{\h}^{1,0}\phi \wedge \nabla^{0,1}\phi^{\dag_{\h}}}{\displaystyle \int_0^1 e^{-sf} ds} \left (1- \frac{\avs f \vert \phi \vert_{\h}^2}{\beta+\vert \phi \vert_{\h}^2} \right ) ^2 \nonumber \\
&= \frac{\nabla_{\h}^{1,0}\phi \wedge \nabla^{0,1}\phi^{\dag_{\h}}}{\displaystyle \int_0^1 e^{-sf} ds} \left (1+ \frac{ \int_0^1 (-tf)e^{-tf}dt \vert \phi \vert_{\h_1}^2}{\beta+\vert \phi \vert_{\h}^2} \right ) ^2.
\label{nablarelations1}
\end{align}
Note that whenever $x<C$, $xe^x$ is bounded above and below. Therefore, since $-tf \leq tC_1$ (where $C_1$ is the constant appearing in \ref{loweronf}), 
\begin{gather}
\displaystyle \vert \int_0^1 (-tf)e^{-tf}dt \vert \leq K,
\end{gather}
where $K$ depends only on $r_2, \mu, h_0$. This means that 
\begin{align}
\vert \phi \vert_{\h}^2 \nabla_{\h_1}^{1,0} \phi \wedge \nabla^{0,1} \phi^{\dag_{\h_1}} &\leq \vert \phi \vert_{\h} ^2 \frac{\nabla_{\h}^{1,0}\phi \wedge \nabla^{0,1}\phi^{\dag_{\h}}}{\displaystyle \int_0 ^1 e^{-sf}ds} \left (1+ \frac{K}{\beta} \right ) ^2 \nonumber \\
&\leq  \vert \phi \vert_{\h_1}^2\nabla_{\h}^{1,0}\phi \wedge \nabla^{0,1}\phi^{\dag_{\h}} \left (1+ \frac{K}{\beta} \right ) ^2 \nonumber \\
&\leq  \nabla_{\h}^{1,0}\phi \wedge \nabla^{0,1}\phi^{\dag_{\h}} \left (1+ \frac{K}{\beta} \right ) ^2.
\label{asmallinequality}
\end{align} 
Likewise, for $\nabla_{\h_2} \phi$ we have the following relation.
\begin{align}
\nabla_{\h_2}^{1,0} \phi \wedge \nabla^{0,1} \phi^{\dag_{\h_2}} &=\left (\nabla_{\h}^{1,0}\phi -\partial \ln \left (\displaystyle \int_0^1 e^{-sf}ds \right )\phi -\partial f \phi \right )\wedge  \nabla^{0,1} \left ( \phi^{\dag_{\h}} e^{-f}\left(\displaystyle \int_0^1 e^{-sf} ds \right)^{-1}\right )\nonumber \\
&=  e^{-f}\frac{\left (\nabla_{\h}^{1,0}\phi +(\avs-1) \partial f \phi \right )\wedge \left (\nabla^{0,1}\phi^{\dag_{\h}} +(\avs-1) \bar{\partial} f \phi^{\dag_{h}} \right )}{\displaystyle \int_0^1 e^{-sf} ds}\nonumber \\
&= e^{-f}\frac{\left (\nabla_{\h}^{1,0}\phi -(\avs-1) \frac{f\phi\partial \vert \phi \vert_{\h}^2}{\beta +\vert \phi \vert_{\h}^2} \right )\wedge \left (\nabla^{0,1}\phi^{\dag_{\h}} -(\avs-1)  \frac{f\phi^{\dag_{h}}\bar{\partial}\vert \phi \vert_{\h}^2  }{\beta +\vert \phi \vert_{\h}^2} \right )}{\displaystyle \int_0^1 e^{-sf} ds}\nonumber \\
&=e^{-f}\frac{ \nabla_{\h}^{1,0}\phi \wedge \nabla^{0,1}\phi^{\dag_{\h}}}{\displaystyle \int_0^1 e^{-sf} ds} \left (1- \frac{(\avs-1) f \vert \phi \vert_{\h}^2}{\beta+\vert \phi \vert_{\h}^2} \right ) ^2 \nonumber \\
&= e^{-f}\frac{\nabla_{\h}^{1,0}\phi \wedge \nabla^{0,1}\phi^{\dag_{\h}}}{\displaystyle \int_0^1 e^{-sf} ds} \left (1+ \frac{ \int_0^1 (1-t)fe^{(1-t)f}dt \vert \phi \vert_{\h_2}^2}{\beta+\vert \phi \vert_{\h}^2} \right ) ^2.
\label{nablarelations2}
\end{align}
By the same reasoning as before, 
\begin{align}
\vert \phi \vert_{\h}^2 \nabla_{\h_2}^{1,0} \phi \wedge \nabla^{0,1} \phi^{\dag_{\h_2}} &\leq \vert \phi \vert_{\h} ^2 e^{-f}\frac{\nabla_{\h}^{1,0}\phi \wedge \nabla^{0,1}\phi^{\dag_{\h}}}{\displaystyle \int_0 ^1 e^{-sf}ds} \left (1+ \frac{K}{\beta} \right ) ^2 \nonumber \\
&\leq  \vert \phi \vert_{\h_2}^2\nabla_{\h}^{1,0}\phi \wedge \nabla^{0,1}\phi^{\dag_{\h}} \left (1+ \frac{K}{\beta} \right ) ^2 \nonumber \\
&\leq  \nabla_{\h}^{1,0}\phi \wedge \nabla^{0,1}\phi^{\dag_{\h}} \left (1+ \frac{K}{\beta} \right ) ^2.
\label{bsmallinequality}
\end{align}
Putting \ref{asmallinequality}, \ref{bsmallinequality}, and \ref{almostthereuniq} we get the following inequality at $p$.
\begin{align}
0&\geq \displaystyle \int_0 ^1 ds \Bigg ( \frac{f\vert \phi \vert_{\h_s}^2 \mu u^{1-t_0}}{10(2r_2)(2r_2+1)} -  \frac{it_0f\nabla_{\h_s} ^{1,0} \phi  \wedge \nabla^{0,1}\phi^{\dag_{\h_s}}}{(I)_s (II)_s}\left(1+\vert E \vert \right )\Bigg ) \nonumber \\
&+ \frac{if \nabla_{\h}^{1,0} \phi  \wedge \nabla^{0,1} \phi^{\dag_{\h}}}{\beta+\vert \phi \vert_{\h}^2} \Bigg ( \frac{\beta- \vert \phi \vert_{\h}^2}{\beta+\vert \phi \vert_{\h}^2}-\left(1+\frac{K}{\beta} \right)^2\left( \frac{\avs}{(I)_2(II)_2} + \frac{(1-\avs)}{(I)_1(II)_1}\right) \Bigg ).
\label{furtherthaneverbeforeuniq}
\end{align}
For a large enough $\beta$ (depending only on $r_2, \mu, h_0$) we see that
\begin{align}
0&\geq \displaystyle \int_0 ^1 ds \Bigg ( \frac{f\vert \phi \vert_{\h_s}^2 \mu u^{1-t_0}}{10(2r_2)(2r_2+1)} -  t_0\frac{if\nabla_{\h_s} ^{1,0} \phi  \wedge \nabla^{0,1}\phi^{\dag_{\h_s}}(1+\vert E \vert)}{2r_2(2r_2+1)}  \Bigg )\nonumber\\
&+ \frac{if \nabla_{\h}^{1,0} \phi  \wedge \nabla^{0,1} \phi^{\dag_{\h}}}{\beta+\vert \phi \vert_{\h}^2} \left (1-\frac{3}{2r_2(2r_2+1)} \right ) \nonumber \\
&\geq \displaystyle \int_0 ^1 ds \Bigg ( \frac{f\vert \phi \vert_{\h_s}^2 \mu u^{1-t_0}}{10(2r_2)(2r_2+1)} -  t_0\frac{if\nabla_{\h_s} ^{1,0} \phi  \wedge \nabla^{0,1}\phi^{\dag_{\h_s}}(1+\vert E \vert)}{2r_2(2r_2+1)}  \Bigg )+ \frac{if \nabla_{\h}^{1,0} \phi  \wedge \nabla^{0,1} \phi^{\dag_{\h}}}{2\beta}.
\label{evenfurtherthaneverbeforeuniq}
\end{align}
The following equations describe the relationship between $\nabla_{\h} \phi$ and $\nabla_{\h_s} \phi$.
\begin{align}
\nabla_{\h_s} ^{1,0} \phi \wedge \nabla ^{0,1} \phi^{\dag_{\h_s}} &= \left(\nabla_{\h}^{1,0} \phi -\partial \ln \left ( \displaystyle \int_0^1 e^{-tf}dt\right) -s\partial f \phi  \right)\wedge \nabla^{0,1} \left( \phi^{\dag}_{\h} e^{-sf} \left (\displaystyle \int_0 ^1 e^{-tf}dt \right) ^{-1}\right)\nonumber \\
&=\frac{e^{-sf}}{\displaystyle \int_0 ^1 e^{-tf}dt}\left(\nabla_{\h}^{1,0} \phi +(\avs-s)\partial f \phi  \right)\wedge \left(\nabla^{0,1} \phi^{\dag_{\h}} +(\avs-s)\bar{\partial} f \phi^{\dag_{\h}}  \right)\nonumber \\
&=\frac{e^{-sf}}{\displaystyle \int_0 ^1 e^{-tf}dt}\nabla_{\h}^{1,0} \phi\wedge \nabla^{0,1} \phi^{\dag_{\h}} \left (1+\frac{(s-\avs)f \vert \phi \vert_{\h}^2}{\beta+\vert \phi \vert_{\h}^2} \right )^2
\label{relationhhs}
\end{align}
Note that
\begin{align}
\left \vert \frac{(s-\avs)f \vert \phi \vert_{\h}^2}{\beta+\vert \phi \vert_{\h}^2} \right \vert &\leq \frac{2}{\beta} \vert f \vert \vert \phi \vert_{\h_1}^2 \displaystyle \int_0^1 e^{-tf} dt \nonumber \\
&\leq \frac{2}{\beta} \vert 1-e^{-f}\vert  \nonumber \\
&\leq \frac{2(1+e^{C_1})}{\beta},
\label{upperonaterm}
\end{align}
where we used estimate \ref{loweronf}. Using \ref{evenfurtherthaneverbeforeuniq}, \ref{relationhhs}, and \ref{upperonaterm} we see that
\begin{align}
0&\geq \frac{f\vert \phi \vert_{\h}^2 \mu u^{1-t_0}}{10(2r_2)(2r_2+1)}+if  \nabla_{\h}^{1,0} \phi  \wedge \nabla^{0,1} \phi^{\dag_{\h}}\left (\frac{1}{2\beta}-\frac{t_0\left(1+\displaystyle \int_0^1\vert E \vert \frac{e^{-sf}}{\int_0^1 e^{-tf} dt}ds\right)}{2r_2(2r_2+1)} \left (1+ \frac{2(1+e^{C_1})}{\beta}\right)^2 \right ).
\label{almostsemifinalunique}
\end{align}
Now we estimate $\int_0^1 \vert E \vert \frac{e^{-sf}}{\int_0^1 e^{-tf} dt} ds$. 
\begin{gather}
\displaystyle \int_0^1 \vert E \vert \frac{e^{-sf}}{\int_0^1 e^{-tf} dt} ds \leq \frac{2}{\beta+\vert \phi \vert_{\h}^2} \int_0^1 \frac{e^{-sf}}{\int_0^1 e^{-tf} dt} \vert \phi \vert_{\h} \vert \phi \vert_{\h_s} \frac{\vert \nabla^{1,0}_{\h} \phi \vert_{\h}}{\vert \nabla ^{1,0}_{\h_s} \phi \vert_{\h_s}}  ds \nonumber \\
\leq \frac{2}{\beta}\int_0^1 \frac{e^{-sf}}{\int_0^1 e^{-tf} dt} \frac{\vert \nabla^{1,0}_{\h} \phi \vert}{\vert \nabla^{1,0}_{\h}\phi+\phi\partial f (s-\langle s\rangle) \vert_{\h} \frac{\sqrt{e^{-sf}}}{\sqrt{\int_0^1 e^{-tf} dt}}} ds.
\label{estimateforerrorterm}
\end{gather}
Using \ref{firstderitildefuni} we get the following.
\begin{gather}
\displaystyle \int_0^1 \vert E \vert \frac{e^{-sf}}{\int_0^1 e^{-tf} dt} ds \leq \frac{2}{\beta}\int_0^1 \frac{e^{-sf}}{\int_0^1 e^{-tf} dt} \frac{1}{\vert 1-\frac{\vert \phi \vert_{\h}^2 \vert s-\langle s \rangle \vert}{\beta+\vert \phi \vert_{\h}^2} \vert \frac{\sqrt{e^{-sf}}}{\sqrt{\int_0^1 e^{-tf} dt}}}ds \nonumber \\
\leq 1,
\label{almostthereestimateforerrorterm}
\end{gather}
for large enough $\beta$. Therefore,
\begin{align}
0&\geq \frac{f\vert \phi \vert_{\h}^2 \mu u^{1-t_0}}{10(2r_2)(2r_2+1)}+\frac{if  \nabla_{\h}^{1,0} \phi  \wedge \nabla^{0,1} \phi^{\dag_{\h}}}{4\beta}
\label{almostfinalunique}
\end{align}
for sufficiently small $t_0$ (depending on $\beta, r_2, \mu, h_0$). As before, since the line bundle is of degree $1$, either $\phi(p)\neq 0$ or $\vert \nabla \phi \vert (p) \neq 0$. This implies that $f(p)\leq 0$ which is a contradiction. Hence $f\equiv 0$ showing uniqueness for small $t_0$.
\end{proof}

\indent Now we complete step 4 and hence the proof of uniqueness. 
\begin{lemma}
If there exists a $t_0 \in [0,1]$ such that $h_{t_0}=\tilde{h}_{t_0}$ then $h_1=h_2$.
\end{lemma}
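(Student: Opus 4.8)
The plan is to propagate the coincidence at $t_0$ along the entire common domain of the two continuity paths by an open--closed argument, and then to read off the conclusion at the endpoint $t=1$. Recall that the forward path $h_t$ solving \ref{methodofcontinuityagain} exists for all $t\in[0,1]$ (since $T=[0,1]$), while the backward path $\tilde h_t$ solving \ref{backwardspath} exists for all $t\in\tilde T\supseteq(0,1]$; by Lemma \ref{uniquenessforsmallt0lem} the intersection value satisfies $t_0\in(0,1]$. Hence both paths are defined on the connected interval $[t_0,1]$. The crucial structural observation is that \ref{methodofcontinuityagain} and \ref{backwardspath} are \emph{the same} equation at each fixed $t$; writing it as $L(\psi,t)=0$ with $h=h_0e^{-\psi}$, both $t\mapsto\psi_t$ and $t\mapsto\tilde\psi_t$ are solution branches of this single family. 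I would then define the coincidence set
$$S=\{t\in[t_0,1]\ :\ h_t=\tilde h_t\},$$
which is nonempty since $t_0\in S$ by hypothesis, and show it is both open and closed in $[t_0,1]$.

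For openness, fix $t_*\in S$ and set $h_{t_*}=\tilde h_{t_*}=h_0e^{-\psi_*}$. The proof of Lemma \ref{openness} shows that the linearisation $D_\psi L_{(\psi_*,t_*)}$ is a self-adjoint elliptic operator whose adjoint has trivial kernel (the maximum-principle argument there applies verbatim, since it depends only on the equation, which is identical for the two paths). Thus $D_\psi L_{(\psi_*,t_*)}$ is an isomorphism of the relevant Banach spaces, and the implicit function theorem produces a \emph{unique} local solution branch $t\mapsto\psi(t)$ through $(\psi_*,t_*)$. As both $\psi_t$ and $\tilde\psi_t$ are solution branches agreeing at $t_*$, local uniqueness forces them to coincide on a neighbourhood of $t_*$ in $[t_0,1]$, so $S$ is open.

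For closedness, I would invoke the \emph{a priori} estimates from the existence proof: Lemma \ref{usefulobs} gives $|\phi|_h^2\le1$, and the chain $\Vert\psi\Vert_{C^0}\le C\Rightarrow\Vert\psi\Vert_{C^1}\le C\Rightarrow\Vert\psi\Vert_{C^{2,\beta}}\le C$ yields uniform control, so that $t\mapsto\psi_t$ and $t\mapsto\tilde\psi_t$ are continuous in $C^{2,\alpha}$. Then, if $t_n\to t_*$ with $t_n\in S$, passing to the limit in $\psi_{t_n}=\tilde\psi_{t_n}$ gives $\psi_{t_*}=\tilde\psi_{t_*}$, so $t_*\in S$ and $S$ is closed. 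Since $[t_0,1]$ is connected and $S$ is nonempty, open, and closed, we conclude $S=[t_0,1]$; in particular $1\in S$, which reads $h_1=\tilde h_1=h_2$, as desired. The only genuinely delicate step is openness, and it rests entirely on the invertibility of $D_\psi L$ already secured in Lemma \ref{openness}; once that is in hand, the remainder is the standard soft packaging of the continuity method.
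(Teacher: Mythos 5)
Your proposal is correct and follows essentially the same route as the paper: the paper's proof is precisely the open--closed argument on the coincidence set $S=\{t: h_t=\tilde h_t\}$, with non-emptiness from the hypothesis at $t_0$, openness from the invertibility of the linearisation in Lemma \ref{openness} together with the implicit/inverse function theorem (local uniqueness of the solution branch), and closedness from the \emph{a priori} estimates of the existence proof. Your write-up merely spells out details (continuity of the branches, passing to the limit) that the paper leaves implicit.
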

\begin{proof}
Let $S \subset [\delta_0,1]$ be the set of all $t$ such that $h_t=\tilde{h}_t$. Then $S$ satisfies the following.
\begin{enumerate}
\item It is non-empty : $\delta_0 \in S$.
\item It is open : The proof of openness (Lemma \ref{openness}) and the inverse function theorem of Banach manifolds shows that locally the solution is unique and hence $S$ is open.
\item It is closed : The \emph{a priori} estimates in the proof of existence show that $S$ is closed.
\end{enumerate}
Therefore $S=[\delta_0,1]$ as desired.
\end{proof}

\end{document}